\definecolor{utahRed}{RGB}{190, 0, 0}
\newtheorem{theorem}{Theorem}[section]
\newtheorem{lemma}[theorem]{Lemma}
\newtheorem{proposition}[theorem]{Proposition}
\newtheorem{thmx}{Theorem}
\numberwithin{equation}{section}
\theoremstyle{definition}
\newtheorem{definition}[theorem]{Definition}
\newtheorem{notation}[theorem]{Notation}
\newtheorem{example}[theorem]{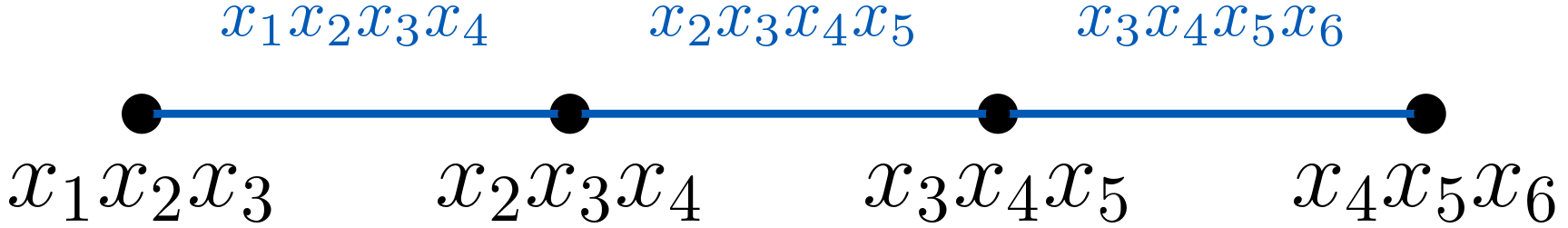}
\newtheorem{question}[theorem]{Question}
\newtheorem*{ack}{Acknowledgments}
\newcommand{\mc}{\mathcal}
\newcommand{\mingens}{\operatorname{mingens}}
\newcommand{\lcm}{\operatorname{lcm}}
\newcommand{\starnbd}{\operatorname{star}}
\newcommand{\cone}{\operatorname{cone}}
\newcommand{\Scarf}{\operatorname{Scarf}}
\newcommand{\taylor}{\operatorname{Taylor}}
\title{Scarf complexes of connected and path ideals}
\author[Chau]{Trung Chau}
\address{Chennai Mathematical Institute, Siruseri, Tamil Nadu 603103. India}
\email{chauchitrung1996@gmail.com}
\author[Sheng]{Richie Sheng}
\address{Department of Mathematics, University of Utah, Salt Lake City, UT, USA}
\email{u1415944@utah.edu}
\author[Tribone]{Tim Tribone}
\address{Department of Mathematics, University of Utah, Salt Lake City, UT, USA}
\email{tim.tribone@utah.edu}
\urladdr{\href{https://timtribone.com/}{timtribone.com}}
\author[Wooton]{Deborah Wooton}
\address{Department of Mathematics, University of Utah, Salt Lake City, UT, USA}
\email{deborah.wooton@utah.edu}
\subjclass{13F55, 13D02}
\keywords{Scarf complex, monomial ideal, connected ideal, path ideal, free resolution}
\begin{document}

\begin{abstract}
    The $t$-connected ideal of a graph $G$ is generated by all connected induced subgraphs of $G$ with $t$ vertices. When $t=2$, this coincides with the usual edge ideal of the graph. Following the work of Faridi et al., we give a classification of the graphs whose $t$-connected ideals are minimally resolved by their Scarf complex. We also consider the $t$-path ideal of a graph $G$ which is the ideal generated by all paths of length $t$ in $G$. In this case, we are able to give a classification of the same type for paths of length $t=4$.
\end{abstract}

\maketitle

\section{Introduction}

    Let $S = \Bbbk[x_1,\dots,x_d]$ be a polynomial ring over a field $\Bbbk$ and $I$ a monomial ideal of $S$. This work addresses the classical problem in commutative algebra of understanding the minimal free resolution of the $S$-module $S/I$. Following the work of many others, we will consider the special case of monomial ideals arising from finite simple graphs \cite{MR4245096,Chau19082025,CK24,MR2457403,MR2875826,faridi2024scarf}.

    The approach we take is possible due to a result of Bayer-Peeva-Sturmfels \cite{bayer1998monomial}, which we recall below. The guiding theme of this approach is to associate to the monomial ideal $I$ a simplicial complex whose homology detects homological properties of $I$. In our setting, the graph defining $I$ and the homology of the associated simplicial complex both contribute to the understanding of the minimal free resolution of $S/I$. Indeed, while our motivation comes from commutative algebra, our results rely primarily on graph-theoretic characterizations made in \cref{sec:forbidden_subgraphs} and basic properties of simplicial homology.

\subsection*{Labeled simplicial complexes}

    Suppose $I$ is minimally generated by monomials $\mingens(I) \coloneqq \{m_1,m_2,\dots,m_q\}$, and let $\Delta$ be a simplicial complex on $\mingens(I)$, i.e. $\Delta$ is a collection of subsets of $\mingens(I)$ which is closed under taking subsets (and contains the empty set). As a convention, we also assume that $\Delta$ contains the singleton sets consisting of each element of $\mingens(I)$. We equip $\Delta$ with the following ``labeling": given a \textit{face} $\sigma \in \Delta$ (which is a subset of $\mingens(I)$ contained in $\Delta$), we label $\sigma$ with the least common multiple of its elements, which we denote by $\lcm(\sigma)$. For our purposes, we will set $\lcm \varnothing = 1$.
    
    From the data of a labeled simplicial complex, one can construct a complex of free $S$-modules denoted $\mc F_\Delta$; this process is called \emph{homogenization}, and we refer the reader to \cite{bayer1998monomial} for the explicit description of $\mc F_\Delta$. The simplicial complex $\Delta$ is said to \textit{support a resolution of} $I$ if the complex $\mc F_\Delta$ is acyclic, that is, if the complex $\mc F_\Delta$, augmented by $S/I$, is a free resolution of $S/I$.
    
    The criterion given by Bayer-Peeva-Sturmfels is as follows: the complex $\Delta$ supports a resolution of $I$ if and only if, for any monomial $m$ in $S$, the subcomplex
        \[\Delta_{\leq m} \coloneqq \bigl\{\sigma \in \Delta : \lcm(\sigma) \big\vert m \bigr\} \subseteq \Delta\]
    is either empty or has trivial reduced homology with coefficients in $\Bbbk$; see \cite[Lemma 2.2]{bayer1998monomial} or \cref{lem:BPS} below. In other words, to check whether $\mc F_\Delta$ is a resolution, we can check  whether a finite number of subcomplexes of $\Delta$ are acyclic over $\Bbbk$; this is often easier than directly computing the homology of $\mc F_\Delta$. 

    The simplest example of a simplicial complex associated to $I$ is the \textit{Taylor complex}, which we denote by $\taylor(I)$. This is the simplicial complex which contains all subsets of $\mingens(I)$, i.e., the power set of $\mingens(I)$. Geometrically, this can be thought of as the full $(q-1)$-dimensional simplex on the vertices $m_1,\dots,m_q$. For any monomial $m$, the subcomplex $\taylor(I)_{\leq m}$ is also a simplex (or is empty) and thus $\mc F_{\taylor(I)}$ is a resolution; this was originally shown by Taylor in \cite{Tay66}, hence the name ``Taylor complex". Since $\taylor(I)$ always supports a resolution of $I$, the resolution $\mc F_{\taylor(I)}$ is commonly referred to as the \textit{Taylor resolution} of $S/I$.
    
    It is worth noting that the Taylor resolution is rarely minimal. Indeed, it is guaranteed to be non-minimal as the number of generators of $I$ grows large compared to the number of variables. One approach to addressing this minimality issue is to consider a subcomplex of $\taylor(I)$ called the Scarf complex, which was introduced by Bayer-Peeva-Sturmfels in \cite{bayer1998monomial}.

\subsection*{The Scarf complex} 

    As above, let $I$ be a monomial ideal of $S$. The \textit{Scarf complex} of $I$, denoted $\Scarf(I)$, is the subset of $\taylor(I)$ obtained by removing all faces with non-unique lcm labels. In other words,
        \[\Scarf(I) \coloneqq \{\sigma \in \taylor(I) : \lcm(\sigma) = \lcm(\tau) \text{ implies } \sigma = \tau\}.\]
    
    The benefits of the Scarf complex are apparent as one starts working with them. For instance, for any monomial ideal $I$,
        \begin{itemize}
            \item $\Scarf(I)$ is a simplicial subcomplex of $\taylor(I)$;
            \item $\mc F_{\Scarf(I)}$ is a minimal complex;
            \item $\mc F_{\Scarf(I)}$ is contained in the minimal free resolution of $S/I$.
        \end{itemize}
    Thus, whenever $\Scarf(I)$ supports a resolution of $I$, $\mc F_{\Scarf(I)}$ is necessarily the minimal free resolution of $S/I$.
    
    On the other hand, there is one clear drawback to the Scarf complex: removing non-unique labels may produce homology; that is, the complex $\mc F_{\Scarf(I)}$ is not always a resolution of $S/I$. This leads us to the main question addressed in this article.

    \begin{question}\label{main_question}
        Which monomial ideals $I$ are minimally resolved by the Scarf complex $\mc F_{\Scarf(I)}$?
    \end{question}
    
    Somewhat surprisingly, \textit{most} monomial ideals admit their Scarf complex as their minimal free resolution, emphasizing the ubiquity of the complex; see \cite{bayer1998monomial} for the precise notion of ``most''. However, it is largely unknown exactly which square-free monomial ideals have this property. 
    
    In the case when $I$ is the edge ideal (or a power of an edge ideal) of a graph, a complete answer to \cref{main_question} was given in \cite{faridi2024scarf}. Their ``Beautiful Oberwolfach Theorem'' shows that the graphs whose edge ideals are resolved by the Scarf complex are forests with small diameter; see \cite[Theorem 8.3]{faridi2024scarf} for the full statement or \cref{sec:connected_ideals} for the case of a connected graph. Continuing in this direction, the authors in \cite{Chau19082025} classify the square-free and symbolic powers of edge ideals and cover ideals that admit a Scarf minimal resolution.

    This article addresses \cref{main_question} in the case of a $t$-connected or $t$-path ideal associated to a graph, which we define below. We consider these ideals for $t > 2$, since the $2$-connected and $2$-path ideals of a graph both coincide with the edge ideal. 

    \begin{definition}\label{def:path_conn_intro}
        Let $t\geq 2$ be an integer and $G$ a simple connected graph on vertices $\{x_1,x_2,\dots,x_d\}$.
            \begin{enumerate}[label = (\roman*)] 
                \item \label{def:conn_intro} The \textit{$t$-connected ideal} of $G$, denoted $\mc C_t(G)$, is the square-free monomial ideal of $S$ generated by all products of the form $x_{i_1}x_{i_2}\cdots x_{i_t}$ such that $i_j \neq i_k$ for all $j \neq k$ and the induced subgraph of $G$ generated by $\{x_{i_1},\dots,x_{i_t}\}\subseteq V(G)$ is connected.
    
                \item \label{def:path_intro} The \textit{$t$-path ideal} of $G$, denoted $\mc P_t(G)$, is the square-free monomial ideal of $S$ generated by all products of the form $x_{i_1}x_{i_2}\cdots x_{i_t}$ such that $i_j \neq i_k$ for all $j\neq k$ and the vertices $x_{i_1},x_{i_2},\dots,x_{i_t}$ form a path in $G$.
            \end{enumerate}
    \end{definition}

    Path ideals were first considered by Conca and De Negri \cite{ConcaDeNegri1998} to study determinantal and Pfaffian ideals. They are a popular class of ideals among researchers in commutative algebra (see, e.g., \cite{AlilooeeFaridi2015,AlilooeeFaridi2018,Banerjee2017,Erey2020,KianiMadani2016}), partly because they are a generalization of the classic edge ideals of graphs. Connected ideals are another generalization of edge ideals, but have received less attention compared to path ideals. They arise from the independent complex of a graph, and thus have applications in algebraic topology (see, e.g., \cite{DRSV24,AJM25}). Our first main result, which is an extension of \cite[Theorem 8.3]{faridi2024scarf}, classifies the graphs whose $t$-connected ideals are minimally resolved by their Scarf complex.

    \begin{thmx}[\cref{thm:connected-scarf-iff-short-path}]\label{thm:A} 
        Let $G$ be a connected graph and $t \geq 3$ an integer. Then the ideal $\mc C_t(G)$ is minimally resolved by its Scarf complex if and only if $G$ has at most $t$ vertices, or $G$ is a path on $r$ vertices for some $r \leq 2t$.    
    \end{thmx}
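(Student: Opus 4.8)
The plan is to run everything through the Bayer--Peeva--Sturmfels criterion (\cref{lem:BPS}): the complex $\Scarf(\mc{C}_t(G))$ supports the minimal free resolution of $\mc{C}_t(G)$ if and only if, for every monomial $m$, the subcomplex $\Scarf(\mc{C}_t(G))_{\le m}$ is empty or $\Bbbk$-acyclic. The engine driving all of the reductions is a \emph{restriction lemma}: for an induced subgraph $H = G[W]$ and the monomial $m_W = \prod_{x \in W} x$, every generator of $\mc{C}_t(G)$ dividing $m_W$ is a generator of $\mc{C}_t(H)$ and conversely, and matching $\lcm$-labels forces $\Scarf(\mc{C}_t(G))_{\le m_W} = \Scarf(\mc{C}_t(H))$. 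Consequently, to \emph{rule out} a Scarf resolution for $G$ it suffices to exhibit a single induced subgraph $H$ with $\widetilde H_\ast(\Scarf(\mc{C}_t(H));\Bbbk) \ne 0$.

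For the forward (``if'') direction I would first dispose of the degenerate cases: if $|V(G)| < t$ then $\mc{C}_t(G) = 0$, and if $|V(G)| = t$ then $\mc{C}_t(G)$ is principal, so the claim is immediate in both. The substantive case is $G = P_r$ with $t < r \le 2t$. Writing the generators as the consecutive products $m_i = x_i x_{i+1}\cdots x_{i+t-1}$ for $1 \le i \le n := r-t+1$, the bound $r \le 2t$ forces $n - 1 \le t$, so the supports of any two generators overlap or abut; a short $\lcm$-uniqueness computation then shows that $\Scarf(\mc{C}_t(P_r))$ consists of exactly the vertices together with the consecutive edges $\{m_i, m_{i+1}\}$, i.e.\ it is the path $m_1 - m_2 - \cdots - m_n$. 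Finally I would check the criterion directly: if $m_i$ and $m_j$ (with $i<j$) both divide $m$, then $x_i \cdots x_{j+t-1} \mid m$, which forces all intermediate vertices and edges to appear, so $\Scarf(\mc{C}_t(P_r))_{\le m}$ is always a subpath (an interval) and therefore empty or contractible.

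For the reverse (``only if'') direction I would assume $|V(G)| > t$ and that $G$ is not a path on at most $2t$ vertices, and split into three cases. (1) If $G = P_r$ with $r > 2t$, take $W = \{x_1,\dots,x_{2t+1}\}$; the restriction lemma identifies $\Scarf(\mc{C}_t(G))_{\le m_W}$ with $\Scarf(\mc{C}_t(P_{2t+1}))$, which the same local analysis shows is the cycle $C_{t+2}$ (the consecutive edges now close up via the ``long'' edge $\{m_1, m_{t+2}\}$, with no $2$-faces appearing), so $\widetilde H_1 \ne 0$. (2) If $G$ has a vertex of degree $\ge 3$, grow a connected induced subgraph $H$ on $t+1$ vertices retaining a vertex of degree $\ge 3$ (possible since $t \ge 3$); such $H$ is not a path. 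The decisive observation is that for a connected graph $H$ on $t+1$ vertices the generators of $\mc{C}_t(H)$ are exactly the monomials $M/x_v$ (with $M = \prod_{w} x_w$) indexed by the non-cut vertices $v$, so every face of size $\ge 2$ carries the label $M$; hence $\Scarf(\mc{C}_t(H))$ is a set of isolated points, one per non-cut vertex. Since a connected graph has exactly two non-cut vertices precisely when it is a path, a non-path $H$ has at least three, giving $\widetilde H_0 \ne 0$, and the restriction lemma transfers this obstruction to $G$. (3) If $G$ has maximum degree $\le 2$ but is not a path, then $G$ is a cycle $C_k$ with $k > t$; for $k > 2t$ I restrict to an arc $P_{2t+1}$ as in case (1), and for $t < k \le 2t$ a direct computation shows $\Scarf(\mc{C}_t(C_k))$ is either $k$ isolated points (when $k = t+1$) or the $k$-cycle (when $t+2 \le k \le 2t$), in either case with nonzero reduced homology.

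The main obstacle is the precise determination of these Scarf complexes --- in particular verifying that no higher-dimensional faces survive, so that $\Scarf(\mc{C}_t(P_{2t+1}))$ and $\Scarf(\mc{C}_t(C_k))$ genuinely carry $H_1$ rather than being filled in. The cycles $C_k$ with $t < k \le 2t$ are the most delicate point: unlike the branching case, they contain no non-path connected induced subgraph on $t+1$ vertices, so they cannot be reduced by the restriction lemma and must be treated on their own. Isolating the correct finite list of obstructions and proving that every ``bad'' $G$ exhibits one of them is exactly the graph-theoretic content to be carried out in \cref{sec:forbidden_subgraphs}.
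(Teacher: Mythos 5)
Your overall strategy is essentially the paper's: the induced-subgraph restriction lemma (\cref{lem:Restriction-Scarf}), the explicit determination of the Scarf complexes of paths and cycles (\cref{thm:Scarf_paths_and_cycles}), and the ``$\geq 3$ non-cut vertices'' obstruction on $t+1$ vertices (\cref{lem:scarf-2-gens} together with \cref{thm:three_verticies}). Your ``if'' direction and your cases (1) and (2) are sound. However, case (3) has a genuine hole at $k = 2t+1$. You propose to handle every cycle $C_k$ with $k > 2t$ by restricting to an induced arc $P_{2t+1}$, but $C_{2t+1}$ contains \emph{no} induced $P_{2t+1}$: a connected induced subgraph of a cycle is an arc, and an arc on $j$ vertices of $C_k$ is an induced path only when $j \le k-1$. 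The longest induced path in $C_{2t+1}$ is $P_{2t}$, whose $t$-connected ideal \emph{is} Scarf; indeed every connected proper induced subgraph of $C_{2t+1}$ is a path on at most $2t$ vertices (and every disconnected one is a disjoint union of such), so no restriction argument of the kind you set up can rule out $C_{2t+1}$, and your degree-$\geq 3$ case does not apply since cycles have maximum degree $2$. This is exactly why the paper's \cref{thm:Scarf_paths_and_cycles}\ref{thm:Scarf_paths_and_cycles_2} runs the direct ``hollow polygon'' computation over the whole range $t+2 \le r \le 2t+1$ and invokes restriction only for $r > 2t+1$. The fix is easy: your own lcm analysis extends verbatim to $k = 2t+1$ (the union of any two generator arcs covers at most $2t$ of the $2t+1$ vertices and is in fact a single arc, so for any non-consecutive pair one can insert an intermediate generator with the same lcm), giving the hollow $(2t+1)$-gon and hence nontrivial $\widetilde H_1$; you should simply run the direct computation for $t+2 \le k \le 2t+1$ and use restriction only for $k \ge 2t+2$.

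A smaller issue: in case (2) you assert that a connected graph has exactly two non-cut vertices precisely when it is a path. This is true, but it is not a throwaway fact --- it is precisely \cref{thm:three_verticies} of the paper, proved there by a careful induction --- and in your argument it carries the entire weight of the branching case, since it is what guarantees at least three generators and hence disconnected Scarf complex. As written it needs a proof or a citation rather than an assertion.
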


    We also consider the $t$-path ideals of a graph. In this case, the situation is significantly more complicated. The ideals $\mc C_3(G)$ and $\mc P_3(G)$ coincide, and thus the case $t=4$ is the next unknown case. We are able to give the following classification.

    \begin{thmx}[\cref{thm:P_4_Scarf}]\label{thm:B}
        Let $G$ be a connected graph. Then the ideal $\mc P_4(G)$ is minimally resolved by its Scarf complex if and only if $G$ has at most four vertices or $G$ is isomorphic to one of the graphs listed in \cref{fig:S_graphs}:
        \begin{itemize}
            \item $T_k$ for $k \ge 0$;
            \item $S_k$ for $k \ge 0$;
            \item $S_k^{m,n}$ for $k=3,4$ and $m,n \ge 0$;
            \item $S_k^{m,n,p}$ for $k=5,6$ and $m,n,p \ge 0$.
        \end{itemize}
    \end{thmx}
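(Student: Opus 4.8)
\subsection*{Proof proposal}

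The plan is to use the Bayer--Peeva--Sturmfels criterion (\cref{lem:BPS}) to convert the question into a hereditary statement about induced subgraphs. Since every generator of $\mc P_4(G)$ is squarefree, it suffices to test the criterion on squarefree monomials $m = x_W$ for $W \subseteq V(G)$. The key first observation is a localization lemma: the generators of $\mc P_4(G)$ whose support lies in $W$ are exactly the generators of $\mc P_4(G[W])$, where $G[W]$ is the induced subgraph on $W$. Moreover, if a face $\sigma$ has $\lcm(\sigma) \mid x_W$, then any face $\tau$ with $\lcm(\tau) = \lcm(\sigma)$ also has support in $W$; hence uniqueness of lcm labels among such faces is already detected inside $G[W]$. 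This yields the identification $\Scarf(\mc P_4(G))_{\leq x_W} = \Scarf(\mc P_4(G[W]))$. Consequently, $\mc P_4(G)$ is minimally resolved by its Scarf complex if and only if $\Scarf(\mc P_4(H))$ is empty or $\Bbbk$-acyclic for every induced subgraph $H$ of $G$; the property is hereditary, and both directions reduce to understanding which induced subgraphs force nonzero reduced homology. The base case $|V(G)| \le 4$ is immediate, since there $\mc P_4(G)$ is zero or principal and hence trivially Scarf-resolved.

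For the necessity direction, I would invoke the graph-theoretic classification of \cref{sec:forbidden_subgraphs}: a connected graph on at least five vertices not isomorphic to one of $T_k, S_k, S_k^{m,n}, S_k^{m,n,p}$ must contain some graph from a finite list of minimal forbidden configurations as an induced subgraph. For each such forbidden graph $H$, which is small, I would compute $\mc P_4(H)$, build $\Scarf(\mc P_4(H))$ explicitly, and exhibit a nonzero reduced homology class (the simplest obstruction being a disconnection, so $\tilde H_0 \neq 0$, arising when several $4$-paths collide in their lcm labels). By the localization identity with $W = V(H)$, this homology survives in $\Scarf(\mc P_4(G))_{\leq x_{V(H)}}$, so the criterion fails and $G$ is not resolved by its Scarf complex.

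For the sufficiency direction, I would show that every induced subgraph of a listed graph has empty or acyclic Scarf complex. This requires two ingredients. First, one verifies that induced subgraphs of the listed families are again disjoint unions of listed families or graphs on at most four vertices; disconnected pieces are then handled by the K\"unneth-type formula for the reduced homology of a join, $\tilde H_n(A * B) \cong \bigoplus_{i+j = n-1} \tilde H_i(A) \otimes \tilde H_j(B)$, which forces acyclicity of a disjoint union as soon as one component contributes a contractible Scarf complex. Second, and most importantly, one must prove that for each connected member of the families the Scarf complex is contractible, uniformly in the parameters $k, m, n, p$. I expect to establish this from an explicit combinatorial description of $\Scarf(\mc P_4(H))$, producing either a cone point or a collapsing (e.g.\ a discrete Morse matching with a single critical cell) and arguing by induction on the tail lengths: deleting a terminal vertex of a tail should allow the newly created generators to be matched off, leaving, up to homotopy, the Scarf complex of a smaller listed graph.

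The main obstacle I anticipate is precisely this last step, the uniform acyclicity for the parametrized families $S_k^{m,n}$ and $S_k^{m,n,p}$. Unlike the $t$-connected case of \cref{thm:A}, where the abundance of generators rigidifies the lcm lattice, the sparse generating sets of path ideals make the Scarf complex delicate and sensitive to the local structure near the branch vertices, so the induction on tail lengths must be arranged so that the collapses remain compatible across all induced subgraphs simultaneously. By comparison, verifying the finite list of forbidden graphs and confirming that the classification of \cref{sec:forbidden_subgraphs} is exhaustive is routine, though it must be carried out carefully to rule out every remaining connected graph on five or more vertices.
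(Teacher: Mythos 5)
Your opening reduction is sound: for a squarefree ideal it suffices to test \cref{lem:BPS} on squarefree monomials, and indeed $\Scarf\bigl(\mc P_4(G)\bigr)_{\leq x_W} = \Scarf\bigl(\mc P_4(G[W])\bigr)$ for the induced subgraph $G[W]$ on $W$, so the $\mc P_4$-Scarf property is equivalent to acyclicity (or emptiness) of the Scarf complex of every induced subgraph; this is \cref{lem:Restriction-Scarf} in different clothing, and your join/K\"unneth observation for disconnected subgraphs is also correct. The first genuine gap is in your necessity argument: you cite ``the classification of \cref{sec:forbidden_subgraphs}'' as though it applied to all connected graphs, but \cref{thm:special_trees} classifies \emph{trees} only. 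It cannot produce the family $T_k$, which contains a triangle, and it says nothing about excluding all the other connected graphs containing cycles. That exclusion is a substantial part of the paper's proof: an induction on $\lvert V(G)\rvert$ in which a minimal cycle of $G$ is shown to be an induced $C_3$ --- cycles $C_\ell$ with $\ell \geq 5$ are ruled out by \cref{thm:Scarf_paths_and_cycles} because $\mc P_4(C_\ell) = \mc C_4(C_\ell)$, and $\ell = 4$ is ruled out by a forbidden subgraph from \cref{fig:forbidden_P_4} via \cref{thm:rest_lem_subgraph} --- after which $C_3$ is extended to a connected induced subgraph $G - v$, the inductive hypothesis gives $G - v \cong T_{t-3}$, and the possible attachments of $v$ are analyzed to force $G = T_{t-2}$. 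A finite list of minimal forbidden induced subgraphs valid for \emph{all} connected graphs may well exist, but proving its existence and exhaustiveness is precisely this missing work; it is not contained in \cref{sec:forbidden_subgraphs}.

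The second and larger gap is the one you flag yourself: uniform acyclicity of $\Scarf\bigl(\mc P_4(H)\bigr)$ across the parametrized families, which you ``expect'' to obtain from a cone point or a discrete Morse matching, with no construction given. This is the heart of the theorem, and the paper resolves it with a specific mechanism absent from your sketch: the leaf lemma (\cref{thm:leaf_lemma}). In the setting of \ref{not:adding_leaves}, adding a leaf duplicates the generators through $x$ as $x'n_1,\dots,x'n_q$, and \emph{provided the star neighborhoods $\starnbd(xn_i,\Gamma)$ are pairwise disjoint}, the new Scarf complex $\Gamma'$ is obtained from $\Gamma$ by coning each $\starnbd(xn_j,\Gamma)$ off with apex $x'n_j$; a Mayer--Vietoris argument then transfers acyclicity of every subcomplex $\Gamma_{\leq m}$ (not merely of $\Gamma$ itself) to $\Gamma'$, and --- the point that makes induction possible --- the disjointness hypothesis is itself preserved by the operation (\cref{thm:leaf_lemma}\ref{thm:leaf_lemma_2}). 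Starting from the hand-checked base cases $S_5^{1,1,1}$ and $S_6^{1,1,1}$ of \cref{ex:base_cases}, one adds leaves at the three distinguished vertices one family parameter at a time. Any collapsing scheme you build would need an analogous invariant that persists as the tails grow and restricts compatibly to all induced subgraphs simultaneously; without identifying such an invariant, your induction does not close, and the sufficiency direction remains unproven.
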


    \begin{figure}[ht]\label{fig:P_4_scarf_graphs}
        \centering

        \begin{subfigure}[b]{0.155\textwidth}
            \centering
            \includegraphics[width=\textwidth]{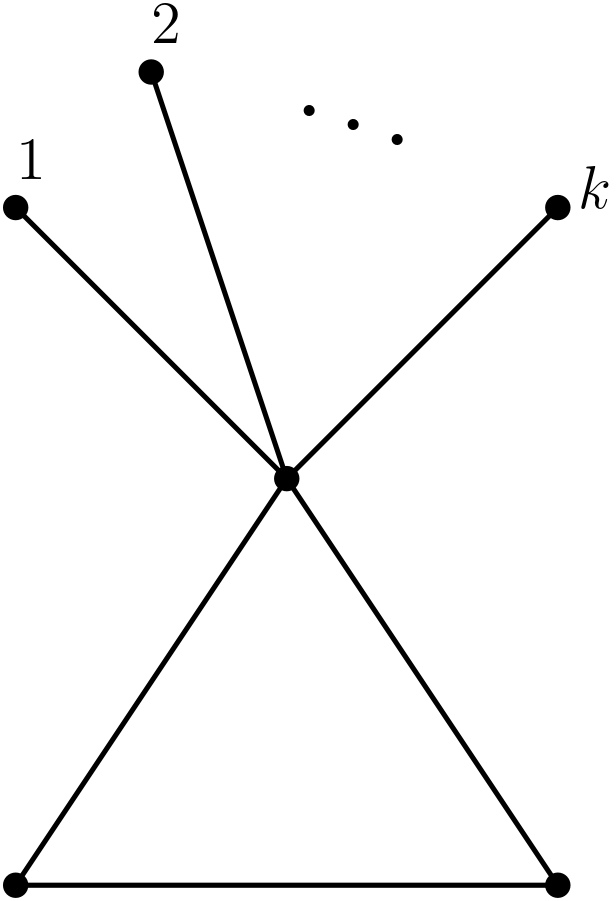}
            \caption{\(T_k\)}
        \end{subfigure}
        \hfill
        \begin{subfigure}[b]{0.155\textwidth}
            \centering
            \includegraphics[width=\textwidth]{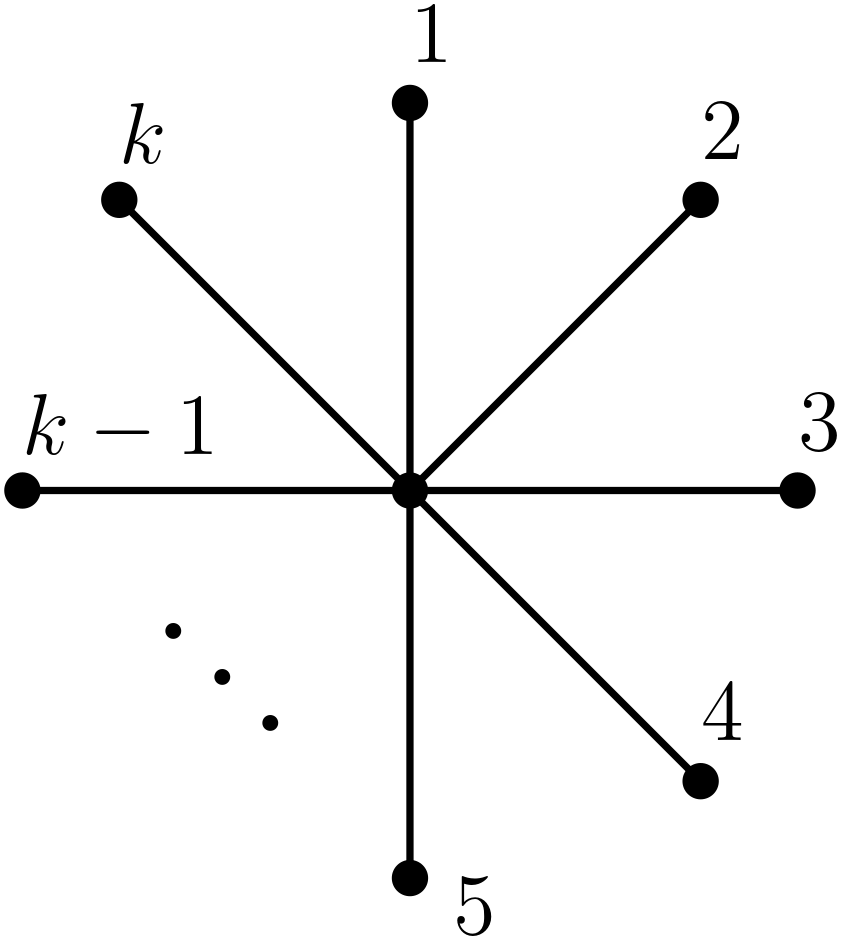}
            \caption{\(S_k\)}
        \end{subfigure}
        \hfill
        \begin{subfigure}[b]{0.155\textwidth}
            \centering
            \includegraphics[width=\textwidth]{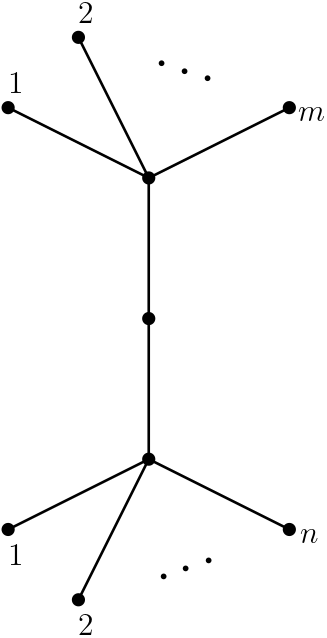}
            \caption{\(S_3^{m, n}\)}
        \end{subfigure}
        \hfill
        \begin{subfigure}[b]{0.155\textwidth}
            \centering
            \includegraphics[width=\textwidth]{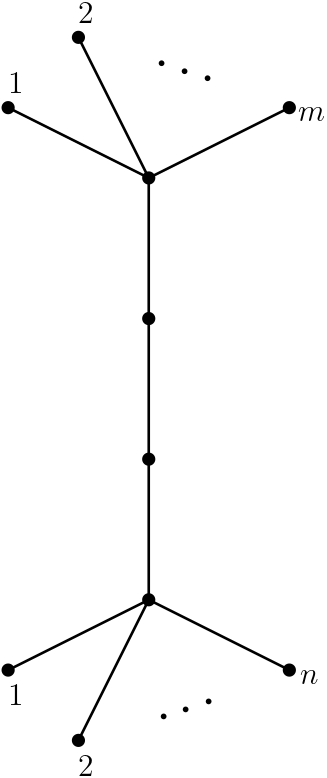}
            \caption{\(S_4^{m, n}\)}
        \end{subfigure}
        \hfill
        \begin{subfigure}[b]{0.155\textwidth}
            \centering
            \includegraphics[width=\textwidth]{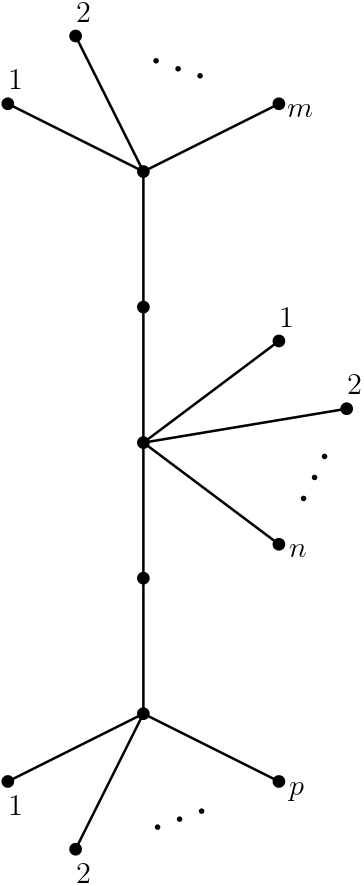}
            \caption{\(S_5^{m, n, p}\)}
        \end{subfigure}
        \hfill
        \begin{subfigure}[b]{0.155\textwidth}
            \centering
            \includegraphics[width=\textwidth]{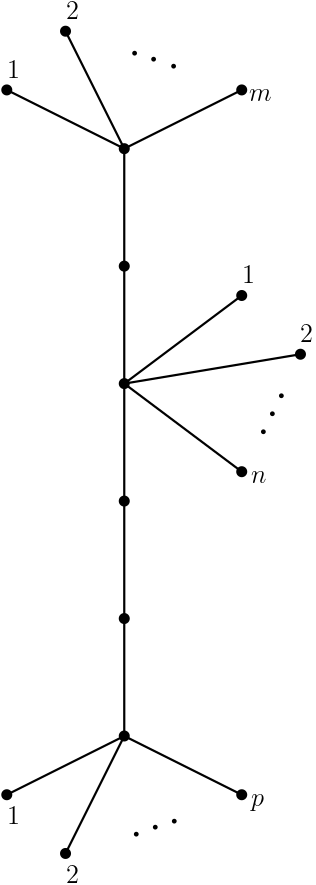}
            \caption{\(S_6^{m, n, p}\)}
        \end{subfigure}
        
        \caption{Graphs which are $\mc P_4$-Scarf.}
        \label{fig:S_graphs}
    \end{figure}

    The paper is structured as follows. In \cref{sec:setup}, we recall the construction of monomial resolutions from simplicial complexes, specifically Scarf complexes, and related results. \cref{sec:forbidden_subgraphs} focuses on the characterization of several classes of graphs based on their forbidden induced subgraphs, which are important in establishing our main results. In \cref{sec:connected_ideals}, we study the Scarf complex of connected ideals, with the main result being \cref{thm:A} (\cref{thm:connected-scarf-iff-short-path}). Finally, a study of 4-path ideals, with \cref{thm:B} (\cref{thm:P_4_Scarf}) as the main goal, is covered in \cref{sec:p4_scarf}.

    \begin{ack}
        The first author acknowledges support from the Infosys Foundation. The second and fourth authors were supported by the University of Utah Mathematics Department REU and the NSF RTG Grant $\#1840190$.  We would also like to thank Nathan Uricchio for his helpful comments and suggestions.
    \end{ack}

\section{Monomial resolutions from simplicial complexes}\label{sec:setup}

    In this section, we collect the definitions and notation which will be used throughout. Many of our results rely on \cref{lem:BPS} below. For this reason, our presentation focuses mainly on the relevant simplicial complexes and their properties. We also record several key ``restriction lemmas'' in the sense of \cite{HHZ04}.

    Let $S = \Bbbk[x_1,\dots, x_d]$ be a polynomial ring over a field $\Bbbk$ and $I \subseteq S$ a monomial ideal which is minimally generated by monomials $\mingens(I) = \{m_1,\dots, m_q\}$. Given a simplicial complex $\Delta$ on $\mingens(I)$, we let $\mc F_\Delta$ denote the complex of free $S$-modules defined in \cite[Section 2]{bayer1998monomial}. When this complex is acyclic, and therefore forms a free resolution of $S/I$, we say that $\Delta$ \textit{supports a resolution} of $S/I$. In the case that $\Delta = \Scarf(I)$ supports a resolution of $S/I$, we will say that $I$ is \textit{Scarf}. By construction, the complex $\mc F_{\Scarf(I)}$ is the minimal free resolution of $S/I$ when the ideal $I$ is Scarf.
    
    Determining whether or not a given simplicial complex supports a resolution of $I$ often relies on the following key result given by Bayer-Peeva-Sturmfels.    

    \begin{lemma}[\protect{\cite[Lemma 2.2]{bayer1998monomial}}]\label{lem:BPS}
         Let $I \subseteq S$ be a monomial ideal and $\Delta$ a simplicial complex with vertices labeled with elements of $\mingens(I)$. Then the complex $\mathcal{F}_\Delta$ is a resolution of $S/I$ if and only if for any monomial $m$, the simplicial complex $\Delta_{\leq m} \coloneqq \bigl\{ \sigma  \in \Delta : \lcm(\sigma) \big\vert m \bigr\} $ is either empty or acyclic over $\Bbbk$.
    \end{lemma}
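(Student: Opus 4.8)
The plan is to exploit the fine $\ZZ^d$-grading carried by $\mc F_\Delta$ and thereby reduce the homological question about a complex of free $S$-modules to a collection of purely simplicial questions, one in each multidegree. First I would recall the structure of $\mc F_\Delta$: it is a $\ZZ^d$-graded complex whose term in homological degree $i$ is $\bigoplus_{\sigma \in \Delta,\, |\sigma| = i} S(-a_\sigma)$, where $a_\sigma \in \NN^d$ is the exponent vector of $\lcm(\sigma)$, and whose differential sends a basis element $e_\sigma$ to $\sum_{v \in \sigma} \pm \tfrac{\lcm(\sigma)}{\lcm(\sigma \setminus v)}\, e_{\sigma \setminus v}$ with the usual simplicial signs. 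The empty face contributes $S$ in homological degree $0$, and the map out of degree $1$ sends $e_{\{m_j\}} \mapsto m_j\, e_\varnothing$; hence $H_0(\mc F_\Delta) = S/I$ regardless of $\Delta$. Consequently $\mc F_\Delta$ is a resolution of $S/I$ if and only if $H_i(\mc F_\Delta) = 0$ for every $i \geq 1$.

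Next I would fix a multidegree $b \in \ZZ^d$ and examine the strand $(\mc F_\Delta)_b$, a complex of finite-dimensional $\Bbbk$-vector spaces. The key observation is that the free summand $S(-a_\sigma)\,e_\sigma$ contributes a one-dimensional space to degree $b$ precisely when $a_\sigma \leq b$ componentwise, i.e. when $\lcm(\sigma)\mid x^b$, which is exactly the condition $\sigma \in \Delta_{\leq x^b}$. Moreover, on the one-dimensional pieces spanned by the monomial basis elements the homogenized differential restricts to $\pm 1$, matching the reduced simplicial boundary operator. Making this precise identifies $(\mc F_\Delta)_b$ with the augmented (reduced) simplicial chain complex of $\Delta_{\leq x^b}$, shifted by one to account for the convention that a face with $i$ vertices sits in homological degree $i$ while having simplicial dimension $i-1$. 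This yields the central isomorphism
\[
  H_i\bigl((\mc F_\Delta)_b\bigr) \;\cong\; \tilde H_{i-1}\bigl(\Delta_{\leq x^b}; \Bbbk\bigr)
  \qquad \text{for all } i \text{ and all } b.
\]

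Finally I would assemble the strands. Since homology commutes with the direct sum decomposition into multidegrees, $H_i(\mc F_\Delta) = \bigoplus_b H_i\bigl((\mc F_\Delta)_b\bigr) = \bigoplus_b \tilde H_{i-1}(\Delta_{\leq x^b})$. Therefore $H_i(\mc F_\Delta) = 0$ for every $i \geq 1$ if and only if $\tilde H_j(\Delta_{\leq x^b}; \Bbbk) = 0$ for every $j \geq 0$ and every $b$; as $b$ ranges over exponent vectors, this is exactly the requirement that each $\Delta_{\leq m}$ have vanishing reduced homology. The only bookkeeping concerns the degree $j=-1$ term: when $\Delta_{\leq m}$ is nonempty it automatically has $\tilde H_{-1}=0$, the genuinely empty (void) strands contribute nothing, and the surviving $\tilde H_{-1}$ contributions reassemble into $H_0(\mc F_\Delta)=S/I$ and are hence unconstrained; this is precisely the ``empty or acyclic'' dichotomy in the statement.

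The main obstacle is the identification in the second step: one must verify carefully that the degree-$b$ strand of the homogenized differential agrees with the reduced simplicial boundary map — in particular that the monomial coefficients specialize to the correct $\pm 1$ and that the homological-degree shift and empty-face conventions are handled consistently. Once this identification is established, the stated equivalence is a formal consequence of the $\ZZ^d$-graded decomposition of homology.
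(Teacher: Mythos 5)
Your proposal is correct, and it is essentially the same argument as the proof of this result in the literature: the paper itself offers no proof, citing \cite[Lemma 2.2]{bayer1998monomial}, and the Bayer--Peeva--Sturmfels proof proceeds exactly as you do, by decomposing $\mc F_\Delta$ into its $\ZZ^d$-graded strands and identifying the degree-$b$ strand with the (shifted) reduced chain complex of $\Delta_{\leq x^b}$ over $\Bbbk$. Your handling of the $\tilde H_{-1}$ bookkeeping, with the vertex-free subcomplexes accounting for the $\Bbbk$-dimensions of $(S/I)_b$, is also the standard and correct way to match the ``empty or acyclic'' dichotomy.
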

    
    In this way, the homology of $\Delta$ (and its subcomplexes) is reflected in the homology of the chain complex $\mc F_\Delta$, and vice versa. In particular, notice that $\Delta$ itself must be acyclic over $\Bbbk$ in order to support a resolution of $I$.

    \begin{example}
        Let $G$ denote a path on six vertices $x_1,x_2,x_3,x_4,x_5,x_6$. The $3$-connected ideal of $G$ is generated by four monomials:
            \[\mc C_3(G) = (x_1x_2x_3, x_2x_3x_4, x_3x_4x_5, x_4x_5x_6).\]
        For each $1 \leq i \leq 4$, the singleton sets $\{x_ix_{i+1}x_{i+2}\}$ and the two element subsets $\{x_ix_{i+1}x_{i+2},x_{i+1}x_{i+2}x_{i+3}\}$ have unique lcms. However, the lcm of the subset $\{x_1x_2x_3,x_4x_5x_6\}$, which is the product of all of the variables, is not unique. It follows that $\Scarf \bigl(\mc C_3(G) \bigr)$ itself a path and we will see in \cref{sec:connected_ideals} that $\Scarf \bigl(\mc C_3(G) \bigr)$ supports a resolution of $\mc C_3(G)$.
        
        \begin{figure}[h]
            \centering
            \includegraphics[width=0.5\textwidth]{Example.png}
            \caption{\(\Scarf \bigl(\mc C_3(G) \bigr)\). The label on a face is its lcm.}
        \end{figure}

        On the other hand, if $G'$ is a path on seven vertices, using the same reasoning as above, $\mc C_3(G')$ is generated by five monomials and $\Scarf \bigl(\mc C_3(G') \bigr)$ is the boundary of a pentagon. Hence, $\Scarf \bigl(\mc C_3(G') \bigr)$ has non-trivial first homology and therefore does not support a resolution of $\mc C_3(G')$.
    \end{example}

\subsection{Restriction Lemmas}

    Let $G = \bigl(V(G),E(G)\bigr)$ be a graph where $V(G)$ is the vertex set of $G$ and $E(G)$ is the edge set of $G$. We will consider non-empty finite simple graphs, that is, non-empty graphs with finitely many vertices and whose edge sets contain no repeated edges or loops. Our main focus will be on connected graphs and their connected (induced) subgraphs. A \emph{subgraph} $H$ of $G$ will refer to a collection $H = \bigl(V(H), E(H)\bigr)$ such that $V(H) \subseteq V(G)$ and $E(H) \subseteq E(G)$ with the stipulation that each edge in $E(H)$ has endpoints in $V(H)$. An \textit{induced} subgraph of $G$ is a subgraph $H$ whose edge set consists of all possible edges in $E(G)$ between vertices of $H$. In particular, if $V' \subseteq V(G)$, the induced subgraph of $G$ \textit{generated} by $V'$ consists of the vertices $V'$ and all edges in $E(G)$ which have both endpoints in $V'$. 

    We will make repeated use of a version of the Restriction Lemma \cite[Lemma 4.4]{HHZ04}. If $I$ is a monomial ideal and $m$ a monomial, we let $I^{\leq m}$ denote the monomial ideal generated by the elements of $\mingens(I)$ that divide $m$.

    \begin{lemma}[\protect{\cite[Lemma 2.4]{chau2025realizing}}]
        If $I$ is Scarf, then so is $I^{\leq m}$ for any monomial $m$.
    \end{lemma}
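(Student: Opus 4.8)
The plan is to reduce the statement entirely to the Bayer--Peeva--Sturmfels criterion (\cref{lem:BPS}) by first establishing a clean identity between the Scarf complex of the restriction and a restricted subcomplex of $\Scarf(I)$. Concretely, writing $\Delta \coloneqq \Scarf(I)$, I claim that
\[
    \Scarf(I^{\leq m}) = \Delta_{\leq m} = \bigl\{\sigma \in \Scarf(I) : \lcm(\sigma) \mid m\bigr\}.
\]
Granting this identity, the result follows quickly, so the bulk of the work lies in verifying it.

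For the identity I would argue both inclusions, using that the generators of $I^{\leq m}$ are exactly those $m_i \in \mingens(I)$ dividing $m$. For $\Delta_{\leq m} \subseteq \Scarf(I^{\leq m})$: if $\sigma \in \Scarf(I)$ with $\lcm(\sigma) \mid m$, then every element of $\sigma$ divides $m$, so $\sigma$ is a face of $\taylor(I^{\leq m})$; and since $\sigma$ already has a unique lcm among \emph{all} faces of $\taylor(I)$, it certainly has a unique lcm among the smaller collection of faces of $\taylor(I^{\leq m})$. The reverse inclusion is the subtle point. Suppose $\sigma \in \Scarf(I^{\leq m})$, and let $\tau \subseteq \mingens(I)$ be \emph{any} face of $\taylor(I)$ with $\lcm(\tau) = \lcm(\sigma)$. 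Because every element of $\sigma$ divides $m$ we have $\lcm(\sigma) \mid m$, hence $\lcm(\tau) \mid m$, which forces every element of $\tau$ to divide $m$; thus $\tau$ is in fact a face of $\taylor(I^{\leq m})$. The hypothesis that $\sigma$ has a unique lcm in $\taylor(I^{\leq m})$ then gives $\tau = \sigma$. This shows $\sigma \in \Scarf(I)$, and since $\lcm(\sigma) \mid m$ we conclude $\sigma \in \Delta_{\leq m}$.

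With the identity in hand, I would finish via \cref{lem:BPS}. To check that $\Scarf(I^{\leq m}) = \Delta_{\leq m}$ supports a resolution of $I^{\leq m}$, it suffices to verify that $(\Delta_{\leq m})_{\leq m'}$ is empty or acyclic over $\Bbbk$ for every monomial $m'$. But a face $\sigma$ lies in $(\Delta_{\leq m})_{\leq m'}$ exactly when $\lcm(\sigma) \mid m$ and $\lcm(\sigma) \mid m'$, i.e.\ when $\lcm(\sigma) \mid \gcd(m,m')$; hence
\[
    (\Delta_{\leq m})_{\leq m'} = \Delta_{\leq \gcd(m, m')}.
\]
Since $I$ is Scarf, \cref{lem:BPS} applied to $\Delta = \Scarf(I)$ and the monomial $\gcd(m,m')$ says precisely that $\Delta_{\leq \gcd(m,m')}$ is empty or acyclic. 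As this holds for every $m'$, the complex $\Scarf(I^{\leq m})$ supports a resolution of $I^{\leq m}$, i.e.\ $I^{\leq m}$ is Scarf.

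The main obstacle I anticipate is the reverse inclusion in the Scarf identity: one must be careful that the uniqueness-of-lcm condition transfers correctly between $\taylor(I)$ and $\taylor(I^{\leq m})$. The key realization that makes this work is that any competing face $\tau$ sharing the lcm of a face $\sigma$ with $\lcm(\sigma) \mid m$ is automatically supported on generators dividing $m$, so no information is lost in passing to the restriction. Everything else is a formal application of \cref{lem:BPS} together with the restriction-of-a-restriction identity $(\Delta_{\leq m})_{\leq m'} = \Delta_{\leq \gcd(m,m')}$.
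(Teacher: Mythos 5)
Your proof is correct, and every step checks out: the identity $\Scarf(I^{\leq m}) = \Scarf(I)_{\leq m}$ holds by exactly the two-inclusion argument you give (the key point, that any competitor face $\tau$ with $\lcm(\tau) = \lcm(\sigma) \mid m$ is automatically supported on generators dividing $m$, is right), and the finish via $(\Delta_{\leq m})_{\leq m'} = \Delta_{\leq \gcd(m,m')}$ together with both directions of \cref{lem:BPS} is clean. Note that the paper itself gives no proof of this statement: it is quoted as \cite[Lemma 2.4]{chau2025realizing}, so there is no internal argument to compare against. Your write-up is the standard proof one would expect behind that citation --- it amounts to proving the Scarf-complex analogue of the Restriction Lemma of \cite{HHZ04} directly from the Bayer--Peeva--Sturmfels criterion rather than invoking it --- and it is self-contained within the tools the paper has already introduced ($\taylor$, $\Scarf$, and \cref{lem:BPS}). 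The only cosmetic remark: you implicitly use that $\mingens(I^{\leq m}) = \{m_i \in \mingens(I) : m_i \mid m\}$, i.e.\ that these generators are already minimal; this is immediate since they form a subset of the antichain $\mingens(I)$, but it is worth one clause.
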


    Now let $G$ be a graph, $H$ an induced subgraph of $G$, and $m$ the product of all vertices in $H$. By definition, the ideals $\mc C_t(G)^{\leq m}$ and $\mc P_t(G)^{\leq m}$ are exactly $\mc C_t(H)$ and $\mc P_t(H)$, respectively. Thus, we obtain the following analogue of \cite[Lemma~4.4]{HHZ04}.

    \begin{lemma}\label{lem:Restriction-Scarf} 
        If $\mc C_t(G)$ (resp, $\mc P_t(G)$) is Scarf, then so is $\mc C_t(H)$ (resp, $\mc P_t(H)$) for any induced subgraph $H$ of $G$. In other words, if $\mc C_t(H)$ (resp, $\mc P_t(H)$) is not Scarf for some graph $H$, then neither is $\mc C_t(G)$ (resp, $\mc P_t(G)$) for any graph $G$ that admits $H$ as an induced subgraph. \qed
    \end{lemma}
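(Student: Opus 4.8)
The plan is to deduce this immediately from the previous restriction lemma for Scarf ideals together with the identification of restricted ideals recorded just before the statement. Concretely, given an induced subgraph $H$ of $G$, I would set $m \coloneqq \prod_{v \in V(H)} v$, the product of all vertices of $H$, and invoke the fact that $\mc C_t(G)^{\leq m} = \mc C_t(H)$ (resp.\ $\mc P_t(G)^{\leq m} = \mc P_t(H)$). Since the preceding lemma guarantees that $I^{\leq m}$ is Scarf whenever $I$ is Scarf, applying this with $I = \mc C_t(G)$ (resp.\ $I = \mc P_t(G)$) yields that $\mc C_t(H)$ (resp.\ $\mc P_t(H)$) is Scarf. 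The second sentence of the statement is then just the contrapositive.

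The only point that genuinely requires checking is the identity $\mc C_t(G)^{\leq m} = \mc C_t(H)$ (and its path analogue). By definition $\mc C_t(G)^{\leq m}$ is generated by those minimal generators of $\mc C_t(G)$ dividing $m$; since every such generator is squarefree, a generator $x_{i_1}\cdots x_{i_t}$ divides $m$ precisely when all of the vertices $x_{i_1},\dots,x_{i_t}$ lie in $V(H)$. Thus the generators of $\mc C_t(G)^{\leq m}$ correspond exactly to the $t$-vertex subsets of $V(H)$ that induce a connected subgraph of $G$.

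Here is the one place where the hypothesis that $H$ is \emph{induced} is essential: for a vertex subset $V' \subseteq V(H)$, the subgraph of $G$ induced on $V'$ coincides with the subgraph of $H$ induced on $V'$, so connectivity of $V'$ in $G$ is the same as connectivity of $V'$ in $H$. This identifies the generators of $\mc C_t(G)^{\leq m}$ with those of $\mc C_t(H)$. The path case is identical once one observes that a path in $G$ all of whose vertices lie in $V(H)$ is automatically a path in $H$, again because $H$ carries every edge of $G$ between its vertices. I expect no substantive obstacle beyond this bookkeeping step; the content of the result is entirely inherited from the previous lemma, and the statement is marked $\qed$ precisely because it is an immediate specialization.
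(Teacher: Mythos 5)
Your proposal is correct and takes essentially the same route as the paper: choose $m$ to be the product of the vertices of $H$, identify $\mc C_t(G)^{\leq m}$ with $\mc C_t(H)$ (resp.\ $\mc P_t(G)^{\leq m}$ with $\mc P_t(H)$), and apply the preceding lemma that restriction $I \mapsto I^{\leq m}$ preserves the Scarf property. The identification you verify carefully --- including the observation that it is exactly the \emph{induced} hypothesis that makes connectivity (resp.\ paths) in $G$ among vertices of $H$ agree with connectivity (resp.\ paths) in $H$ --- is precisely the step the paper dispatches with ``by definition.''
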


    In light of \cref{lem:Restriction-Scarf}, induced subgraphs serve as forbidden structures of graphs for determining the Scarf property of the associated connected or path ideals. The following result about square-free monomial ideals of a certain form will play an important role in identifying such forbidden structures.

    \begin{lemma}\label{lem:scarf-2-gens}
        Let $\Bbbk$ be a field, $t$ a positive integer, and $S= \Bbbk[x_1,\dots,x_{t},x_{t+1}]$. Assume $I$ is a square-free monomial ideal of $S$ generated by monomials of degree $t$. Then $I$ is Scarf if and only if $I$ is minimally generated by at most two monomials.
    \end{lemma}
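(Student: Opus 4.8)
The plan is to reduce the entire statement to one structural observation about the generators and then treat the two directions separately.

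First I would pin down the generators. Since $S$ has exactly $t+1$ variables, a square-free monomial of degree $t$ is a product of $t$ distinct variables and hence omits exactly one variable. Thus the only possible generators are the $t+1$ monomials $m_i \coloneqq M/x_i$, where $M \coloneqq x_1 x_2 \cdots x_{t+1}$, and $\mingens(I)$ is some subset $\{m_{i_1},\dots,m_{i_q}\}$ of these. The engine of the proof is the computation that $\lcm(m_i, m_j) = M$ for all $i \ne j$: the monomial $m_i$ misses only $x_i$ while $m_j$ supplies $x_i$ (as $j \ne i$), so their lcm recovers all $t+1$ variables. I would then record the immediate consequence that every face of $\taylor(I)$ of cardinality at least two carries the single label $M$.

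For the forward direction ($q \le 2$), I would handle $q = 1$ by noting that $I$ is principal, with Scarf complex a single vertex supporting the minimal resolution $0 \to S \to S \to S/I \to 0$. For $q = 2$, I would observe that the single edge of $\taylor(I)$ has the three faces $\{m_{i_1}\}$, $\{m_{i_2}\}$, $\{m_{i_1}, m_{i_2}\}$ carrying the pairwise-distinct labels $m_{i_1}$, $m_{i_2}$, $M$ (distinct because $\deg M = t+1 > t$ and $m_{i_1} \ne m_{i_2}$). Hence no face is discarded, $\Scarf(I) = \taylor(I)$, and the latter always supports a resolution; so $I$ is Scarf.

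For the reverse direction ($q \ge 3$), I would exploit the label collapse: all $\binom{q}{2} \ge 3$ edges, and indeed all higher faces, share the single label $M$, so none has a unique lcm and every face of dimension at least one is deleted when passing to $\Scarf(I)$. What remains is exactly the $q \ge 3$ isolated vertices, a disconnected complex with $\widetilde{H}_0 \ne 0$. Applying \cref{lem:BPS} with $m = M$—every vertex label $m_{i_a}$ divides $M$, so $\Scarf(I)_{\le M} = \Scarf(I)$—shows this subcomplex is neither empty nor acyclic, whence $\mc F_{\Scarf(I)}$ is not a resolution and $I$ is not Scarf. I expect no genuine obstacle here: once the identity $\lcm(m_i, m_j) = M$ is in hand the remainder is bookkeeping, and the only point demanding a little care is the $q = 2$ case, where I must confirm that the three labels on the edge really are distinct so that the Scarf and Taylor complexes coincide.
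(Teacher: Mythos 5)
Your proof is correct and follows essentially the same route as the paper: both hinge on the observation that the lcm of any two distinct generators is the full product $x_1\cdots x_{t+1}$ (the paper derives this from a degree count on square-free monomials, you from the fact that each generator omits exactly one variable), so that for three or more generators the Scarf complex degenerates to isolated vertices with nontrivial $\widetilde{H}_0$, while for at most two generators the Scarf and Taylor complexes coincide. The only differences are expository: you spell out the ``at most two generators'' direction and the explicit appeal to \cref{lem:BPS} with $m = x_1\cdots x_{t+1}$, which the paper leaves implicit.
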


    \begin{proof}
        If $I$ is generated by at most two square-free generators of degree $t$, then it is not hard to see that its Taylor resolution is minimal. This implies that $I$ is Scarf. 
        
        Conversely, assume that $\mingens(I)=\{m_1,\dots, m_n\}$ where $n\geq 3$ and let $\sigma\subseteq \mingens(I)$ with $\lvert \sigma \rvert \geq 2$. Notice that the monomial $\lcm(\sigma)$ is of degree of at least $t+1$, and since it is square-free, we must have $\lcm(\sigma) \big\vert x_1\dots x_{t+1}$. This implies that $\lcm(\sigma)=x_1\dots x_{t+1}$, and since $n\geq 3$, the label $x_1\dots x_{t+1}$ is not unique. Thus, $\Scarf(I)$ consists only of the vertices $\{m_1,m_2,\dots,m_n\}$. Since $n \ge 3$, $\Scarf(I)$ has non-trivial reduced homology over $\Bbbk$, and thus does not support a resolution of $I$.
    \end{proof}
    
    Translating this to the context of connected and path ideals, this means that if we have a connected graph $G$ such that $\mathcal{C}_t(G)$ (resp, $\mathcal{P}_t(G)$) is Scarf for some $G$, then for any induced subgraph $H$ of $G$ with $t+1$ vertices, the ideal $\mathcal{C}_t(H)$ (resp, $\mathcal{P}_t(H)$) must have at most two minimal generators. By definition, if $G$ is a connected graph and $H$ is a subgraph with the same number of vertices, then the numbers of paths (of a fixed length) and connected induced subgraphs (of a fixed size) of $G$ are at least those of $H$. In other words, we have the following lemma.
    
    \begin{lemma}
        Let $G$ be a connected graph and $H$ a subgraph of $G$ where $\lvert V(G) \rvert = \lvert V(H) \rvert$. Then $\bigl\lvert\mingens\bigl(\mathcal{C}_t(G)\bigr)\bigr\rvert \geq \bigl\lvert\mingens\bigl(\mathcal{C}_t(H)\bigr)\bigr\rvert$ and $\bigl\lvert\mingens\bigl(\mathcal{P}_t(G)\bigr)\bigr\rvert \geq$\\ $\bigl\lvert\mingens\bigl(\mathcal{P}_t(H)\bigr)\bigr\rvert$ for any integer $t \geq 2$. \qed
    \end{lemma}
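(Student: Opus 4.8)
The plan is to identify the minimal generators of each ideal with certain $t$-element vertex subsets and then to exploit the fact that the defining conditions---connectedness for $\mc C_t$ and path-existence for $\mc P_t$---are monotone under the addition of edges. Since the present lemma is purely combinatorial, the commutative-algebra machinery of the previous sections plays no role; everything reduces to counting subsets.

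First I would record that both $\mc C_t(\,\cdot\,)$ and $\mc P_t(\,\cdot\,)$ are generated by square-free monomials all of the same degree $t$. Two distinct square-free monomials of equal degree satisfy no divisibility relation (for square-free monomials, divisibility means containment of supports, and equal-size supports force equality), so the given generating sets are automatically minimal and each minimal generator is determined by its support, a $t$-element subset of $V$. Writing a generator as $\prod_{x \in W} x$, this puts $\mingens\bigl(\mc C_t(G)\bigr)$ in bijection with the collection $\mc A_t(G)$ of $t$-subsets $W \subseteq V(G)$ for which the induced subgraph $G[W]$ is connected, and $\mingens\bigl(\mc P_t(G)\bigr)$ in bijection with the collection $\mc B_t(G)$ of $t$-subsets $W$ that carry a path on all of their vertices (equivalently, a Hamiltonian path of $G[W]$). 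The same holds for $H$, so it suffices to prove $|\mc A_t(H)| \le |\mc A_t(G)|$ and $|\mc B_t(H)| \le |\mc B_t(G)|$.

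Next I would use that $H$ is a subgraph of $G$ with $|V(H)| = |V(G)|$, hence $V(H) = V(G)$ and $E(H) \subseteq E(G)$. For any $t$-subset $W$, the induced subgraphs $H[W]$ and $G[W]$ share the vertex set $W$ while $E(H[W]) \subseteq E(G[W])$; thus $H[W]$ is a spanning subgraph of $G[W]$. Two elementary monotonicity facts then finish the argument: if a spanning subgraph is connected then so is the ambient graph, and any path in a subgraph is still a path in the ambient graph. These give $\mc A_t(H) \subseteq \mc A_t(G)$ and $\mc B_t(H) \subseteq \mc B_t(G)$, and taking cardinalities yields the two claimed inequalities. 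There is no genuine obstacle here; the only points that warrant a moment of care are confirming the bijection between minimal generators and vertex subsets (which relies on all generators having the common degree $t$) and comparing the \emph{induced} subgraphs $H[W]$ and $G[W]$ rather than $H$ and $G$ directly.
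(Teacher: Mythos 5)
Your proof is correct and takes essentially the same route as the paper, which states this lemma without proof as following ``by definition'': a subgraph $H$ of $G$ on the same vertex set has every path and every connected induced subgraph inherited by $G$, which is exactly the monotonicity you verify via the spanning-subgraph comparison of $H[W]$ and $G[W]$. Your preliminary step identifying minimal generators with $t$-element vertex subsets (using that all generators are square-free of equal degree $t$, so none divides another) is a detail the paper leaves implicit but is needed for the counting to be literal.
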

    
    Combining this with \cref{lem:Restriction-Scarf} and \cref{lem:scarf-2-gens}, we obtain a restriction lemma using subgraphs which are not necessarily induced subgraphs. 
    
    \begin{lemma}\label{thm:rest_lem_subgraph}
        Let $t\geq 2$ be an integer. Let $G$ be a connected graph and $H$ a subgraph of $G$ where $\lvert V(H) \rvert=t+1$. If $\mathcal{C}_t(G)$ (resp, $\mathcal{P}_t(G)$) is Scarf, then so is $\mathcal{C}_t(H)$ (resp, $\mathcal{P}_t(H)$). In other words, if $\mathcal{C}_t(H)$ (resp, $\mathcal{P}_t(H)$) is not Scarf, neither is $\mathcal{C}_t(G)$ (resp, $\mathcal{P}_t(G)$) for any graph $G$ that admits $H$ as a subgraph. \qed
    \end{lemma}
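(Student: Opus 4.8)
The plan is to reduce everything to counting minimal generators and to route the argument through the induced subgraph of $G$ on the vertex set of $H$. Since $H$ has exactly $t+1$ vertices, both $\mc C_t(H)$ and $\mc P_t(H)$ are square-free monomial ideals in $t+1$ variables generated in degree $t$. By \cref{lem:scarf-2-gens}, each is Scarf if and only if it has at most two minimal generators. Thus the entire statement reduces to the numerical claim that $\bigl\lvert \mingens\bigl(\mc C_t(H)\bigr)\bigr\rvert \leq 2$ (and the analogous bound for $\mc P_t$).

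The main obstacle is that \cref{lem:Restriction-Scarf} cannot be applied to $H$ directly, because $H$ need not be an \emph{induced} subgraph of $G$ and the restriction lemma only transfers the Scarf property along induced subgraphs. To get around this, I would introduce $G'$, the induced subgraph of $G$ generated by $V(H)$. This $G'$ is a genuine induced subgraph of $G$ on $t+1$ vertices, so \cref{lem:Restriction-Scarf} applies: $\mc C_t(G)$ Scarf forces $\mc C_t(G')$ Scarf, and then \cref{lem:scarf-2-gens} gives $\bigl\lvert \mingens\bigl(\mc C_t(G')\bigr)\bigr\rvert \leq 2$. The identical chain handles the path ideal.

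It then remains to compare $H$ with $G'$. By construction $V(H) = V(G')$ and $E(H) \subseteq E(G')$, so every set of $t$ vertices that induces a connected subgraph of $H$ also induces a connected subgraph of $G'$ (enlarging the edge set cannot break connectivity), and likewise every path on $t$ vertices in $H$ is a path in $G'$. Hence each minimal generator of $\mc C_t(H)$ (resp.\ $\mc P_t(H)$) is a minimal generator of $\mc C_t(G')$ (resp.\ $\mc P_t(G')$); this is precisely the generator comparison recorded in the lemma preceding the statement, applied to the pair $H \subseteq G'$. Combining the two bounds yields $\bigl\lvert \mingens\bigl(\mc C_t(H)\bigr)\bigr\rvert \leq \bigl\lvert \mingens\bigl(\mc C_t(G')\bigr)\bigr\rvert \leq 2$, and a final appeal to \cref{lem:scarf-2-gens} finishes the argument. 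The only point requiring a moment's care is that $G'$ may be disconnected, so one should invoke the edge-containment comparison above directly rather than the connected-graph hypothesis in the counting lemma; since that comparison never used connectivity, no genuine difficulty arises.
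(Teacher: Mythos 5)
Your proof is correct and is exactly the argument the paper intends: the paper obtains this lemma by combining \cref{lem:Restriction-Scarf}, \cref{lem:scarf-2-gens}, and the generator-count comparison, routed through the induced subgraph $G'$ of $G$ on $V(H)$, just as you do. Your closing observation---that one should use the edge-containment comparison directly since $G'$ need not be connected, so the counting lemma's stated hypothesis does not literally apply---is a valid patch of a detail the paper glosses over (and is moot when $H$ is connected, since then $G'$ is too).
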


\section{Forbidden subgraphs}\label{sec:forbidden_subgraphs}

    The main results of this article, which are contained in \cref{sec:connected_ideals} and \cref{sec:p4_scarf}, rely primarily on the structure of the graphs involved; classifying graphs based on their (induced) subgraphs will be a repeated theme throughout. Here, we collect the graph-theoretic results needed in the later sections. 

    As above, we let $G = \bigl(V(G),E(G)\bigr)$ be a finite simple graph. For a vertex $v \in V(G)$, we will let $G - v$ denote the induced subgraph of $G$ generated by $V(G) \setminus \{v\}$. If $G$ is connected and $u,v \in V(G)$, then the \textit{distance} between $u$ and $v$ is the minimum possible length of a path connecting $u$ and $v$ in $G$. The \textit{diameter} of $G$ is the maximum distance between two vertices of $G$. If $G$ is a tree -- that is, a connected graph which contains no cycles -- then there is exactly one path between any two vertices. In this case, the diameter is simply the length of the longest path in $G$.

    Our first observation is that any connected subgraph of $G$ is contained in a connected \textit{induced} subgraph (of any given size) of $G$. 

    \begin{lemma}\label{thm:Extension-of-connected-subgraphs}
        Suppose that $G$ is a connected graph with $n$ vertices and $H$ is a connected subgraph of $G$ with $m$ vertices. For any $m \leq k \leq n$, there exists a connected induced subgraph of $G$ with $k$ vertices which contains $H$.
    \end{lemma}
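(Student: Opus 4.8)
The plan is to grow $H$ one vertex at a time while preserving connectivity. First I would reduce to the case where $H$ is itself an induced subgraph: let $W_0$ denote the induced subgraph of $G$ generated by $V(H)$. Since $W_0$ contains every edge of $H$, it contains $H$ as a subgraph and is therefore connected; moreover $\lvert V(W_0)\rvert = m$. This settles the case $k = m$ and, since any subgraph containing $W_0$ also contains $H$, it lets us replace $H$ by the connected induced subgraph $W_0$ for the remainder of the argument.

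For the general case I would induct on $k$. Suppose that for some $k$ with $m \le k < n$ we have produced a connected induced subgraph $W$ of $G$ on $k$ vertices which contains $H$; the base case $k = m$ is $W = W_0$. Since $k < n$, the set $V(W)$ is a proper nonempty subset of $V(G)$. The key step is to locate a vertex $v \in V(G) \setminus V(W)$ that is adjacent in $G$ to some vertex of $W$. Granting this, the induced subgraph $W'$ generated by $V(W) \cup \{v\}$ has $k+1$ vertices, contains $W$ (and hence $H$), and is connected, since $v$ is joined by an edge of $G$ to the connected subgraph $W$. Iterating this step until the vertex count reaches the target value $k$ then yields the desired subgraph.

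The one point requiring justification -- and the main obstacle -- is the existence of such a boundary vertex $v$. This is exactly where connectivity of $G$ enters: because $G$ is connected and $V(W)$ is a proper nonempty subset of $V(G)$, there must be at least one edge of $G$ with one endpoint in $V(W)$ and the other outside it. Concretely, I would fix any $u \in V(W)$ and any $w \in V(G) \setminus V(W)$, take a path from $u$ to $w$ in $G$, and let $v$ be the first vertex along this path that does not lie in $V(W)$; its predecessor lies in $V(W)$, so $v$ is adjacent to $W$ as required. This completes the inductive step, and hence the proof.
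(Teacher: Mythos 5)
Your proof is correct and follows essentially the same strategy as the paper's: induct on the number of vertices, start from the induced subgraph on $V(H)$, and at each step absorb a vertex outside the current subgraph that is adjacent to it, which exists by connectivity of $G$. The only difference is that you spell out the existence of the boundary vertex via a path argument, a detail the paper leaves implicit.
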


    \begin{proof}
        We induct on $k$. For the base case $k = m$, we may take the induced subgraph of $G$ generated by $V(H)$. Next, suppose $m \leq k < n$ and that there exists a connected induced subgraph $H'$ of $G$ with $k$ vertices containing $H$. Since $k < n$ and $G$ is connected, there exists a vertex $v \in V(G) \setminus V(H')$ connected to a vertex of $H$ by an edge. Thus, the induced subgraph generated by $V(H') \cup \{v\}$ is connected, has $k+1$ vertices, and contains $H$.
    \end{proof}

    \begin{lemma}\label{thm:connected-after-removing-vertices}
        Let $G$ be a connected graph with $n$ vertices. For any $1 \leq k \leq n$, there are at least $\lceil n/k \rceil$ connected induced subgraphs of $G$ with $k$ vertices. In particular, if $G$ is not a single vertex, there are at least two vertices $v \in V(G)$ such that $G - v$ is connected. 
    \end{lemma}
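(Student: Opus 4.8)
The plan is to prove the main count by a greedy covering argument that leverages \cref{thm:Extension-of-connected-subgraphs}, and then to extract the ``in particular'' claim as the special case $k = n-1$.

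First I would build a sequence of connected induced subgraphs $A_1, A_2, \dots$ of $G$, each with exactly $k$ vertices, as follows. Begin with no vertices marked as covered. As long as there is some vertex $u \in V(G)$ not contained in any $A_j$ constructed so far, apply \cref{thm:Extension-of-connected-subgraphs} to the connected single-vertex subgraph $\{u\}$ (legitimate since $1 \le k \le n$) to produce a connected induced subgraph $A_{i+1}$ of $G$ on $k$ vertices containing $u$; add it to the list and mark its vertices as covered. Since each step covers the previously uncovered vertex $u$, the process terminates after finitely many steps, producing $A_1, \dots, A_r$.

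The two observations that finish the count are: first, the subgraphs $A_1, \dots, A_r$ are pairwise distinct, because each $A_{j+1}$ contains a vertex $u \notin V(A_1) \cup \dots \cup V(A_j)$; and second, upon termination every vertex of $G$ is covered, so $V(G) = \bigcup_{j=1}^{r} V(A_j)$. As each $\lvert V(A_j)\rvert = k$, the second observation gives $n \le rk$, hence $r \ge \lceil n/k \rceil$, and distinctness then yields at least $\lceil n/k \rceil$ connected induced subgraphs with $k$ vertices. For the ``in particular'' statement I would specialize to $k = n-1$, which is valid since $G$ is not a single vertex, i.e.\ $n \ge 2$. A connected induced subgraph on $n-1$ vertices is exactly $G - v$ for a vertex $v$ with $G - v$ connected, and distinct subgraphs correspond to distinct such $v$; since $\lceil n/(n-1) \rceil = 2$ for every $n \ge 2$, the count produces at least two vertices $v$ with $G - v$ connected.

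I expect the argument to be essentially obstruction-free once \cref{thm:Extension-of-connected-subgraphs} is in hand, which is precisely why that lemma was proved first. The only points demanding care are ensuring the greedily chosen subgraphs are genuinely distinct (guaranteed by always introducing a fresh uncovered vertex at each step) and confirming the elementary bound $\lceil n/(n-1) \rceil = 2$ for $n \ge 2$ used in the final specialization.
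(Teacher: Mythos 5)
Your proof is correct and takes essentially the same approach as the paper: both rest on \cref{thm:Extension-of-connected-subgraphs} to ensure every vertex lies in some connected induced subgraph on $k$ vertices, followed by the counting bound $n \le (\text{number of subgraphs}) \cdot k$. Your greedy construction is just a constructive variant of the paper's argument, which simply takes the family of \emph{all} such subgraphs at once and bounds its size the same way.
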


    \begin{proof}
        Fix $1 \leq k \leq n$ and let $A_k$ denote the set of connected induced subgraphs of $G$ with $k$ vertices and suppose that $\lvert A_k \rvert = m$. \cref{thm:Extension-of-connected-subgraphs} implies that every vertex is contained in such a subgraph, since every vertex forms a connected subgraph. Thus, \(\bigl\lvert \bigcup_{H \in A_k} V(H) \bigr\rvert = \lvert V(G) \rvert = n\). On the other hand, \(\bigl\lvert\bigcup_{H \in A_k} V(H) \bigr\rvert \leq \sum_{H \in A_k} \lvert V(H) \rvert = mk\). Together, these imply that \(m \geq n/k\). 

        If \(G\) is not a single vertex, there are at least \(\bigl\lceil n/(n - 1) \bigr\rceil = 2\) connected induced subgraphs of \(G\) with \(k = n - 1\) vertices. That is, there are at least two vertices that leave a connected graph when removed from $G$.
    \end{proof}

    Let $P_n$ and $C_n$ denote the path and the cycle on $n$ vertices, respectively. We also let $S_n$ denote the star graph with $n$ leaves. Recall that the \textit{degree} of a vertex is the number of edges connected to it.

    \begin{lemma}\label{thm:cycle_path_equiv}
        The following are equivalent for a connected graph $G$: 
            \begin{enumerate}[label = (\roman*)]
                \item\label{thm:cycle_path_equiv_1} Every vertex of $G$ has degree at most two;
                \item\label{thm:cycle_path_equiv_2} $G$ does not contain $S_3$ as a subgraph;
                \item\label{thm:cycle_path_equiv_3} $G$ is a cycle or a path. 
            \end{enumerate}
        Furthermore, if every connected induced subgraph of $G$ with $k \ge 4$ vertices is a path or a cycle (with $k$ fixed), then $G$ itself is a path or a cycle.
    \end{lemma}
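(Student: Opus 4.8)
The equivalence is mostly definitional once one unwinds the notation. First I would observe that $G$ contains $S_3$ as a subgraph precisely when some vertex is adjacent to three distinct vertices, that is, has degree at least three; hence conditions (i) and (ii) are negations of the same statement and are trivially equivalent. Likewise (iii)$\Rightarrow$(i) is immediate, since in a path the two endpoints have degree one and all interior vertices degree two, while in a cycle every vertex has degree two.

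The only implication with content is (i)$\Rightarrow$(iii), which I would establish by splitting on whether $G$ contains a cycle. Assume every vertex of $G$ has degree at most two. If $G$ contains a cycle $C$, then each vertex of $C$ already uses both of its allowed edges on $C$, so it has no neighbor outside $C$; since $G$ is connected this forces $G = C$, a cycle. If instead $G$ is acyclic, then $G$ is a tree, and a tree in which every vertex has degree at most two has exactly two leaves (a degree count gives $\sum_v \deg(v) = 2\lvert V(G)\rvert - 2$, which with all degrees in $\{1,2\}$ leaves room for only two vertices of degree one) and is therefore a path. This proves the three-way equivalence.

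For the final assertion I would argue the contrapositive, assuming throughout that $\lvert V(G)\rvert \ge k$ (otherwise $G$ has no connected induced subgraph on $k$ vertices and the hypothesis is vacuous, so this mild assumption is genuinely needed and is exactly the range in which \cref{thm:Extension-of-connected-subgraphs} applies). Suppose $G$ is neither a path nor a cycle. By the equivalence just proved, $G$ has a vertex $v$ of degree at least three, say with distinct neighbors $a,b,c$. The induced subgraph on $\{v,a,b,c\}$ is connected, and because it is induced it retains all three edges $va,vb,vc$, so $v$ has degree three in it; in particular it is neither a path nor a cycle. Since this is a connected subgraph on $4 \le k \le \lvert V(G)\rvert$ vertices, \cref{thm:Extension-of-connected-subgraphs} yields a connected induced subgraph $H$ of $G$ with exactly $k$ vertices containing $\{v,a,b,c\}$. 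As $H$ is induced and contains $v,a,b,c$, the edges $va,vb,vc$ survive in $H$, so $v$ has degree at least three in $H$ and $H$ is neither a path nor a cycle, contradicting the hypothesis.

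The genuine obstacle is confined to this last step: one must exhibit a witness of the prescribed size $k$, not merely the obvious $4$-vertex witness, and the whole point of invoking \cref{thm:Extension-of-connected-subgraphs} is that it enlarges the $S_3$-containing subgraph to exactly $k$ vertices while preserving both connectedness and (via inducedness) the offending degree-three vertex. Everything else reduces to the elementary structure of graphs with maximum degree two.
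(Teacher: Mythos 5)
Your proof is correct, and its skeleton matches the paper's: the equivalence of \ref{thm:cycle_path_equiv_1} and \ref{thm:cycle_path_equiv_2} and the implication \ref{thm:cycle_path_equiv_3}$\Rightarrow$\ref{thm:cycle_path_equiv_1} are dismissed as immediate, the real content is \ref{thm:cycle_path_equiv_1}$\Rightarrow$\ref{thm:cycle_path_equiv_3}, and the final claim is obtained exactly as in the paper, by extending an $S_3$ to a connected induced subgraph on $k$ vertices via \cref{thm:Extension-of-connected-subgraphs}. Where you genuinely diverge is in proving \ref{thm:cycle_path_equiv_1}$\Rightarrow$\ref{thm:cycle_path_equiv_3}. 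The paper argues by induction on $\lvert V(G) \rvert$: it uses \cref{thm:connected-after-removing-vertices} to find a vertex $v$ with $G - v$ connected, concludes by induction that $G-v$ is a path or a cycle, and then analyzes how $v$ can attach (only to leaves of a path; attachment to a cycle is impossible). You instead split on whether $G$ contains a cycle: if it does, each cycle vertex has already exhausted its degree budget, so connectivity and the absence of chords force $G$ to equal that cycle; if not, $G$ is a tree, and the degree count $\sum_v \deg(v) = 2\lvert V(G)\rvert - 2$ with all degrees in $\{1,2\}$ forces exactly two leaves, hence a path. Your route is more self-contained --- it avoids the vertex-removal lemma entirely and rests on standard facts about trees --- whereas the paper's induction keeps the argument stylistically uniform with the vertex-removal technique it reuses elsewhere (e.g.\ in \cref{thm:three_verticies}). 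Two small remarks: in your tree branch, a one- or two-vertex tree does not literally have ``exactly two'' vertices of degree one, though it is of course still a path, so that degenerate case deserves a clause; and your explicit observation that the final claim requires $\lvert V(G) \rvert \ge k$ (otherwise the hypothesis is vacuous) is a point the paper leaves implicit when it invokes \cref{thm:Extension-of-connected-subgraphs}.
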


    \begin{proof}
        The equivalence of the first two conditions and the fact that condition \ref{thm:cycle_path_equiv_3} implies \ref{thm:cycle_path_equiv_1} are straightforward to show. So, it remains to show that \ref{thm:cycle_path_equiv_1} implies \ref{thm:cycle_path_equiv_3}. 
        We will do this by induction. If $\lvert V(G) \rvert = 1$, there is nothing to show. Suppose $\lvert V(G) \rvert = n + 1$ for some $n \geq 1$ and that the implication holds for connected graphs with $n$ vertices. By \cref{thm:connected-after-removing-vertices}, there exists $v \in V(G)$ such that $G - v$ is connected. Note that $G - v$ still satisfies \ref{thm:cycle_path_equiv_1} and thus, by induction, $G-v$ is either a path or a cycle on $n$ vertices.

        If $G-v = P_n$, then the vertex $v$ can only be connected to the leaves of $P_n$; if it is connected to one of the ``interior" vertices, the assumption that every vertex of $G$ has degree $\leq 2$ is violated. If $v$ is connected to only one leaf of $G-v$, then $G = P_{n + 1}$, and if $v$ is connected to both leaves, then $G = C_{n + 1}$. 

        Now suppose that $G - v = C_n$. The vertex $v$ cannot be an isolated vertex of $G$, since $G$ is connected, so it must share an edge with some vertex of $C_n$. But then this vertex would have degree $3$ which is not possible. So, this case cannot occur and we have that $G = P_{n + 1}$ or $G = C_{n + 1}$ as desired.

        The final statement now follows readily from the equivalence of \ref{thm:cycle_path_equiv_1}-\ref{thm:cycle_path_equiv_3}. Indeed, if $G$ is neither a path nor a cycle, then $G$ must contain $S_3$ as a subgraph. This subgraph is connected, with $\lvert V(S_3) \rvert = 4 \leq k$, so we may apply \cref{thm:Extension-of-connected-subgraphs} to conclude that there is a connected induced subgraph of $G$ with $k$ vertices containing $S_3$. Hence, this subgraph is neither a cycle nor a path, again by the equivalence of the statements. 
    \end{proof}

    \begin{lemma}\label{thm:three_verticies}
        Let $G$ be a connected graph. If $G$ is not a path, then there exist at least three distinct vertices $v \in V(G)$ such that $G - v$ is connected.
    \end{lemma}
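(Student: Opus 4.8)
The plan is to reinterpret the statement in terms of non-cut vertices: a vertex $v$ satisfies ``$G - v$ is connected'' precisely when $v$ is not a cut vertex, so the goal is to produce three distinct non-cut vertices. The key observation I would use is that the leaves of \emph{any} spanning tree $T$ of $G$ are non-cut vertices of $G$: if $v$ is a leaf of $T$, then $T - v$ is a spanning tree of $G - v$, so $G - v$ is connected. This already recovers the two-vertex conclusion of \cref{thm:connected-after-removing-vertices} (every tree on at least two vertices has at least two leaves), and the extra hypothesis that $G$ is not a path is exactly what I will exploit to find a third. Note also that $G$ not being a path forces $\lvert V(G) \rvert \geq 3$.

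With $T$ a fixed spanning tree of $G$, I would split into two cases. If $T$ is not a path, then since a tree all of whose vertices have degree at most two is a path (cf.\ \cref{thm:cycle_path_equiv}), $T$ must contain a vertex of degree at least three; an elementary leaf count for trees then yields at least three leaves, and by the observation above these give three distinct non-cut vertices of $G$, as desired. If instead $T$ is a path, write $T = v_0 v_1 \cdots v_{n-1}$, a Hamiltonian path of $G$. Its endpoints $v_0$ and $v_{n-1}$ are the two leaves of $T$, hence non-cut vertices, so it only remains to locate one \emph{interior} non-cut vertex.

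Here is where I would use that $G$ is not a path: since $E(G) \supsetneq E(T)$, there is a chord $v_i v_j \in E(G) \setminus E(T)$ with (after relabeling) $i < j$ and $j \geq i+2$. I claim $v_{i+1}$ is the desired third non-cut vertex. After deleting $v_{i+1}$, the initial segment $v_0, \dots, v_i$ remains a connected path, which meets $v_j$ through the chord $v_i v_j$, while $v_j$ is in turn joined to $v_{i+2}, \dots, v_{j-1}$ and to $v_{j+1}, \dots, v_{n-1}$ along the surviving path edges; hence $G - v_{i+1}$ is connected. Moreover $v_{i+1}$ is distinct from both endpoints, as $1 \leq i+1 \leq j-1 \leq n-2$.

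The argument is short, and the only step requiring genuine care---the main obstacle---is the Hamiltonian-path case: one must verify both that the deleted vertex $v_{i+1}$ is genuinely interior (distinct from the two endpoints already found) and that the chord $v_i v_j$ restores connectivity after its removal. This is precisely the step that forces the use of the hypothesis that $G$ is not a path, and it also transparently covers examples such as the cycles $C_n$, every spanning tree of which is a Hamiltonian path yet each of whose $n$ vertices is non-cut.
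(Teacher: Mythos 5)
Your proof is correct, but it takes a genuinely different route from the paper's. The paper argues by induction on $\lvert V(G)\rvert$: it uses \cref{thm:connected-after-removing-vertices} to produce one vertex $v$ with $G-v$ connected, then splits into cases according to whether $G-v$ is a path --- the non-path case is handled by the inductive hypothesis together with an observation about re-attaching $v$, and the path case by examining which vertices of the path $v$ is adjacent to (adjacency to an interior vertex lets both path-ends be deleted; otherwise $G$ is forced to be a cycle, where every vertex works). Your argument is direct and non-inductive: the leaves of any spanning tree $T$ of $G$ are non-cut vertices of $G$, since $T-v$ remains a spanning tree of $G-v$; if $T$ is not a path it has a vertex of degree at least three and hence at least three leaves, while if $T$ is a Hamiltonian path $v_0v_1\cdots v_{n-1}$, the hypothesis that $G$ is not a path forces a chord $v_iv_j$ with $j\geq i+2$, which makes the interior vertex $v_{i+1}$ non-cut --- and your verification that $v_{i+1}$ is distinct from $v_0,v_{n-1}$ and that the chord restores connectivity is sound. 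What your approach buys is brevity and transparency: it isolates exactly where the ``not a path'' hypothesis enters (it guarantees either a high-degree vertex in $T$ or a chord), at the cost of importing two standard facts (existence of spanning trees, and the leaf count for trees with a vertex of degree at least three). What the paper's approach buys is self-containment within its own toolkit: it reuses \cref{thm:connected-after-removing-vertices} and mirrors the inductive template already employed in \cref{thm:cycle_path_equiv}, so no outside graph-theoretic results are needed.
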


    \begin{proof}
        We begin by making the following observation which we will use several times throughout the proof: let $v, w \in V(G)$ and suppose that $(G - v) - w$ is connected. Then $G - v$ is connected provided that $w$ shares an edge with some vertex of $G - v$, that is, a vertex of $G$ other than $v$.
        
        We proceed by induction on $\lvert V(G) \rvert$. For the base case, $\lvert V(G) \rvert = 1$, the statement is vacuously true. Assume that the result holds for connected graphs with $n \geq 1$ vertices, and let $G$ be a connected graph with $\lvert V(G) \rvert = n + 1$ which is not a path. By \cref{thm:connected-after-removing-vertices}, there exists a vertex $v \in V(G)$ such that $G' = G - v$ is connected. We consider two cases regarding the shape of $G'$.

        First, suppose that $G'$ is not a path. By the inductive hypothesis, there exist three vertices $v_1, v_2, v_3 \in V(G')$ such that $G' - v_1$, $G' - v_2$, and $G' - v_3$ are all connected. Since $G$ is connected, $v$ must share an edge with some vertex $w \in V(G')$, and at least two of $v_1, v_2, v_3$ must be different from $w$. Without loss of generality, suppose that $w \neq v_1, v_2$. By the observation above, $G - v_1$ and $G - v_2$ are connected. Thus, we have found two vertices other than $v$ that induce a connected subgraph when removed from $G$. 

        Next, suppose that $G'$ is a path. Again, since $G$ is connected, $v$ must share an edge with at least one vertex of $G'$. We consider two cases, depending on which vertices of $G'$ are adjacent to $v$: 
        
        \begin{enumerate}[label = (\roman*)]
            \item Assume $v$ is connected to an interior vertex of $G'$ and let $u$ and $w$ be the two outer vertices of the leaves of $G'$. Since $G'$ is a path, so is $G' - u$ and since $v$ shares an edge with a vertex different from $u$, we can conclude that $G-u$ is connected by the observation above. By symmetry, the same holds for $G - w$.

            \item Assume that $v$ is not connected to an interior vertex of $G'$. Notice that $v$ must be connected to both of the leaf vertices $u$ and $w$ since if it is only attached to one of them, then $G$ is a path. So, $v$ is attached to both leaves of $G'$ and therefore $G$ is a cycle. Thus, removing \textit{any} one vertex from $G$ induces a connected subgraph.
        \end{enumerate}
        
        In each case, at least three distinct vertices can be found which induce a connected subgraph when removed. This was the desired conclusion. 
    \end{proof}
    
    The next lemma classifies the trees which avoid certain forbidden subgraphs; see \cref{fig:forbidden_subgraphs}. This classification is a key ingredient to the results of \cref{sec:p4_scarf}.
    
    \begin{lemma}\label{thm:special_trees}
        The following are equivalent for a (non-empty) tree $G$.
            \begin{enumerate}[label = (\roman*)]
                \item \label{thm:special_trees_1} $G$ is isomorphic to one of the following:
                    \begin{itemize}
                        \item $S_k$ for some $k \geq 0$;
                        \item $S_k^{m,n}$ for some $k=3,4$ and $m,n \geq 0$;
                        \item $S_k^{m,n,p}$ for some $k=5,6$ and $m,n,p \geq 0$.
                    \end{itemize}
                \item \label{thm:special_trees_2} $G$ does not contain $P_9$ or the graphs $X_1, X_2, X_3,X_4$ displayed in \cref{fig:forbidden_subgraphs} as an induced subgraph.
            \end{enumerate}
    \end{lemma}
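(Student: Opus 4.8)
The plan is to establish the two implications separately. The forward direction \ref{thm:special_trees_1}$\Rightarrow$\ref{thm:special_trees_2} is a direct verification on the three explicit families, whereas the reverse direction \ref{thm:special_trees_2}$\Rightarrow$\ref{thm:special_trees_1} carries essentially all of the content. For the reverse direction I would analyze $G$ by way of a longest path, using \cref{thm:Extension-of-connected-subgraphs} to realize any ``bad'' connected configuration inside an induced subtree of the precise size needed to exhibit one of the forbidden graphs.

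For \ref{thm:special_trees_1}$\Rightarrow$\ref{thm:special_trees_2}, I would inspect each family in turn. Every listed graph is a caterpillar whose diameter is at most $7$, as can be read off from its spine, so none contains an induced $P_9$. For the graphs $X_1,\dots,X_4$ of \cref{fig:forbidden_subgraphs}, the key point is that the parameters $k,m,n,p$ only control how many pendant leaves hang from a fixed set of vertices: adding leaves neither creates a new vertex of degree at least three nor increases the distance between existing ones. Hence whether a fixed tree $X_i$ occurs as an induced subgraph is settled by a finite comparison of the number and relative positions of the degree-$\ge 3$ vertices, and this comparison rules out each $X_i$ for every choice of parameters.

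For \ref{thm:special_trees_2}$\Rightarrow$\ref{thm:special_trees_1}, assume $G$ is a tree avoiding all of $P_9, X_1,\dots,X_4$ as induced subgraphs. Because a longest path in a tree is induced, forbidding $P_9$ forces $\operatorname{diam}(G) \le 7$; fix a longest path (a \emph{spine}) $v_1 v_2 \cdots v_\ell$ with $\ell \le 8$, noting that $v_1$ and $v_\ell$ must be leaves. Every other vertex of $G$ hangs off some interior spine vertex through a unique subtree. The crux is to distill two structural facts from the forbidden graphs. First, each hanging subtree is a single pendant leaf: a branch of depth at least two, taken together with a long enough stretch of the spine, yields an induced $P_9$ or one of the $X_i$ (located precisely using \cref{thm:Extension-of-connected-subgraphs}), so $G$ is a caterpillar. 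Second, the interior vertices that carry leaves---the \emph{branch vertices}---number at most three and lie within a short central window of the spine, since a fourth branch vertex, or two branch vertices too far apart, again produces one of the $X_i$.

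Given these reductions, $G$ is a caterpillar with a bounded spine and at most three suitably placed branch vertices, and I would conclude by a case analysis on the number of branch vertices together with the diameter: no branch vertex or a single one gives a star $S_k$; two branch vertices pin the diameter down to the value matching $k\in\{3,4\}$ and produce $S_k^{m,n}$; and three branch vertices pin it down to the value matching $k\in\{5,6\}$ and produce $S_k^{m,n,p}$, with $m,n,p$ recording the leaf multiplicities. The main obstacle is exactly this final analysis: one must check that $P_9$ together with the four graphs $X_1,\dots,X_4$ is \emph{precisely} enough to exclude every tree outside the three families---in particular to fix the admissible branch-vertex positions and the tight coupling between the branch-vertex count and the diameter---and one must separately confirm that all degenerate cases (the single vertex and single edge, the short paths arising when all leaf parameters vanish, and small values of $k$) indeed lie within the listed families.
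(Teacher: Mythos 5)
Your skeleton matches the paper's: prove \ref{thm:special_trees_1}$\Rightarrow$\ref{thm:special_trees_2} by inspection, and for the converse use $P_9$ to bound the diameter by $7$, fix a longest (hence induced) path, and constrain what can attach to it via the forbidden graphs; the paper organizes this as a case analysis on the diameter, from $2$ up to $7$. Your first two reductions are fine: the caterpillar claim follows because a depth-two branch at a spine vertex either contradicts maximality of the spine or produces the three-legged spider $X_2$, and ``at most three branch vertices'' follows from $X_1$ (no two adjacent vertices of degree at least three) together with the spine having at most $8$ vertices.

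The genuine gap is in the criteria you propose to finish the classification: that the branch vertices must lie in ``a short central window'' and that two branch vertices ``too far apart'' force an $X_i$. Neither is consistent with the statement you are proving. The graph $S_5^{2,0,2}$ is in the allowed list, yet its two branch vertices sit adjacent to the two \emph{ends} of its $7$-vertex spine, at distance four from each other --- neither central nor close together. Conversely, the tree obtained from $P_7$ by attaching one pendant leaf at its third vertex and one at its fifth is a caterpillar with only two branch vertices, centrally placed and at distance two, yet it is isomorphic to none of the graphs in \ref{thm:special_trees_1} (compare branch-vertex positions with $S_5^{m,n,p}$ and $S_6^{0,n,p}$, the only families of diameter six); since it contains no $P_9$, no $X_1$, and no $X_2$, the lemma forces it to contain $X_3$ or $X_4$. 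Even a \emph{single} branch vertex is position-sensitive: on an $8$-vertex spine, a pendant leaf at the second or fourth vertex yields $S_6^{2,0,1}$ or $S_6^{1,1,1}$ (allowed), while a pendant leaf at the third vertex yields a spider lying outside every family. So admissibility is governed by the exact distances of each branch vertex to \emph{both} spine ends, not by counts, windows, or pairwise separation; this delicate pattern is exactly what $X_3$ and $X_4$ encode and what the paper's diameter-by-diameter gluing analysis pins down. Since you explicitly defer this final analysis as ``the main obstacle'' while supplying heuristics that would steer it to a wrong classification, the proof as planned does not close.
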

    
   \begin{figure}[H]
        \centering
        
        \begin{subfigure}[b]{0.14\textwidth}
            \centering
            \includegraphics[width=0.5\textwidth]{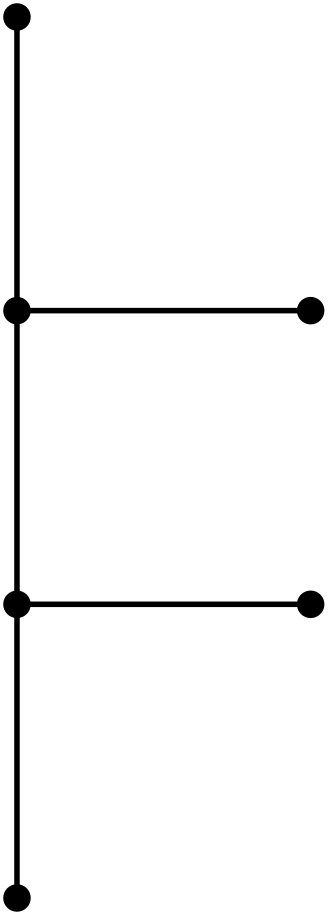}
            \caption{\(X_1\)}
        \end{subfigure}
        \hspace{7pt}
        \begin{subfigure}[b]{0.14\textwidth}
            \centering
            \includegraphics[width=\textwidth]{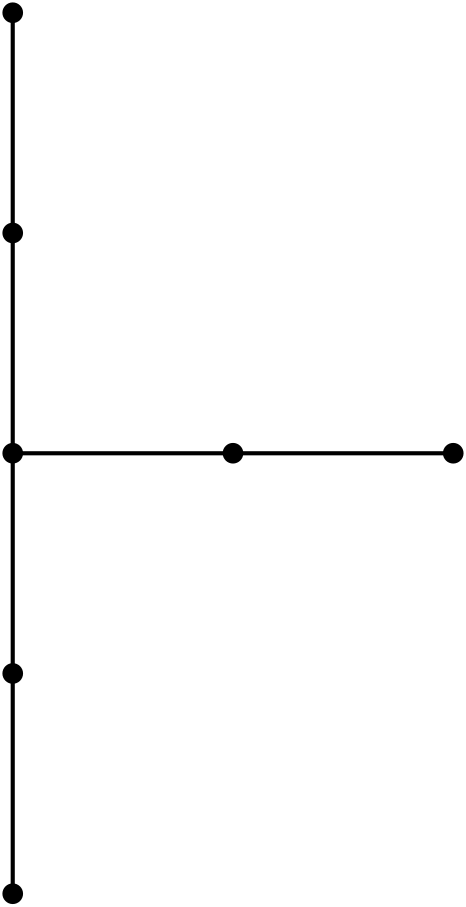}
            \caption{\(X_2\)}
        \end{subfigure}
        \hspace{7pt}
        \begin{subfigure}[b]{0.14\textwidth}
            \centering
            \includegraphics[width=.5\textwidth]{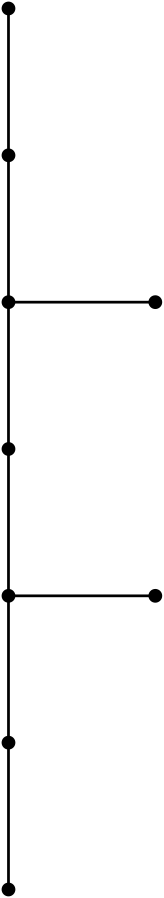}
            \caption{\(X_3\)}
        \end{subfigure}
        \hspace{7pt}
        \begin{subfigure}[b]{0.14\textwidth}
            \centering
            \includegraphics[width=.5\textwidth]{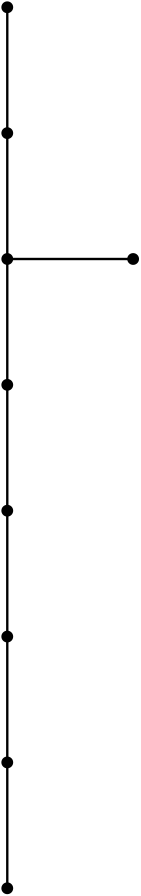}
            \caption{\(X_4\)}
        \end{subfigure}
        
        \caption{Forbidden subgraphs in \protect{\cref{thm:special_trees}}.}
        \label{fig:forbidden_subgraphs}
    \end{figure}
    
    \begin{proof}
        First, note that any connected subgraph of a tree is automatically an induced subgraph. The quantifier ``induced'' in \ref{thm:special_trees_2} is included only for emphasis as we will primarily be concerned with induced subgraphs.

        The proof that \ref{thm:special_trees_1} implies \ref{thm:special_trees_2} is a straightforward verification; see \cref{fig:S_graphs} and \cref{fig:forbidden_subgraphs}. So, we will assume that $G$ satisfies \ref{thm:special_trees_2} and show that it must have one of the forms given in \ref{thm:special_trees_1}. Note that since $G$ does not contain $P_9$, its diameter must not exceed $7$.

        If the diameter of $G$ is at most two, then $G$ must be a star graph $S_k$ for some $k \ge 0$. If the diameter of $G$ is three, then $G$ must contain $P_4$ as a subgraph.  Suppose that this path connects the vertices $v_1, v_2, v_3, v_4$, in that order. Then $G$ is built up from $P_4$ by gluing vertices of trees to one or more of the vertices $v_i$. Nothing can be glued to either $v_1$ or $v_4$, since $G$ would then have diameter greater than three in that case. If non-empty trees are glued to both $v_2$ and $v_3$, then $X_1 \subseteq G$, a contradiction. Thus, $G$ is obtained by gluing trees to one of $v_2$ or $v_3$. Furthermore, these trees must be leaves, since the diameter of $G$ would exceed $3$ otherwise. It follows that $G = S_3^{m, 0}$ for some $m \geq 1$. 

        If the diameter of $G$ is four, then $G$ contains $P_5$ as a subgraph. If this path connects the vertices $v_1,v_2,v_3,v_4,$ and $v_5$, in that order, then $G$ is built by gluing trees to some of $v_2,v_3$, and $v_4$. If non-empty trees are glued to both of $v_2, v_3$ or both of $v_3, v_4$, then $X_1 \subseteq G$. So trees can only be glued to $v_3$ or to one or both of $v_2, v_4$. In either case, the glued trees must be leaves: if anything else is glued to $v_3$, then $G$ would contain $X_2$, while gluing a non-leaf to $v_2$ or $v_4$ would cause $G$ to have diameter exceeding $4$. Thus, $G$ must be either $S_5^{0, n, 0}$ for some $n \geq 0$ or $S_3^{m, n}$ for some $m, n \geq 1$.

        The cases of diameter $5,6,$ and $7$ follow with similar reasoning. In particular, if $G$ has diameter five, then $G = S_4^{m,n}$ for some $m,n \ge 1$, or $G = S_5^{m,n,0}$ for some $m \ge 1, n \ge 0$. If $G$ has diameter six, then $G = S_5^{m,n,p}$ for some $m\ge 1, n\ge 0, p\ge 1$, or $G = S_6^{0,n,p}$ for some $n\ge 0, p\ge 1$. Finally, if $G$ has diameter seven, then $G = S_6^{m,n,p}$ for some $m,p \ge 1$ and $n \ge 0$.
    \end{proof}

\section{The Scarf complex of a connected ideal}\label{sec:connected_ideals}

    Let $G$ be a connected graph with vertices $V(G) = \{x_1,x_2,\dots,x_d\}$ and $S = \Bbbk[x_1,x_2,\dots,x_d]$ for a field $\Bbbk$. The $t$-connected ideal associated to $G$, defined in \cref{def:path_conn_intro}\ref{def:conn_intro}, is the ideal generated by all induced subgraphs of $G$ which have $t$ vertices and are connected. We denote this ideal by $\mc C_t(G)$.

    Notice that the $2$-connected ideal $\mc C_2(G)$ agrees with the standard definition of the edge ideal of $G$. The main goal of this section is to characterize, for fixed $t > 2$, the connected graphs $G$ such that $\mc C_t(G)$ is Scarf. For $t = 2$, that is, for edge ideals, such a characterization was given in \cite{faridi2024scarf}.

    \begin{theorem}[\protect{\cite[Theorem 8.3]{faridi2024scarf}}]\label{thm:faridi_edge}
        Let $G$ be a connected graph. Then the edge ideal of $G$ is Scarf if and only if $G$ is a tree of diameter not exceeding three.
    \end{theorem}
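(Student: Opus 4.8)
The plan is to prove the two implications separately, using the restriction machinery of \cref{sec:setup} for the forward direction and a direct analysis of the Scarf complex for the converse. Throughout, I would use the fact that a connected tree of diameter at most three is either a star $S_k$ (diameter $\le 2$, including the single edge $S_1$) or a \emph{double star}: two adjacent vertices $a,b$ with $a$ carrying leaves $u_1,\dots,u_m$ and $b$ carrying leaves $w_1,\dots,w_n$, where $m,n\ge 1$.

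For the forward direction I would argue by forbidden induced subgraphs. First I claim that a connected graph $G$ is a tree of diameter at most three if and only if it contains none of $C_3, C_4, C_5, P_5$ as an induced subgraph. The nontrivial direction uses that a shortest cycle of a graph is chordless, hence induced; if this cycle is $C_n$ with $n\ge 6$, then deleting a vertex yields an induced $P_{n-1}$, which contains an induced $P_5$. Thus forbidding $C_3,C_4,C_5,P_5$ forces $G$ to be acyclic, and a tree with no induced $P_5$ has no path of length four, i.e.\ diameter at most three. It then suffices to check that the four edge ideals $\mc C_2(C_3), \mc C_2(C_4), \mc C_2(C_5), \mc C_2(P_5)$ are not Scarf and to invoke \cref{lem:Restriction-Scarf}. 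The ideal $\mc C_2(C_3)$ fails by \cref{lem:scarf-2-gens} (three generators of degree two in three variables). For the other three I would compute the Scarf complex explicitly: in each case the faces of dimension $\ge 2$ acquire non-unique lcm labels and disappear, leaving a one-dimensional complex that retains an embedded cycle (a $4$-gon for $C_4$ and $P_5$, a $5$-gon for $C_5$). Such a complex has $\widetilde H_1 \neq 0$, so by \cref{lem:BPS} it cannot support a resolution.

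For the converse I would show directly that stars and double stars are Scarf. For a star $S_k$ with center $c$, every subset $T$ of the generators $\{c\ell : \ell \text{ a leaf}\}$ has lcm $c\prod_{\ell \in T}\ell$, so distinct subsets carry distinct labels; hence $\Scarf\bigl(\mc C_2(S_k)\bigr) = \taylor\bigl(\mc C_2(S_k)\bigr)$ is a full simplex and supports a (Taylor) resolution. For a double star with edge ideal $I = (ab, au_1,\dots,au_m, bw_1,\dots,bw_n)$, the key computation is that any face containing both an $a$-leaf edge $au_i$ and a $b$-leaf edge $bw_j$ has a non-unique label, because the generator $ab$ then divides the label and can be freely added or removed without changing the lcm. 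Consequently $\Scarf(I) = \Delta_a \cup \Delta_b$, where $\Delta_a$ is the full simplex on $\{ab, au_1,\dots,au_m\}$ and $\Delta_b$ the full simplex on $\{ab, bw_1,\dots,bw_n\}$, glued along the single shared vertex $ab$.

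It remains to verify the criterion of \cref{lem:BPS}. For a monomial $\mu$, both $(\Delta_a)_{\le \mu}$ and $(\Delta_b)_{\le \mu}$ are (possibly empty) simplices, since each $\Delta_\bullet$ is a Taylor complex. Here is the crucial point, which I expect to be the main obstacle to get exactly right: I must rule out $\Scarf(I)_{\le \mu}$ being a disjoint union of two nonempty pieces, since that would produce $\widetilde H_0 \neq 0$. If both $(\Delta_a)_{\le\mu}$ and $(\Delta_b)_{\le\mu}$ are nonempty and do not meet, then some $au_i \mid \mu$ and some $bw_j \mid \mu$ while $ab \nmid \mu$; but $au_i \mid \mu$ and $bw_j \mid \mu$ force $a \mid \mu$ and $b \mid \mu$, hence $ab \mid \mu$ by square-freeness, a contradiction. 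Therefore either one of the two simplices is empty, so $\Scarf(I)_{\le\mu}$ is a simplex, or $ab \mid \mu$ and both simplices contain the vertex $ab$, so $\Scarf(I)_{\le\mu}$ is two contractible simplices wedged at a point. In every case $\Scarf(I)_{\le\mu}$ is empty or contractible, hence acyclic, and \cref{lem:BPS} shows that $I$ is Scarf.
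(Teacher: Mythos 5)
Your proof is correct, but note that this paper does not prove the statement at all: \cref{thm:faridi_edge} is imported from Faridi et al.\ \cite[Theorem 8.3]{faridi2024scarf} and used as a black box, so there is no in-paper argument to compare against. What you have written is a self-contained proof, and it closely mirrors the strategy the paper uses for its \emph{own} theorems (\cref{thm:connected-scarf-iff-short-path} and \cref{thm:P_4_Scarf}): for the forward direction, a finite list of forbidden induced subgraphs ($C_3$ killed by \cref{lem:scarf-2-gens}; $C_4$, $C_5$, $P_5$ killed by explicit Scarf-complex computations showing the complex collapses to a $4$- or $5$-gon with $\widetilde{H}_1 \neq 0$) combined with \cref{lem:Restriction-Scarf}; for the converse, an explicit description of the Scarf complex of stars and double stars followed by the criterion of \cref{lem:BPS}. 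Your graph-theoretic reduction is sound (a shortest cycle is chordless hence induced; an induced $C_n$ with $n \ge 6$ yields an induced $P_5$; a tree with no induced $P_5$ has diameter at most three), and the decisive point in the double-star case --- that $au_i \mid \mu$ and $bw_j \mid \mu$ force $ab \mid \mu$, so the two simplices $(\Delta_a)_{\le\mu}$ and $(\Delta_b)_{\le\mu}$ can never be simultaneously nonempty and disjoint --- is exactly right, and gives contractibility of $\Scarf(I)_{\le\mu}$ in every case.

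One step you should state rather than leave implicit: your ``consequently'' only establishes the inclusion $\Scarf(I) \subseteq \Delta_a \cup \Delta_b$, whereas your verification of \cref{lem:BPS} uses the exact equality $\Scarf(I) = \Delta_a \cup \Delta_b$. The reverse inclusion does hold, but it needs the observation that the label of any subset of $\{ab, au_1,\dots,au_m\}$ is divisible by no $w_j$, so any other subset with the same label also lies in $\{ab, au_1,\dots,au_m\}$, and there the label determines membership of each generator ($ab$ appears iff $b$ divides the label, and $au_i$ appears iff $u_i$ does). This is a two-line check and does not affect correctness, but without it the case analysis of $\Scarf(I)_{\le\mu}$ is not fully justified.
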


    We begin by considering the connected ideals of paths and cycles.

    \begin{proposition}\label{thm:Scarf_paths_and_cycles}
        Let $r,t\geq 3$ be integers. Then the following hold.
            \begin{enumerate}[label = (\roman*)]
                \item \label{thm:Scarf_paths_and_cycles_1} $\mc C_t(P_r)$ is Scarf if and only if $r \leq 2t$.
                \item \label{thm:Scarf_paths_and_cycles_2} $\mc C_t(C_r)$ is Scarf if and only if $r \leq t$.
            \end{enumerate}
    \end{proposition}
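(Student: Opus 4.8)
The plan is to work throughout with the explicit description of the generators. Writing the vertices of $P_r$ in order as $x_1,\dots,x_r$, the generators of $\mc C_t(P_r)$ are the ``windows'' $m_i = x_ix_{i+1}\cdots x_{i+t-1}$ for $1\le i\le q\coloneqq r-t+1$, so that every $\lcm$ is the product of the variables in a union of the intervals $[i,i+t-1]$; for $C_r$ the same holds with intervals replaced by cyclic arcs and with $r$ generators $m_1,\dots,m_r$. The engine of the argument will be a combinatorial description of the faces of $\Scarf$: a set $\sigma$ of generators is a face precisely when it is the \emph{full} set of generators whose interval (arc) is contained in $L\coloneqq\lcm(\sigma)$ and, in addition, every generator of $\sigma$ covers a variable covered by no other generator of $\sigma$ (a ``private'' variable). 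Decomposing $L$ into its maximal intervals (arcs), this is equivalent to saying that within each connected block of $L$ at most two generators appear, since three consecutive windows always make the middle one redundant; a block carrying exactly two generators is an interval of length $t+1$, i.e.\ an adjacent pair. I expect establishing this characterization to be the first real step, after which everything reduces to counting.

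For part (i), I would first treat $r\le 2t$, i.e.\ $q\le t+1$. Then any two windows $m_i,m_j$ satisfy $|i-j|\le t$, so their intervals always touch and $L$ is a single interval; the characterization forces $\Scarf\bigl(\mc C_t(P_r)\bigr)$ to be exactly the path $m_1-m_2-\cdots-m_q$ (only adjacent pairs survive and no triangle can occur). For any monomial $m$, the same touching property shows that the windows dividing $m$ form an interval of indices with all intervening edges also dividing $m$, so $\Scarf_{\le m}$ is a subpath, hence connected or empty, and \cref{lem:BPS} gives that $\mc C_t(P_r)$ is Scarf. For the converse, when $r=2t+1$ one has $q=t+2$ windows and I would check directly that the complex is the $(t+2)$-cycle: the adjacent edges $\{m_i,m_{i+1}\}$ together with the single separated edge $\{m_1,m_{t+2}\}$ (arcs $[1,t]$ and $[t+2,2t+1]$, whose label omits $x_{t+1}$), and no triangle, because any third window fills the gap at $x_{t+1}$ and collapses the label to a single interval. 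Since $\tilde H_1\neq 0$, this cannot support a resolution. For $r>2t+1$ the graph $P_{2t+1}$ is an induced subgraph of $P_r$, so \cref{lem:Restriction-Scarf} finishes part (i).

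For part (ii), the cases $r\le t$ are immediate, as $\mc C_t(C_r)$ has at most one generator, and the case $r=t+1$ is handled by \cref{lem:scarf-2-gens}, since $\mc C_t(C_{t+1})$ is generated by the $t+1\ge 3$ degree-$t$ arcs in $t+1$ variables. The heart of the matter is the range $t+2\le r\le 2t+1$, which the restriction lemmas do not reach: every induced subgraph of $C_r$ on $t+1$ vertices is a disjoint union of paths carrying at most two generators, and $P_{2t+1}$ appears as an induced subgraph only once $r\ge 2t+2$. Here I would again compute $\Scarf$ directly. The adjacent pairs $\{m_i,m_{i+1}\}$ have arcs of length $t+1<r$ and so are genuine Scarf edges forming an $r$-cycle; the characterization leaves, within any single arc, only adjacent pairs, while a face meeting $k\ge 2$ distinct arcs of $L$ would require $k$ pairwise disjoint length-$t$ arcs separated by gaps, forcing $r\ge k(t+1)\ge 2t+2$, which is impossible in this range (so there are neither ``long'' edges nor triangles). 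Thus $\Scarf\bigl(\mc C_t(C_r)\bigr)$ is the $r$-cycle, which has $\tilde H_1\neq 0$; since the whole Scarf complex must be acyclic over $\Bbbk$ to support a resolution, $\mc C_t(C_r)$ is not Scarf. Finally, for $r\ge 2t+2$, the induced subgraph $P_{2t+1}$ together with \cref{lem:Restriction-Scarf} and part (i) gives the result.

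The step I expect to be the main obstacle is the direct analysis of the cycle in the middle range $t+2\le r\le 2t+1$: unlike the path case and the large-$r$ cycle case, there is no forbidden induced subgraph to restrict to, so one must pin down the entire Scarf complex and verify it is a single $r$-cycle. The key quantitative input is that disjoint length-$t$ arcs with separating gaps cannot fit into a cycle this short, which is precisely what collapses the complex onto its $1$-skeleton. Getting the private-variable/block characterization exactly right is where the care is needed, including the subtlety that for $r=t+1$ the labels of adjacent pairs ``wrap'' and all coincide with the product of every variable; this is why that case must be peeled off and treated separately via \cref{lem:scarf-2-gens}.
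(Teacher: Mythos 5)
Your proposal is correct and follows essentially the same route as the paper: an explicit determination of the Scarf complexes (a path for $\mc C_t(P_r)$ with $r \leq 2t$, the boundary of a $(t+2)$-gon for $P_{2t+1}$, and an $r$-gon for $\mc C_t(C_r)$ with $t+2 \leq r \leq 2t+1$), combined with \cref{lem:BPS}, \cref{lem:scarf-2-gens} for the case $r=t+1$, and \cref{lem:Restriction-Scarf} for the large cases. The only real difference is organizational: your ``fullness plus private variable'' block characterization of Scarf faces systematizes what the paper accomplishes by direct case analysis on the lcm intervals (its cases $b-a=t-1$, $b-a=t$, $b-a>t$), and in fact spells out the middle cycle range $t+2 \leq r \leq 2t+1$ in more detail than the paper, which only asserts it is ``similar'' to the path computation.
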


    \begin{proof}
        To prove the ``if" direction of ~\ref{thm:Scarf_paths_and_cycles_1}, assume $r \leq 2t$. If $r\leq t+1$, then $\mc C_t(P_r)$ is minimally generated by at most 2 generators, and therefore is Scarf. So, we may assume that $t+2\leq r \leq 2t$.

        For each $1 \leq i \leq r-t+1$, let $m_i = x_i x_{i+1} \cdots x_{i+t-1}$. Then $\mingens\bigl(\mc C_t(P_r)\bigr) = \{m_1,m_2,\dots,m_{r-t+1}\}$ is the minimal set of monomial generators for $\mc C_t(P_r)$. Since $t + 2 \leq r \leq 2t$, for any $i \leq j$, the monomials $m_i$ and $m_j$ share at least one variable, or account for \emph{all} of the variables $x_1, x_2, \dots, x_r$ (the former holds unless $r = 2t, i = 1,$ and $j = t + 1$, in which case the latter holds). It follows that $\lcm(m_i, m_j) = x_i x_{i+1} \cdots x_{j+t-1}$. This observation allows us to compute the lcm of larger subsets as well: if $\sigma$ is a non-empty subset of $\mingens\bigl(\mc C_t(P_r)\bigr)$, then $\lcm(\sigma) = x_a x_{a+1}\cdots x_b$ for some $1\leq a<b\leq r$ with $ b-a\geq t-1$. We have the following three cases:
        \begin{enumerate}[label = (\alph*)]
            \item If $b-a=t-1$, then the only monomial in $\mingens\bigl(\mc C_t(P_r)\bigr)$ that divides $\lcm(\sigma)$ is $m_a$, and this forces $\sigma=\{m_a\}$. In particular, the label of $\sigma$ is unique, that is, $\sigma \in \Scarf\bigl(\mc C_t(P_r)\bigr)$.
            \item If $b-a=t$, then the only monomials in $\mingens\bigl(\mc C_t(P_r)\bigr)$ that divide $\lcm(\sigma)$ are $m_a$ and $m_{a+1}$, and this forces $\sigma=\{m_a,m_{a+1}\}$. Again, the label of $\sigma$ is unique.
            \item If $b-a>t$, then we observe that
                \[\lcm(m_{a},m_{b-t+1}) = x_a x_{a+1}\cdots x_b = \lcm(m_{a},m_{a+1},m_{b-t+1}),\]
            and $a+1 \neq b-t+1$. So, the label $x_a x_{a+1}\cdots x_b$ is not unique and $\sigma \not\in \Scarf\bigl(\mc C_t(P_r)\bigr)$. 
        \end{enumerate}

        Putting this all together, we have that $\Delta = \Scarf\bigl(\mc C_t(P_r)\bigr)$ is a path connecting the vertices $m_1, m_2, \dots, m_{r - t +1}$. Moreover, for any monomial $m$, if two vertices $m_i$ and $m_j$ with $i < j$ divide $m$, then the vertices $m_{i+1}, m_{i+2}, \dots, m_{j-1}$ also divide $m$. It follows that $\Delta_{\leq m}$ contains each of the vertices $m_i, m_{i+1}, \dots, m_{j-1}, m_j$ and the edges connecting them, so $\Delta_{\leq m}$ is also a path. Thus, every subcomplex of $\Delta$ of the form $\Delta_{\leq m}$ is empty or acyclic over $\Bbbk$. By \cref{lem:BPS}, the ideal $\mc C_t(P_r)$ is Scarf. 
        
        For the converse, first notice that it suffices to prove that $I = \mc C_t(P_{2t+1})$ is not Scarf; the general case then follows from \cref{lem:Restriction-Scarf}. Using similar reasoning and notation as above, one can verify that the Scarf complex of $I$ is a $t+2$ sided polygon without its interior. This complex has non-trivial first homology over $\Bbbk$ and therefore $\Scarf(I)$ does not support a resolution of $I$. This completes the proof of \ref{thm:Scarf_paths_and_cycles_1}.
        
        To see that \ref{thm:Scarf_paths_and_cycles_2} holds, we first consider the case when $r=t+1$. The ideal $\mc{C}_t(C_{r}) = \mc C_t(C_{t+1})$ has $t+1$ generators and therefore is not Scarf by \cref{lem:scarf-2-gens}. For $t + 2 \leq r \leq 2t + 1$, the simplicial complex on which the Scarf complex of $\mc C_t(C_r)$ is supported is an $r$-sided polygon without interior; the verification of this is similar to \ref{thm:Scarf_paths_and_cycles_1} above. For $r > 2t + 1$, the result follows from \ref{thm:Scarf_paths_and_cycles_1} combined with \cref{lem:Restriction-Scarf}.    
    \end{proof}

    We are now ready to prove the main result of this section. 
    
    \begin{theorem}[\cref{thm:A}]\label{thm:connected-scarf-iff-short-path}
        Let $G$ be a connected graph and $t \geq 3$. Then $\mc C_t(G)$ is Scarf if and only if $G$ has at most $t$ vertices or $G = P_r$ for some $r \leq 2t$.
    \end{theorem}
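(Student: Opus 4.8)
The plan is to prove both directions by reducing everything to the results already established for paths and cycles (\cref{thm:Scarf_paths_and_cycles}) and to the two-generator criterion (\cref{lem:scarf-2-gens}), letting the structural lemmas of \cref{sec:forbidden_subgraphs} handle the combinatorics.

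For the ``if'' direction I would argue as follows. If $G$ has at most $t$ vertices, then the only possible connected induced subgraph on $t$ vertices is $G$ itself (and only when $\lvert V(G) \rvert = t$), so $\mc C_t(G)$ is either zero or principal, and hence trivially Scarf. If $G = P_r$ with $r \le 2t$, then either $r \le t$ (already covered above) or $t < r \le 2t$, in which case $r \ge 4$ and \cref{thm:Scarf_paths_and_cycles}\ref{thm:Scarf_paths_and_cycles_1} gives that $\mc C_t(P_r)$ is Scarf.

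For the ``only if'' direction I would prove the contrapositive: assuming $G$ has more than $t$ vertices and is not a path $P_r$ with $r \le 2t$, I would show $\mc C_t(G)$ is not Scarf, splitting into three cases via \cref{thm:cycle_path_equiv}. If $G = P_r$, the hypotheses force $r > 2t$, so \cref{thm:Scarf_paths_and_cycles}\ref{thm:Scarf_paths_and_cycles_1} applies; if $G = C_r$, then $r = \lvert V(G) \rvert > t$, so \cref{thm:Scarf_paths_and_cycles}\ref{thm:Scarf_paths_and_cycles_2} applies. These two subcases require no new work.

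The crux is the case where $G$ is neither a path nor a cycle. Here \cref{thm:cycle_path_equiv} guarantees that $G$ contains $S_3$ as a subgraph. Since $\lvert V(G) \rvert > t \ge 3$, we have $4 = \lvert V(S_3) \rvert \le t + 1 \le \lvert V(G) \rvert$, so \cref{thm:Extension-of-connected-subgraphs} produces a connected induced subgraph $H$ on exactly $t+1$ vertices containing $S_3$. Because $H$ then has a vertex of degree at least three, it is not a path, and \cref{thm:three_verticies} yields at least three vertices $v$ for which $H - v$ is connected. Each such $H - v$ is a distinct connected induced subgraph of $H$ on $t$ vertices, hence a distinct square-free generator of $\mc C_t(H)$ of degree $t$ in the $t+1$ variables indexing $V(H)$; thus $\mc C_t(H)$ has at least three generators, so \cref{lem:scarf-2-gens} shows it is not Scarf, and \cref{lem:Restriction-Scarf} then forces $\mc C_t(G)$ to not be Scarf. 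I expect the main obstacle to be precisely this reduction --- recognizing that a vertex of degree $\ge 3$ can be promoted, via \cref{thm:Extension-of-connected-subgraphs} and \cref{thm:three_verticies}, to an induced subgraph on $t+1$ vertices with three or more generators --- after which the two-generator criterion finishes the argument. The one routine point to verify is that the distinct deletions $H - v$ give distinct monomial labels, which is clear since they have distinct vertex sets.
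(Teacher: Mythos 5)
Your proposal is correct and follows essentially the same route as the paper: both directions rest on \cref{thm:Scarf_paths_and_cycles}, and the crux case is handled by combining \cref{lem:Restriction-Scarf}, \cref{lem:scarf-2-gens}, and \cref{thm:three_verticies} to show that a connected induced subgraph on $t+1$ vertices must be a path. The only difference is organizational: you argue the contrapositive and inline the $S_3$-plus-\cref{thm:Extension-of-connected-subgraphs} extension argument, whereas the paper argues directly and invokes that same argument through the final statement of \cref{thm:cycle_path_equiv}.
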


    \begin{proof}
        Assume that $G$ has more than $t$ vertices and that the ideal $\mc C_t(G)$ is Scarf. Notice that, by \cref{lem:Restriction-Scarf}, $\mc C_t(G')$ is Scarf for any connected induced subgraph $G'$ of $G$. In particular, any connected induced subgraph with exactly $t+1$ vertices is Scarf and therefore must be a path by \cref{lem:scarf-2-gens} and \cref{thm:three_verticies}. It follows from \cref{thm:cycle_path_equiv} that $G$ itself is either a path or a cycle. Since $\lvert V(G) \rvert > t$ and $\mc C_t(G)$ is Scarf, we find that the only possibility given by \cref{thm:Scarf_paths_and_cycles} is $G = P_r$ for some $t + 1 \leq r \leq 2t$.

        The converse follows immediately from \cref{thm:Scarf_paths_and_cycles}. 
    \end{proof}

\section{The Scarf complex of a path ideal}\label{sec:p4_scarf}

    In this section, we consider a different but related generalization of the edge ideal of a graph, namely, the $t$-path ideal; see \cref{def:path_conn_intro}\ref{def:path_intro}. As above, we consider a connected graph $G$ with vertices $x_1,x_2,\dots,x_d$ and let $S = \Bbbk[x_1,x_2,\dots,x_d]$ where $\Bbbk$ is a field.

    Notice that $\mc P_t(G) \subseteq \mc C_t(G)$ for all $t \geq 2$ and, furthermore, $\mc P_t(G) = \mc C_t(G)$ for $t=2,3$. Thus, \cite[Theorem 8.3]{faridi2024scarf} and \cref{thm:connected-scarf-iff-short-path} characterize the graphs $G$ which have the property that $\mc P_t(G)$ is Scarf for $t = 2,3$. The goal of this section is to tackle the next smallest case, $t=4$. We will say that a graph $G$ is $\mc P_4$-Scarf if the ideal $\mc P_4(G)$ is Scarf.

    \begin{theorem}[\cref{thm:B}]\label{thm:P_4_Scarf}
        Let $G$ be a connected graph. Then $G$ is $\mc P_4$-Scarf if and only if $\lvert V(G) \rvert \leq 4$ or $G$ is isomorphic to one of the graphs listed in \cref{fig:S_graphs}:
            \begin{itemize}
                \item $T_k$ for $k \geq 0$;
                \item $S_k$ for $k \geq 0$;
                \item $S_k^{m,n}$ for $k=3,4$ and $m,n \geq 0$;
                \item $S_k^{m,n,p}$ for $k=5,6$ and $m,n,p \geq 0$.
            \end{itemize}
    \end{theorem}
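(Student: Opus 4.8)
The plan is to follow the same two-part strategy used for Theorem~\ref{thm:connected-scarf-iff-short-path}: combine the restriction lemmas to pin down which graphs can possibly be $\mc P_4$-Scarf (the ``only if'' direction via forbidden subgraphs), and then verify directly that each graph on the list actually is $\mc P_4$-Scarf (the ``if'' direction via \cref{lem:BPS}). The graph-theoretic classification in \cref{thm:special_trees} is clearly engineered to be the engine of the forward direction, so the first thing I would do is establish that a $\mc P_4$-Scarf graph with more than four vertices must be a tree, and in fact a tree avoiding $P_9$ and each of $X_1,X_2,X_3,X_4$.

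For the ``only if'' direction, I would argue as follows. Suppose $G$ is connected with $\lvert V(G)\rvert > 4$ and $\mc P_4(G)$ is Scarf. By \cref{thm:rest_lem_subgraph}, for every subgraph $H$ of $G$ with exactly five vertices, $\mc P_4(H)$ has at most two minimal generators (equivalently, $H$ contains at most two $4$-paths). First I would use this local count to show $G$ is a forest: any cycle $C_n$ with $n \geq 4$, or a triangle with an attached edge, produces a five-vertex subgraph with at least three $4$-paths, contradicting \cref{lem:scarf-2-gens}. Connectedness then forces $G$ to be a tree. Next, I would rule out the forbidden subgraphs: by the restriction lemma \cref{lem:Restriction-Scarf}, if any of $P_9, X_1, X_2, X_3, X_4$ occurred as an \emph{induced} subgraph, then its $\mc P_4$-ideal would be Scarf; so the key computational lemma here is to check directly (by computing the Scarf complex and exhibiting nontrivial reduced homology) that none of these five graphs is $\mc P_4$-Scarf. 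Having excluded them, \cref{thm:special_trees} immediately identifies $G$ as one of $T_k, S_k, S_k^{m,n}, S_k^{m,n,p}$. The appearance of $T_k$ (absent from \cref{thm:special_trees}) must be handled as an exceptional family — presumably $T_k$ is a caterpillar-type tree that technically contains one of the $X_i$ or $P_9$ only non-induced, so I would treat it separately and confirm it survives the forbidden-subgraph analysis.

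For the ``if'' direction, I would verify that each listed family is genuinely $\mc P_4$-Scarf using \cref{lem:BPS}. For each graph, I would enumerate $\mingens(\mc P_4(G))$, compute the lcm labels, extract $\Scarf(\mc P_4(G))$ by discarding non-unique labels, and then show that every restriction $\Scarf(\mc P_4(G))_{\leq m}$ is empty or acyclic over $\Bbbk$. The families $S_k^{m,n}$ and $S_k^{m,n,p}$ are highly symmetric — the $4$-paths pass through a short central spine with pendant leaves — so I expect the Scarf complex to decompose into pieces that are cones or disjoint simplices, making acyclicity of the restrictions transparent. The cleanest argument likely proceeds by induction on the number of pendant leaves: adding a leaf to the central structure adds generators whose lcm labels interact with the existing ones in a controlled (coning) way, so $\Scarf$ grows by a cone and stays acyclic.

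The main obstacle I anticipate is the ``if'' direction bookkeeping rather than any single conceptual difficulty: the parametrized families have variable numbers of generators, so one cannot simply compute a finite Scarf complex and be done. Controlling the lcm labels uniformly in $m,n,p$ — in particular proving that \emph{exactly} the intended faces survive as having unique labels, and that no unexpected non-unique collisions appear among leaf-indexed generators — is the delicate part. I would isolate this as a labeling lemma, showing that two generators of $\mc P_4(G)$ have comparable lcm behavior determined by how their supporting paths overlap on the central spine, and then feed that into \cref{lem:BPS}. The forbidden-subgraph computations for $P_9, X_1,\dots,X_4$ are finite and mechanical, so while tedious, they should not present a genuine obstruction.
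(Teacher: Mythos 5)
Your ``only if'' direction contains a genuine gap that breaks the argument. You claim that any non-forest produces a five-vertex subgraph with at least three $4$-paths (citing cycles $C_n$, $n\ge 4$, and ``a triangle with an attached edge''), and conclude that a $\mc P_4$-Scarf graph on more than four vertices must be a tree. This is false, and the family $T_k$ in the statement is exactly the counterexample: $T_k$ is \emph{not} a tree --- it is a triangle $C_3$ with $k$ leaves attached to a single vertex of the triangle. For instance, $T_2$ (triangle $\{a,b,c\}$ with two leaves $d,e$ at $a$) has exactly two $4$-paths, giving the monomials $abcd$ and $abce$, so \cref{lem:scarf-2-gens} and \cref{thm:rest_lem_subgraph} do not rule it out, and $\mc P_4(T_2)$ is in fact Scarf (its Taylor resolution is minimal). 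Where the fifth vertex attaches matters: a triangle with a pendant path of length two is forbidden, but a triangle with two leaves at the same vertex is not. Your later plan to treat $T_k$ as an exceptional ``caterpillar-type tree'' inside the tree case compounds the error: since $T_k$ contains a cycle, it can never arise from \cref{thm:special_trees}, which classifies trees only. What is missing is the entire non-tree branch of the proof. The paper splits the ``only if'' direction into two cases: when $G$ is a tree it argues exactly as you do (verify that $P_9,X_1,X_2,X_3,X_4$ are not $\mc P_4$-Scarf, then apply \cref{lem:Restriction-Scarf} and \cref{thm:special_trees}); when $G$ is not a tree it runs an induction on $\lvert V(G)\rvert$ whose base case is a finite check that $T_2$ is the only non-tree $\mc P_4$-Scarf graph on five vertices, and whose inductive step shows a minimal cycle $C_\ell$ must be induced, excludes $\ell\ge 5$ by \cref{thm:Scarf_paths_and_cycles} and $\ell=4$ by a forbidden subgraph, extends $C_3$ via \cref{thm:Extension-of-connected-subgraphs} to a connected induced subgraph $G-v\cong T_{t-3}$, and then checks that $v$ can only attach at the leaf-bearing vertex of the triangle, forcing $G\cong T_{t-2}$. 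None of this is recoverable from your forest claim.

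Your ``if'' direction, by contrast, is essentially the paper's approach: the paper proves a leaf-addition lemma (\cref{thm:leaf_lemma}) showing that, under the hypothesis that the star neighborhoods $\starnbd(xn_i,\Gamma)$ are pairwise disjoint, adding a leaf replaces each $\starnbd(xn_j,\Gamma)$ by a cone over it; this preserves the Scarf property (via Mayer--Vietoris applied to the restrictions of \cref{lem:BPS}) \emph{and} preserves the disjointness hypothesis, which is what lets the induction from the computed base cases $S_5^{1,1,1}$ and $S_6^{1,1,1}$ run in all three leaf directions. Your sketch anticipates the coning mechanism, but the disjoint-star-neighborhood condition --- the precise form of the ``labeling lemma'' you say you would need --- is the nontrivial ingredient, and your proposal leaves it unproved. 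Note also the paper's shortcuts: $\mc P_4(S_k)=0$, the ideals of $T_k$ and $S_3^{m,n}$ have minimal Taylor resolutions, and $S_4^{m,n}$ embeds in $S_6^{0,m,n}$, so only $S_5^{m,n,p}$ and $S_6^{m,n,p}$ require the inductive argument.
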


    We will begin by showing that any $\mc P_4$-Scarf graph must be one of the graphs listed above. This will give us one direction of the theorem. Since each graph with four or less vertices is trivially $\mc P_4$-Scarf, we may assume $\lvert V(G) \rvert \geq 5$ throughout the rest of this section.

    \begin{proof}[Proof of \cref{thm:P_4_Scarf} (``only if'' direction)]
        Assuming that $G$ is $\mc P_4$-Scarf, we consider two cases. First, assume that $G$ is a tree. It is readily verified that none of $P_9, X_1,X_2,X_3$ and $X_4$ are $\mc P_4$-Scarf. Thus, $G$ cannot contain any of these as an induced subgraph by \cref{lem:Restriction-Scarf}. Applying \cref{thm:special_trees}, we find that $G$ must be a graph of the form $S_k$, $S_k^{m,n}$, or $S_k^{m,n,p}$. 

        Now suppose that $G$ is not a tree. We proceed by induction on the number of vertices. For the base case, $\lvert V(G) \rvert = 5$, one can verify directly that $T_2$ is the only connected non-tree graph with five vertices which is $\mc P_4$-Scarf; there are 18 connected non-tree graphs to check but the computation can be shortened significantly by using \cref{lem:scarf-2-gens} and \cref{thm:rest_lem_subgraph}. Namely, one can check that the graphs in \cref{fig:forbidden_P_4} are not $\mc P_4$-Scarf using \cref{lem:scarf-2-gens} and then apply \cref{thm:rest_lem_subgraph}.

        \begin{figure}[h]
            \centering

            \begin{subfigure}[b]{0.12\textwidth}
                \centering
                \includegraphics[width=\textwidth]{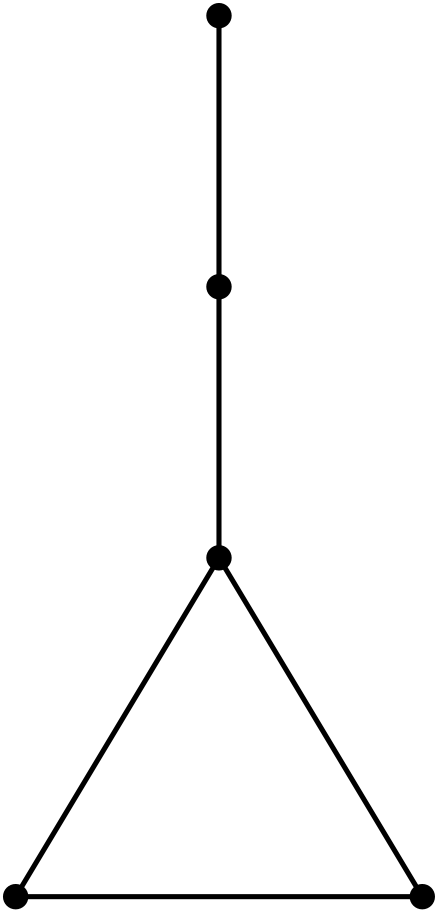}
                \caption{}
            \end{subfigure}
            \hspace{20pt}
            \begin{subfigure}[b]{0.12\textwidth}
                \centering
                \includegraphics[width=\textwidth]{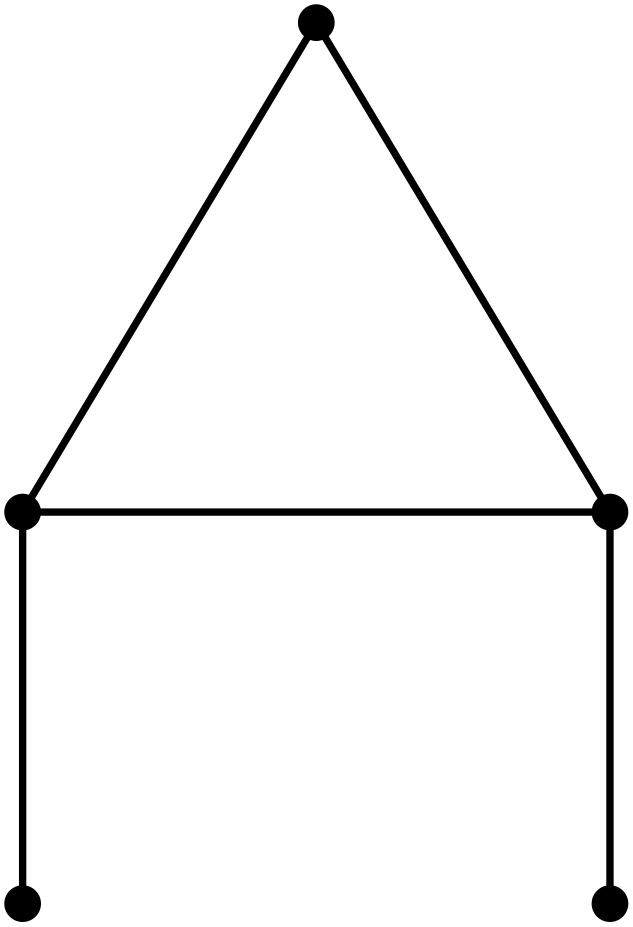}
                \caption{}
            \end{subfigure}
            \hspace{20pt}
            \begin{subfigure}[b]{0.16\textwidth}
                \centering
                \includegraphics[width=\textwidth]{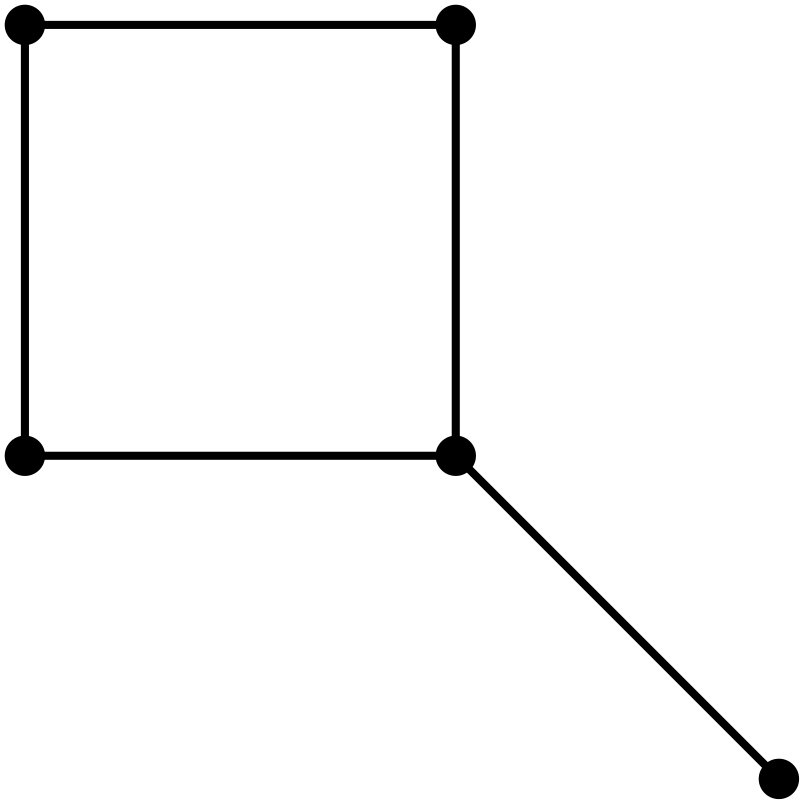}
                \caption{}
            \end{subfigure}
            \hspace{20pt}
            \begin{subfigure}[b]{0.1\textwidth}
                \centering
                \includegraphics[width=\textwidth]{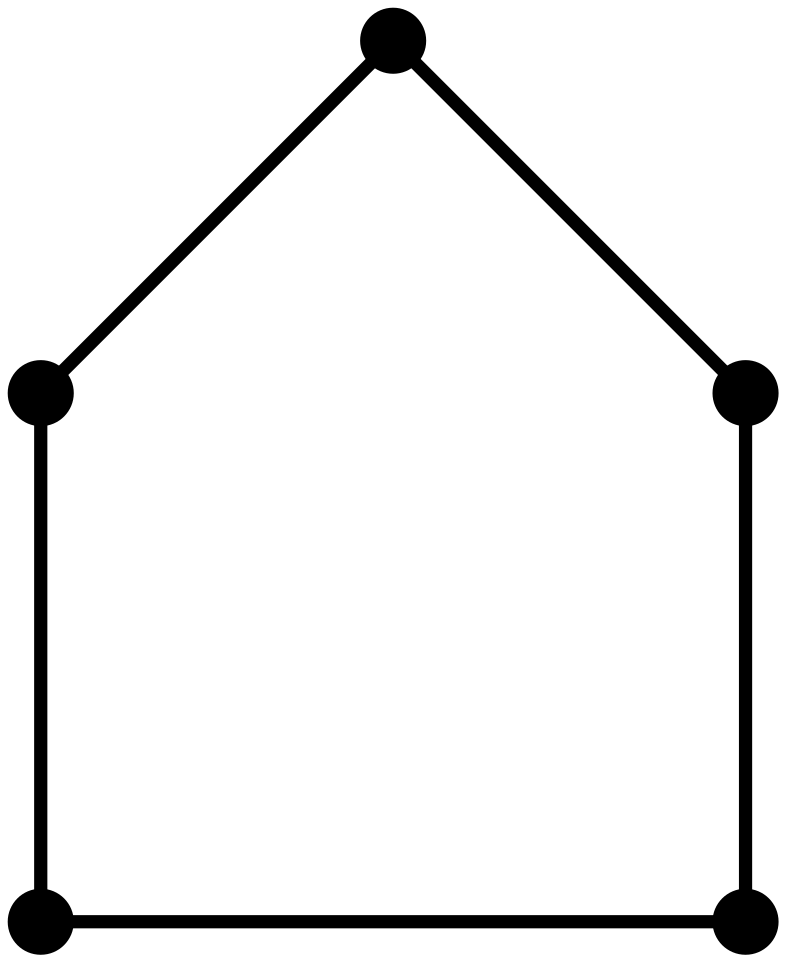}
                \caption{}
            \end{subfigure}

            \caption{Forbidden subgraphs of connected,  \(\mc P_4\)-Scarf, non-tree graphs with more than five vertices.}
            \label{fig:forbidden_P_4}
        \end{figure}
        
        Suppose now that the result is true for all connected non-tree graphs on $t$ vertices for some $t \geq 5$ and suppose $\lvert V(G) \rvert = t + 1$. Since $G$ is not a tree, $C_\ell \subseteq G$ for some $\ell \geq 3$ and we may take this $\ell$ to be minimal in the sense that $C_m \not\subseteq G$ for any $m < \ell$. It follows that $C_\ell$ must be an \emph{induced} subgraph of $G$; there can be no edge in $E(G)$ connecting non-consecutive vertices of $C_\ell$, since otherwise $C_\ell$ would contain a strictly smaller cycle. Furthermore, we must have that $\ell \leq 4$ since \cref{thm:Scarf_paths_and_cycles}\ref{thm:Scarf_paths_and_cycles_2} implies that the ideal $\mc P_4(C_\ell) = \mc C_4(C_\ell)$ is not Scarf for any $\ell \ge 5$. In fact, $\ell=4$ is not possible either since, in that case, $G$ would have to contain a subgraph of the form of ({\sc c}) in \cref{fig:forbidden_P_4}.
        But this contradicts \cref{thm:rest_lem_subgraph}. Thus, $G$ must contain $C_3$ as an induced subgraph. 
        
        By applying \cref{thm:Extension-of-connected-subgraphs}, we may extend $C_3$ to a connected induced subgraph $G - v \subseteq G$ with $t$ vertices. By induction, we have that $G-v = T_{t-3}$. The vertex $v$ must be connected to one or more of the vertices of the subgraph $T_{t-3}$. If $v$ is attached to a leaf, then removing any other leaf results in a connected non-tree graph with $t$ vertices not isomorphic to $T_{t-3}$ (such a leaf must exist since $t-3 \geq 2$). However, this is a contradiction to the inductive assumption since this subgraph must be $\mc P_4$-Scarf by \cref{lem:Restriction-Scarf}. Similarly, $v$ cannot be attached to either of the vertices of $C_3 \subseteq T_{t-3}$ without leaves. Thus, $v$ only shares an edge with the vertex of $C_3 \subseteq T_{t-3}$ to which leaves are attached, that is, $G = T_{t-2}$.
    \end{proof}

\subsection{Adding leaves}\label{sec:adding_leaves}

    To finish the proof of \cref{thm:P_4_Scarf}, we must show that all the graphs listed in the theorem are $\mc P_4$-Scarf. The key is \cref{thm:leaf_lemma}, which allows us to add leaves inductively while preserving the $\mc P_4$-Scarf property. Throughout this section, we will fix the following notation.

    \begin{notation}\label{not:adding_leaves}
        We relabel the polynomial ring $S$ in order to distinguish one variable. Let $S = \Bbbk[x,x_1,\dots,x_d]$ and $S' = S[x']$ where $x'$ is a new variable. In the application of \cref{thm:leaf_lemma}, the distinguished variable $x$ (and its counterpart, $x'$) will represent the outer vertex of a leaf; see \cref{fig:p4_notation}.
        
        Fix the following two ideals $I \subseteq S$ and $I' \subseteq S'$: 
            \begin{enumerate}
                \item Let $I \subseteq S$ be the square-free monomial ideal with minimal generating set
                    \[M = \{m_1,m_2,\dots,m_p,xn_1,xn_2,\dots,xn_q\},\]
                        where $m_i$ and $n_j$ are distinct monomials not divisible by $x$.
                \item Let $I' \subseteq S'$ be the square-free monomial ideal with minimal generating set
                    \[M' = M \cup \{x'n_1,x'n_2,\dots,x'n_q\}.\]
            \end{enumerate}
        We also let $\Gamma = \Scarf(I)$ and $\Gamma' = \Scarf(I')$ denote the Scarf complexes of $I$ and $I'$ respectively.
    \end{notation}

    \begin{figure}[h]
        \centering
        \includegraphics[width=0.35\linewidth]{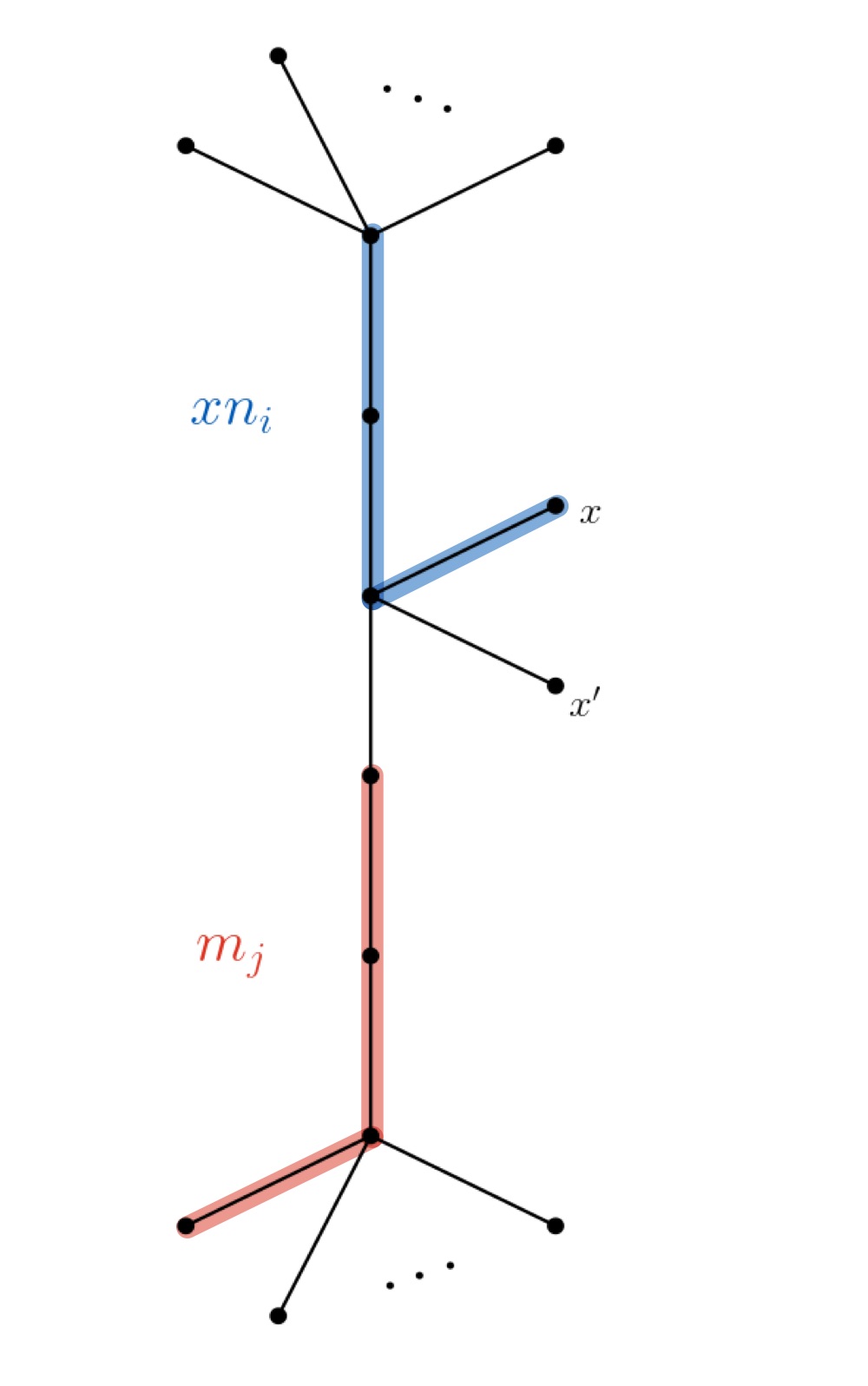}
        \caption{$S_6^{m,2,p}$ with the middle leaves distinguished.}
        \label{fig:p4_notation}
    \end{figure}

    We would like to describe $\Gamma'$ in terms of $\Gamma$. To do this, we need to relate subsets of the generating sets $M$ and subsets of $M'$. Since $M \subseteq M'$, any subset $N \subseteq M$ may also be viewed as a subset of $M'$. Conversely, given a subset $N' \subseteq M'$, we may obtain a subset of $M$ by evaluating at $x' = x$; that is, we let $\overline{N'} \subseteq M$ denote the image of $N'$ under the homomorphism $S' \to S$ which maps $x' \mapsto x$. Note that (by our assumptions in \ref{not:adding_leaves}) $N'$ and $\overline{N'}$ are comprised of square-free monomials. Thus, the lcm of either subset is the square-free product of all variables appearing in at least one of the monomials in the subset. Using this observation, we obtain the following formulas for $\lcm(N')$. 

    \begin{lemma} \label{thm:transfering_Scarf_subsets_lcms}
        The \(\lcm\) of a subset $N' \subseteq M'$ is given by
                \[\lcm(N') = \begin{cases}
                    \lcm(\overline{N'}) & \textit{ if } N' \subseteq M\\[3pt]
                    \frac{\,\, x'}{x}\lcm(\overline{N'}) & \textit{ if } x' \mid \lcm(N') \text{ and } x \nmid \lcm(N')\\[3pt]
                    x'\lcm(\overline{N'}) & \textit{ if } x,x'\mid \lcm(N')
                \end{cases}
                \]\qed
    \end{lemma}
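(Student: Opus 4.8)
The plan is to reduce everything to the square-free ``support product'' description of the lcm recorded in the paragraph just before the statement: since every generator in $M'$ is square-free, $\lcm(N')$ is simply the product of all variables appearing in at least one monomial of $N'$, and likewise $\lcm(\overline{N'})$ is the product of the variables appearing in $\overline{N'}$. The whole lemma then becomes a bookkeeping comparison of which variables survive in the two products, so I would not expect to invoke anything beyond \cref{not:adding_leaves} and this observation.

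First I would isolate how the substitution $x' \mapsto x$ acts on generators. Each element of $M'$ has exactly one of three types: an $m_i$ (involving neither $x$ nor $x'$), an $xn_j$ (involving $x$ but not $x'$), or an $x'n_j$ (involving $x'$ but not $x$, since each $n_j$ is not divisible by $x$ and $x'$ is a new variable). The map $S' \to S$ fixes the first two types and sends $x'n_j \mapsto xn_j$. Hence $\overline{N'}$ is obtained from $N'$ by replacing each $x'n_j$ by $xn_j$, and the two subsets involve exactly the same variables other than $x$ and $x'$. Writing $L$ for the product of those common variables, I would record the immediate facts: $x \mid \lcm(N')$ iff $N'$ contains some $xn_j$, and $x' \mid \lcm(N')$ iff $N'$ contains some $x'n_j$; from the replacement description, $x \mid \lcm(\overline{N'})$ iff $N'$ contains some $xn_j$ \emph{or} some $x'n_j$, while $x'$ never divides $\lcm(\overline{N'})$.

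With these facts in hand each case is a one-line check. The combination ``$x' \nmid \lcm(N')$'' is precisely $N' \subseteq M$ (Case 1), where $\overline{N'} = N'$ and the equality is immediate; the case ``$x' \mid \lcm(N')$, $x \nmid \lcm(N')$'' (Case 2) gives $\lcm(N') = Lx'$ while $\lcm(\overline{N'}) = Lx$, so $\tfrac{x'}{x}\lcm(\overline{N'}) = Lx' = \lcm(N')$; and the case ``$x, x' \mid \lcm(N')$'' (Case 3) gives $\lcm(N') = Lxx'$ while $\lcm(\overline{N'}) = Lx$, so $x'\lcm(\overline{N'}) = \lcm(N')$.

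This argument is essentially mechanical, so the closest thing to an obstacle is simply verifying that the three listed cases are exhaustive and mutually exclusive, i.e. that ``$x' \nmid \lcm(N')$'' really does coincide with ``$N' \subseteq M$''. This holds because the only generators of $M'$ involving $x'$ are the $x'n_j$, so $x'$ appears in $\lcm(N')$ exactly when $N'$ meets $M' \setminus M$; the remaining two cases then exhaust the possibilities according to whether $x$ divides $\lcm(N')$, and Cases 2 and 3 are distinguished precisely by that condition.
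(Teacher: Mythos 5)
Your proof is correct and takes exactly the paper's approach: the paper states this lemma with no written proof, treating it as an immediate consequence of the observation (recorded just before the statement) that the lcm of a set of square-free monomials is the product of the variables appearing in them. Your case analysis simply makes explicit the bookkeeping that the paper leaves to the reader, and it checks out in all three cases.
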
 

    \begin{lemma}\label{thm:transfering_Scarf_subsets} 
        Let $N \subseteq M$ and $N' \subseteq M'$. Then the following hold.
            \begin{enumerate}[label = (\roman*)]
                \item\label{thm:transfering_Scarf_subsets_1}$N$ is a face of $\Gamma$ if and only if $N$ is a face of $\Gamma'$.
                
                \item\label{thm:transfering_Scarf_subsets_2} If $N'$ is a face of $\Gamma'$, then $\overline{N'}$ is a face of $\Gamma$;
                \item\label{thm:transfering_Scarf_subsets_3} If $N'$ is of the form
                    \[N' = \{m_{i_1},\dots,m_{i_a},xn_{j},x'n_{j}\}\]
                for some $1 \leq i_1,\dots,i_a \leq p$, $a \ge 0$, and $1 \leq j \leq q$, then the converse of ~\ref{thm:transfering_Scarf_subsets_2} holds.
            \end{enumerate}
    \end{lemma}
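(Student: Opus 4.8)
The plan is to work directly from the defining property of the Scarf complex: a subset $\sigma$ is a face precisely when its lcm label is \emph{unique}, i.e.\ no other subset of the relevant generating set shares that label. Thus each of the three claims becomes a statement about when two subsets can have the same lcm, and the central tool is \cref{thm:transfering_Scarf_subsets_lcms}, which records how $\lcm(N')$ is controlled by $\lcm(\overline{N'})$ together with the pattern of divisibility by the distinguished variables $x$ and $x'$. Throughout I would exploit the fibers of the map $N' \mapsto \overline{N'}$: the only generator of $M'$ mapping to $m_i$ is $m_i$ itself, while both $xn_j$ and $x'n_j$ map to $xn_j$. Statement \ref{thm:transfering_Scarf_subsets_1} is then almost immediate. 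The key observation is that $N \subseteq M$ forces $x' \nmid \lcm(N)$, so any subset $\tau \subseteq M'$ competing with $N$ for the same label cannot contain an $x'$-generator, whence $\tau \subseteq M$ and, by the first case of \cref{thm:transfering_Scarf_subsets_lcms}, its label is computed inside $S$ exactly as in $\Gamma$. The forward implication says a label unique among subsets of $M$ remains unique after the harmless enlargement to $M'$, and the reverse implication is clear since uniqueness among subsets of $M'$ implies uniqueness among the smaller family of subsets of $M$.

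For \ref{thm:transfering_Scarf_subsets_2} I would argue by contrapositive, producing from a failure of uniqueness for $\overline{N'}$ a failure of uniqueness for $N'$. Concretely, pick $L \subseteq M$ with $L \neq \overline{N'}$ and $\lcm(L) = \lcm(\overline{N'})$, and lift $L$ to a subset $L' \subseteq M'$ matching the $x$/$x'$-divisibility pattern of $N'$, as dictated by the three cases of \cref{thm:transfering_Scarf_subsets_lcms}: when $N' \subseteq M$ take $L' = L$; when $x' \mid \lcm(N')$ but $x \nmid \lcm(N')$, replace every $x$-generator of $L$ by its corresponding $x'$-generator; and when $x, x' \mid \lcm(N')$, adjoin $x'n_j$ for some $x$-generator $xn_j \in L$. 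In every case $\overline{L'} = L \neq \overline{N'}$, so $L' \neq N'$, while the matched divisibility pattern together with \cref{thm:transfering_Scarf_subsets_lcms} gives $\lcm(L') = \lcm(N')$, contradicting that $N'$ is a face of $\Gamma'$.

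For \ref{thm:transfering_Scarf_subsets_3}, the special form yields $\overline{N'} = \{m_{i_1},\dots,m_{i_a},xn_j\}$ and places $N'$ in the last case of \cref{thm:transfering_Scarf_subsets_lcms}, so $\lcm(N') = x'\lcm(\overline{N'})$ with both $x$ and $x'$ dividing it. Assuming $\overline{N'}$ is a face of $\Gamma$, I would take any $L' \subseteq M'$ with $\lcm(L') = \lcm(N')$ and show $L' = N'$. Since $x, x' \mid \lcm(L')$, the subset $L'$ also falls in the last case, so $x'\lcm(\overline{L'}) = x'\lcm(\overline{N'})$ forces $\lcm(\overline{L'}) = \lcm(\overline{N'})$, and then $\overline{L'} = \overline{N'}$ by uniqueness in $\Gamma$. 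The fiber analysis now pins $L'$ down exactly: $L' \subseteq N'$ because every element maps into $\overline{N'}$; each $m_{i_k}$ lies in $L'$ since it is its own unique preimage; and because $xn_j$ is the only available generator divisible by $x$ and $x'n_j$ the only one divisible by $x'$, both must belong to $L'$ to account for $x, x' \mid \lcm(L')$. Hence $L' = N'$.

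I expect the main obstacle to be the fiber/divisibility bookkeeping in \ref{thm:transfering_Scarf_subsets_3}: one must establish both containments $L' \subseteq N'$ and $N' \subseteq L'$, and in particular see why the matched-index form $\{\dots,xn_j,x'n_j\}$ is exactly what makes the converse go through, since for mismatched indices the equality $\overline{L'} = \overline{N'}$ no longer determines the $x$/$x'$ split uniquely. By contrast, the lifting constructions in \ref{thm:transfering_Scarf_subsets_2} are routine once organized by the three cases of \cref{thm:transfering_Scarf_subsets_lcms}, and \ref{thm:transfering_Scarf_subsets_1} is essentially immediate from the absence of $x'$ in $\lcm(N)$.
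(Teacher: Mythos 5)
Your proposal is correct and takes essentially the same approach as the paper: both arguments rest on \cref{thm:transfering_Scarf_subsets_lcms}, lifting competing subsets between $M$ and $M'$, and the same case analysis on divisibility by $x$ and $x'$. The only differences are cosmetic — you prove part \ref{thm:transfering_Scarf_subsets_3} directly rather than by contrapositive (and your fiber bookkeeping there spells out the step the paper compresses into ``any differences persist when we evaluate $x'=x$''), and in the third divisibility case of part \ref{thm:transfering_Scarf_subsets_2} you adjoin a single $x'$-partner where the paper adjoins all of them, both of which work.
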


    \begin{proof}
        In order to establish \ref{thm:transfering_Scarf_subsets_1}, we prove the equivalent statement that $N \not\in \Gamma$ if and only if $N \not\in \Gamma'$. First notice that if $N \not\in \Gamma$, then $N \not\in \Gamma'$ since $M \subseteq M'$. Conversely, assume that $N \not\in \Gamma'$. Then the lcm of $N$ is not unique among subsets of $M'$: that is, there exists a subset $P' \subseteq M'$ such that $P' \neq N$ but $\lcm(N) = \lcm(P')$. We claim that $P'$ must also be a subset of $M$. If not, then $x'n_i \in P'$ for some $1\leq i \leq q$. But then $x' \vert \lcm(P') = \lcm(N)$ which is not possible since $N \subseteq M$. So $P' \subseteq M$ as claimed and therefore $N \not \in \Gamma$.

        To see \ref{thm:transfering_Scarf_subsets_2}, suppose that $N \coloneqq \overline{N'} \not \in \Gamma$. We may assume that $N' \not\subseteq M$ otherwise $N=N'$ and we may apply \ref{thm:transfering_Scarf_subsets_1}. Since $N' \not\subseteq M$, we must have that $x' \vert \lcm(N')$ and therefore $x \vert \lcm(N)$. Since we are assuming that $N \not\in \Gamma$, there exists a subset $P \subseteq M$ of the form
            \[P = \{m_{i_1},\dots,m_{i_a},xn_{j_1},\dots,xn_{j_b}\}\]
        for some $a \geq 0$ and $b>0$ satisfying $P \neq N$ and $\lcm(P) = \lcm(N)$. We now consider two cases. If $x \mid \lcm(N')$, then define a subset $P'$ of $M'$ by 
        \[P' = P \cup \{x'n_{j_1}, \dots, x'n_{j_b}\}.\]
        Notice that $\overline{P'} = P$ and therefore $P' \neq N'$: otherwise, $\overline{P'} = P$ would equal $N$. Using \cref{thm:transfering_Scarf_subsets_lcms}, we have $\lcm(P') = x'\lcm(P) = x'\lcm(N) = \lcm(N')$ implying that $N' \not \in \Gamma'$. On the other hand, if $x \vert \lcm(N')$, then defining
            \[P' = \{m_{i_1}, \dots, m_{i_a}, x'n_{j_1}, \dots, x'n_{j_b}\}\]
        and reasoning as above, we find that $P' \neq N'$ and $\lcm(P') = \lcm(N')$. Hence $N' \not \in \Gamma'$ in this case as well. This completes the proof of \ref{thm:transfering_Scarf_subsets_2}.

        To establish the partial converse of ~\ref{thm:transfering_Scarf_subsets_2} given in ~\ref{thm:transfering_Scarf_subsets_3}, suppose that $N' \not \in \Gamma$ and set $N \coloneqq \overline{N'}$ as before. Then there exists $P' \subseteq M'$ such that $P' \neq N'$ and $\lcm(P') = \lcm(N')$. Notice that both $x$ and $x'$ divide the common lcm of $P'$ and $N'$ so we may apply \cref{thm:transfering_Scarf_subsets_lcms} to see that $\lcm(N) = \lcm(P)$ where $P \coloneqq \overline{P'}$. The explicit form of $N'$ assures that any differences between $P'$ and $N'$ persist when we evaluate $x' = x$. In other words, $P \neq N$, and this implies that $N \not \in \Gamma$ as desired.
    \end{proof}

    \begin{definition}
        Let $\Delta$ be a simplicial complex on a vertex set $V$. Given a face $\sigma \in \Delta$, the \textit{star neighborhood} of $\sigma$ is the subcomplex
            \[\starnbd(\sigma,\Delta) \coloneqq \{\tau \in \Delta : \sigma \cup \tau \in \Delta\}.\]
        If $v \in V$ is a vertex of $\Delta$, we will denote $\starnbd \bigl(\{v\},\Delta \bigr)$ by $\starnbd(v,\Delta)$.

        The \textit{join} of two simplicial complexes $\Delta$ and $\Delta'$, with disjoint vertex sets, is the simplicial complex consisting of all unions $\sigma \cup \sigma'$ such that $\sigma \in \Delta$ and $\sigma' \in \Delta'$. We will only consider the case when $\Delta'$ consists of a single vertex, $v$, distinct from the vertices of $\Delta$. In this case, we will refer to the join of $\Delta$ and $v$ as the \textit{cone over} $\Delta$ \textit{with vertex} $v$, and will denote it as
            \[\cone(v,\Delta) \coloneqq \Delta \cup \bigl\{\tau \cup \{v\} : \tau \in \Delta \bigr\}.\]
    \end{definition}

    \begin{lemma}\label{thm:forbidden_subsets}
        In addition to \ref{not:adding_leaves}, assume that $\starnbd(xn_i,\Gamma) \cap \starnbd(xn_j,\Gamma) = \varnothing$ for all $i\neq j$. Then the subsets $\{xn_i,xn_j\}$, $\{x'n_i,x'n_j\}$, and $\{xn_i,x'n_j\}$ are not faces of $\Gamma'$ for any $i\neq j$.
    \end{lemma}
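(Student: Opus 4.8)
The plan is to reduce all three cases to the single observation that $\{xn_i, xn_j\}$ is not a face of $\Gamma$, and then transport this non-membership to $\Gamma'$ using \cref{thm:transfering_Scarf_subsets}. The content of the lemma turns out to be concentrated entirely in establishing that one fact; the transfer results then do the rest mechanically.

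First I would unpack the star-neighborhood hypothesis. Suppose toward a contradiction that $\{xn_i, xn_j\} \in \Gamma$ for some $i \neq j$. Then the singleton $\{xn_j\}$ is a common (non-empty) face of the two star neighborhoods: it lies in $\starnbd(xn_j, \Gamma)$ because $\{xn_j\} \cup \{xn_j\} = \{xn_j\} \in \Gamma$, and it lies in $\starnbd(xn_i, \Gamma)$ because $\{xn_i\} \cup \{xn_j\} = \{xn_i, xn_j\} \in \Gamma$ by our contradiction hypothesis. This contradicts the disjointness hypothesis $\starnbd(xn_i, \Gamma) \cap \starnbd(xn_j, \Gamma) = \varnothing$. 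Hence $\{xn_i, xn_j\} \notin \Gamma$ for all $i \neq j$.

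With this in hand, the three cases are immediate. The set $\{xn_i, xn_j\}$ is contained in $M$, so part \ref{thm:transfering_Scarf_subsets_1} of \cref{thm:transfering_Scarf_subsets} says it is a face of $\Gamma'$ if and only if it is a face of $\Gamma$; since it is not, it is not a face of $\Gamma'$. For the remaining two sets I would compute the evaluation at $x' = x$: both $\overline{\{x'n_i, x'n_j\}}$ and $\overline{\{xn_i, x'n_j\}}$ equal $\{xn_i, xn_j\}$, and this has two distinct elements exactly because $i \neq j$ (the $n_1, \dots, n_q$ are distinct). Since $\{xn_i, xn_j\} \notin \Gamma$, the contrapositive of part \ref{thm:transfering_Scarf_subsets_2} of \cref{thm:transfering_Scarf_subsets} shows that neither $\{x'n_i, x'n_j\}$ nor $\{xn_i, x'n_j\}$ is a face of $\Gamma'$.

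The only genuinely delicate point is the reading of the hypothesis: strictly speaking both star neighborhoods contain the empty face, so the equality ``$= \varnothing$'' should be understood as the two subcomplexes sharing no common \emph{non-empty} face (equivalently, no common vertex). The argument above uses precisely this, applied to the non-empty face $\{xn_j\}$. Beyond this small bookkeeping, I expect no obstacle, since everything else is a direct invocation of \cref{thm:transfering_Scarf_subsets}.
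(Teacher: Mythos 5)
Your proof is correct and follows essentially the same route as the paper's: establish $\{xn_i,xn_j\}\notin\Gamma$ from the disjointness hypothesis, then apply \cref{thm:transfering_Scarf_subsets}\ref{thm:transfering_Scarf_subsets_1} for the first subset and the contrapositive of \cref{thm:transfering_Scarf_subsets}\ref{thm:transfering_Scarf_subsets_2} for the other two. Your two elaborations---exhibiting $\{xn_j\}$ as a common vertex of the two star neighborhoods, and noting that the hypothesis ``$=\varnothing$'' must be read as sharing no non-empty face (since both subcomplexes contain the empty face)---are details the paper leaves implicit, and both are accurate.
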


    \begin{proof}
        Fix $1\leq i,j \leq q$ with $i\neq j$. By \cref{thm:transfering_Scarf_subsets}\ref{thm:transfering_Scarf_subsets_1}, we have $\{xn_i,xn_j\} \in \Gamma'$ if and only if $\{xn_i,xn_j\} \in \Gamma$. However, $\{xn_i,xn_j\}$ cannot be an element of $\Gamma$ since this would contradict the assumption that $\starnbd(xn_i,\Gamma) \cap \starnbd(xn_j,\Gamma) = \varnothing$. By \cref{thm:transfering_Scarf_subsets}\ref{thm:transfering_Scarf_subsets_2}, it then follows that subsets of the form $\{x'n_i,x'n_j\}$ or $\{xn_i,x'n_j\}$ cannot be faces of $\Gamma'$ either. 
    \end{proof}

    \begin{lemma}\label{thm:leaf_lemma}        
        In addition to \ref{not:adding_leaves}, assume that $\starnbd(xn_i,\Gamma) \cap \starnbd(xn_j,\Gamma) = \varnothing$ for all $i\neq j$. Then the following hold.
        \begin{enumerate}[label = (\roman*)]
            \item\label{thm:leaf_lemma_1} The simplicial complex $\Gamma'$ is obtained by replacing $\starnbd(xn_j,\Gamma)$ with \\$\cone\bigl(x'n_j, \starnbd(xn_j,\Gamma)\bigr)$ for each $1\leq j \leq q$.
            
            \item\label{thm:leaf_lemma_2} For each $1\leq j \leq q$, $\starnbd(xn_j,\Gamma') = \cone\bigl(x'n_j,\starnbd(xn_j,\Gamma)\bigr)$. In particular, $\starnbd(xn_i,\Gamma') \cap \starnbd(xn_j,\Gamma') = \varnothing$ for all $i \neq j$.
            
            \item\label{thm:leaf_lemma_3} If $I$ is Scarf, then $I'$ is Scarf.
        \end{enumerate}
    \end{lemma}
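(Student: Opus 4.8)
The plan is to establish the combinatorial descriptions \ref{thm:leaf_lemma_1} and \ref{thm:leaf_lemma_2} directly from the transfer results, and then deduce the Scarf property \ref{thm:leaf_lemma_3} from \cref{lem:BPS} using the fact that cones are acyclic. First I would analyze the faces of $\Gamma'$. By \cref{thm:forbidden_subsets}, no face of $\Gamma'$ contains two distinct generators from $\{xn_i,x'n_i\}$ and $\{xn_j,x'n_j\}$ with $i \neq j$; hence every face of $\Gamma'$ is either contained in $\{m_1,\dots,m_p\}$ or involves exactly one index $j$ through a nonempty subset of $\{xn_j,x'n_j\}$. By \cref{thm:transfering_Scarf_subsets}\ref{thm:transfering_Scarf_subsets_1}, the faces of $\Gamma'$ contained in $M$ are exactly the faces of $\Gamma$, while \cref{thm:transfering_Scarf_subsets}\ref{thm:transfering_Scarf_subsets_2} and \ref{thm:transfering_Scarf_subsets_3} identify the faces containing a primed generator: such a face $\{m_{i_1},\dots,m_{i_a},x'n_j\}$ (resp. $\{m_{i_1},\dots,m_{i_a},xn_j,x'n_j\}$) lies in $\Gamma'$ precisely when $\{m_{i_1},\dots,m_{i_a},xn_j\}$ lies in $\Gamma$, i.e. precisely when the face $\tau$ obtained by deleting $x'n_j$ lies in $\starnbd(xn_j,\Gamma)$. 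This says exactly that the new faces of $\Gamma'$ are the cone faces $\tau \cup \{x'n_j\}$ with $\tau \in \starnbd(xn_j,\Gamma)$, which is the content of \ref{thm:leaf_lemma_1}. Part \ref{thm:leaf_lemma_2} follows by rerunning the same bookkeeping after adjoining $xn_j$: a face $\sigma$ lies in $\starnbd(xn_j,\Gamma')$ iff $\sigma \cup \{xn_j\} \in \Gamma'$, and the description above shows this holds iff $\sigma$ lies in $\starnbd(xn_j,\Gamma)$ or is a cone face over it. The ``in particular'' statement then follows because a common face of $\starnbd(xn_i,\Gamma')$ and $\starnbd(xn_j,\Gamma')$ can contain no primed generator, so it must already be a common face of $\starnbd(xn_i,\Gamma)$ and $\starnbd(xn_j,\Gamma)$; by hypothesis there are none, so the disjointness is inherited and the lemma may be applied repeatedly.

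For \ref{thm:leaf_lemma_3} I would invoke \cref{lem:BPS}: it suffices to show that $\Gamma'_{\leq m'}$ is empty or acyclic over $\Bbbk$ for every monomial $m'$ of $S'$. Write $\overline{m'}$ for the image of $m'$ under $x' \mapsto x$. The argument splits into three cases according to divisibility of $m'$ by $x$ and $x'$. If $x' \nmid m'$, then no face with a primed generator survives and $\Gamma'_{\leq m'} = \Gamma_{\leq m'}$, which is empty or acyclic since $I$ is Scarf. If $x' \mid m'$ but $x \nmid m'$, then no face containing some $xn_j$ survives, and the substitution $x'n_j \mapsto xn_j$ (together with \cref{thm:transfering_Scarf_subsets_lcms}) yields a simplicial isomorphism $\Gamma'_{\leq m'} \cong \Gamma_{\leq \overline{m'}}$; again this is empty or acyclic.

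The remaining case $x, x' \mid m'$ is where the real work lies. Here $xn_j \mid m' \iff x'n_j \mid m' \iff n_j \mid m'$, so setting $\Delta = \Gamma_{\leq \overline{m'}}$ one checks, exactly as in \ref{thm:leaf_lemma_1} but with every lcm condition cut down by $m'$, that $\Gamma'_{\leq m'}$ is obtained from $\Delta$ by coning $\starnbd(xn_j,\Delta)$ with $x'n_j$ for each $j$ with $n_j \mid m'$. I would then compute homology by adjoining these cones one at a time via reduced Mayer--Vietoris. The key point, and the main obstacle, is controlling the homology of the pieces: each added cone $\cone(x'n_j,\starnbd(xn_j,\Delta))$ is contractible, and the intersection of the enlarged complex with this cone is exactly $\starnbd(xn_j,\Delta)$ (the disjointness of stars prevents any interference between different indices). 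Crucially, because $xn_j$ is a vertex of $\Delta$ in this case, its closed star $\starnbd(xn_j,\Delta)$ is itself a cone with apex $xn_j$, hence acyclic. With both the glued cone and the gluing locus acyclic, each Mayer--Vietoris step gives $\tilde H_*(\text{before}) \cong \tilde H_*(\text{after})$, so $\tilde H_*(\Gamma'_{\leq m'}) \cong \tilde H_*(\Delta)$. Since $I$ is Scarf, $\Delta = \Gamma_{\leq \overline{m'}}$ is empty or acyclic, and a short check shows $\Gamma'_{\leq m'}$ is empty exactly when $\Delta$ is. Hence $\Gamma'_{\leq m'}$ is empty or acyclic in every case, and by \cref{lem:BPS} the ideal $I'$ is Scarf.
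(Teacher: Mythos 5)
Your proposal is correct and follows essentially the same route as the paper: part \ref{thm:leaf_lemma_1} via \cref{thm:forbidden_subsets} and the transfer lemma, part \ref{thm:leaf_lemma_2} as an immediate consequence, and part \ref{thm:leaf_lemma_3} by the same three-case split on divisibility by $x$ and $x'$, with the mixed case handled by applying the coning description to the restricted complexes and then using Mayer--Vietoris, where both the cone and the closed star (being a cone with apex $xn_j$) are acyclic. The only cosmetic difference is that you iterate Mayer--Vietoris cone-by-cone while the paper isolates the same computation as a single stated observation.
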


    \begin{proof}
        Since $\Gamma'$ is closed under taking subsets, any subset of $M'$ which contains one of the subsets listed in \cref{thm:forbidden_subsets} cannot be a face of $\Gamma'$. Therefore, the only subsets of $M'$ which could be faces of $\Gamma'$ are of the form
        \begin{enumerate}
            \item $\{m_{i_1},m_{i_2},\dots,m_{i_s}\}$, $s\ge 0$;
            \item $\{m_{i_1},m_{i_2},\dots,m_{i_s}, xn_j\}$, $s \ge 0, 1\leq j \leq q$;
            \item $\{m_{i_1},m_{i_2},\dots,m_{i_s}, x'n_j\}$, $s \ge 0, 1\leq j \leq q$;
            \item $\{m_{i_1},m_{i_2},\dots,m_{i_s}, xn_j, x'n_j\}$, $s \ge 0, 1\leq j \leq q$.
        \end{enumerate}
            
        Subsets of $M'$ of the form $(1)$ or $(2)$ are also subsets of $M$ and therefore are faces of $\Gamma'$ if and only if they are faces of $\Gamma$ by \cref{thm:transfering_Scarf_subsets}\ref{thm:transfering_Scarf_subsets_1}. 
        
        Let $N' = \{m_{i_1},\dots,m_{i_s},x'n_j\}$ be a face of $\Gamma'$ of the form $(3)$. Then $\overline{N'} \in \Gamma$ by \cref{thm:transfering_Scarf_subsets}\ref{thm:transfering_Scarf_subsets_2}. Thus,
            \[\overline{N'} = \{m_{i_1},\dots,m_{i_s}\} \cup \{xn_j\} \in \Gamma,\]
        which implies that $\{m_{i_1},\dots,m_{i_s}\} \in \starnbd(xn_j,\Gamma)$. Hence $N' = \tau \cup \{x'n_j\}$ for $\tau \in \starnbd(xn_j,\Gamma)$, that is, $N' \in \cone \bigl(x'n_j,\starnbd(xn_j,\Gamma)\bigr)$. Similarly, if $N'$ is of the form $(4)$, then \cref{thm:transfering_Scarf_subsets}\ref{thm:transfering_Scarf_subsets_3} implies that $N' \in \cone\bigl(x'n_j,\starnbd(xn_j,\Gamma)\bigr)$ as well.

        Thus, each face of $\Gamma'$ is either a face of $\Gamma$ or of the form $\tau \cup \{x'n_j\}$ for some face $\tau \in \starnbd(xn_j,\Gamma)$.
        Conversely, for \(\tau \in \starnbd(xn_j,\Gamma)\), any face of the form 
        \(\tau \cup \{x'n_j\}\) is in \(\Gamma'\). Together, this establishes \ref{thm:leaf_lemma_1}.

        Statement \ref{thm:leaf_lemma_2} follows from the description of \(\Gamma'\) in terms of \(\Gamma\) given in \ref{thm:leaf_lemma_1}. The equality \(\starnbd(xn_j,\Gamma') = \cone\bigl(x'n_j,\starnbd(xn_j,\Gamma)\bigr)\) is immediate. In particular, since the neighborhoods \(\starnbd(xn_j,\Gamma)\) in
        \(\Gamma\) are disjoint, the neighborhoods \(\starnbd(xn_j,\Gamma')\) in
        \(\Gamma'\) are disjoint. 

        To show the final claim \ref{thm:leaf_lemma_3}, we make the following observation: suppose $\Delta$ is a simplicial complex, $v \in \Delta$ is a vertex, and $v' \not\in \Delta$ is a ``new'' vertex distinct from those in $\Delta$. Set $C = \cone\bigl(v',\starnbd(v,\Delta)\bigr)$ and $\Delta' = \Delta \cup C$. Since $C$ and $\starnbd(v,\Delta)$ are both contractible, the Mayer–Vietoris sequence implies that $\Delta'$ and $\Delta$ have the same reduced homology over $\Bbbk$.

        Suppose that $I$ is Scarf and let $m$ be a monomial. We need to show that $\Gamma'_{\leq m}$ is acyclic or empty. There are three cases to consider:
        \begin{enumerate}[label = (\alph*)]
            \item $x' \!\!\! \not\vert m$
            \item $x' \vert m$ but $x \!\!\! \not\vert m$
            \item $x' \vert m$ and $x \vert m$.
        \end{enumerate}
        In the first case, $\Gamma'_{\leq m} = \Gamma_{\leq m}$. In the second case, the roles of $x$ and $x'$ can be interchanged to deduce that $\Gamma'_{\leq m}$ and $\Gamma_{\leq \frac{mx}{x'}}$ are isomorphic. In the third case, notice that $\Gamma'$ can be written as
            \[
                \Gamma' = \bigcup_{i=1}^q \cone\bigl(x'n_i,\starnbd(xn_i,\Gamma)\bigr) 
                \cup \Gamma.
            \]
        Assuming that both $x$ and $x'$ divide $m$, we claim that
        \begin{equation}\label{eqn:iii_equality}
            \Gamma'_{\leq m} = \bigcup_{i}
            \cone\bigl(x'n_i,\starnbd(xn_i,\Gamma_{\leq \frac{m}{\; x'}})\bigr)
            \cup \Gamma_{\leq \frac{m}{\; x'}},
        \end{equation}
        where the union is taken over all $1 \leq i \leq q$ such that $xn_i \in \Gamma_{\frac{m}{\; x'}}$. 
        
        Consider the ideals \(I^{\leq \frac{m}{\; x'}}\) and \((I')^{\leq m}\), where \(I^{\leq \frac{m}{\; x'}}\) (respectively, \((I')^{\leq m}\)) is the ideal generated by the minimal generators of $I$ which divide $m/x'$ (respectively, of $I'$ which divide $m$). These ideals satisfy the setup of \ref{not:adding_leaves} and, by definition, we have 
            \[
                \Scarf \bigl(I^{\leq \frac{m}{\; x'}} \bigr) = \Gamma_{\leq \frac{m}{\; x'}} 
                \text{ and } 
                \Scarf\bigl((I')^{\leq m}\bigr) = \Gamma'_{\leq m}.
            \] 
        Thus, we may apply \ref{thm:leaf_lemma_1} to \(I^{\leq \frac{m}{\; x'}}\) and \((I')^{\leq m}\) to obtain the equality \eqref{eqn:iii_equality}.

        Since $I$ is Scarf, $\Gamma_{\leq \frac{m}{\; x'}}$ is either acyclic or empty. Combining \eqref{eqn:iii_equality} with the Mayer-Vietoris sequence
        implies that the same is true of $\Gamma'_{\leq m}$.
    \end{proof}

    The next example illustrates the main ideas of \cref{thm:leaf_lemma} and contains the base cases for the induction needed to complete the proof of \cref{thm:P_4_Scarf}.

    \begin{example}\label{ex:base_cases}
        Consider the graph $G = S_5^{1,1,1}$ depicted below. Set $I = \mc P_4(G)$ and $\Gamma = \Scarf(I)$. Then $\Gamma$ consists of two 2-simplices connected by an edge. 

        \begin{figure}[h]
            \centering
            
            \begin{subfigure}[c]{0.49\textwidth}
                \centering
                \includegraphics[width=\textwidth]{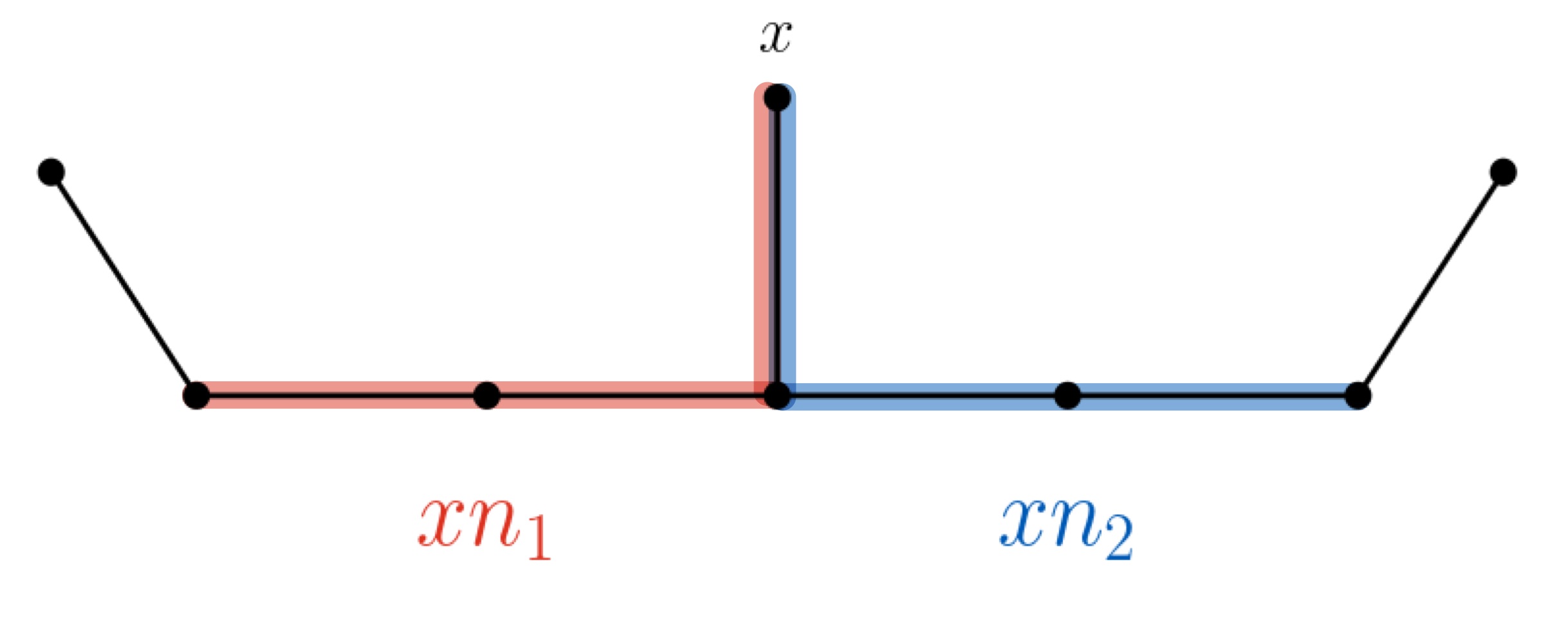}
            \end{subfigure}
            \hfill
            \begin{subfigure}[c]{0.49\textwidth}
                \centering
                \includegraphics[width=\textwidth]{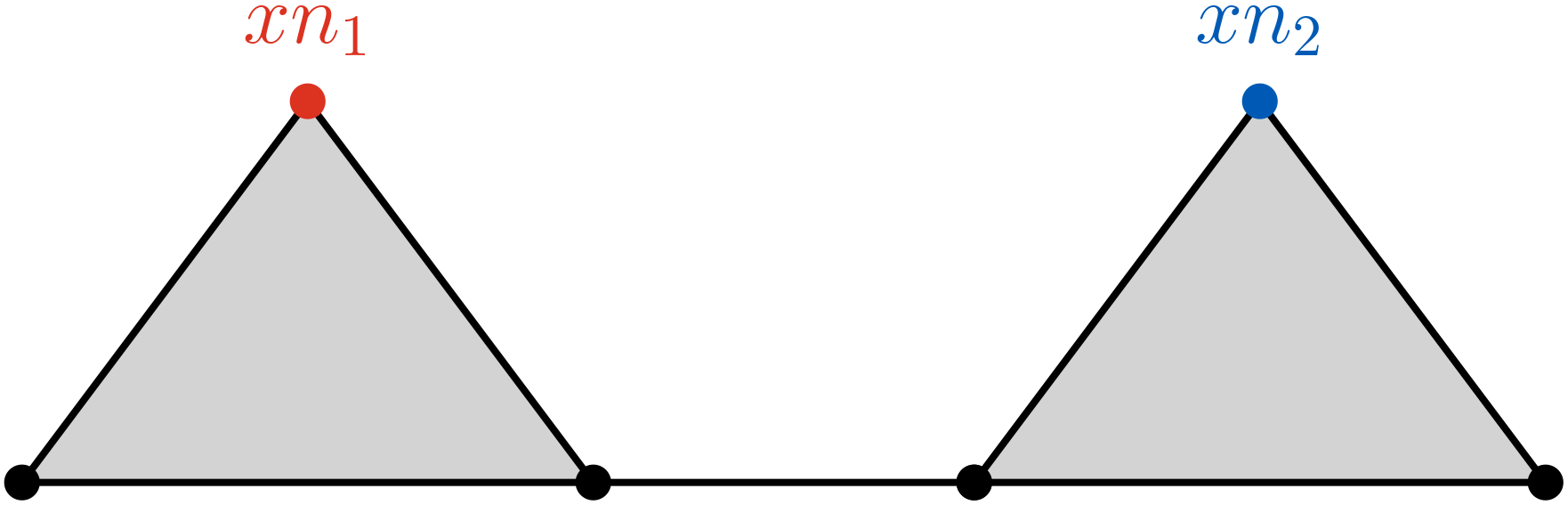}
            \end{subfigure}
            
            \caption{The graph $S_5^{1,1,1}$ and its Scarf complex.}
            \label{fig:S5_base_case}
        \end{figure}

        A straightforward computation shows that the complex $\Gamma$ has the following property: for any monomial $m$, the subcomplex $\Gamma_{\leq m}$ is either empty or contractible. Thus, $I$ is Scarf by \cref{lem:BPS}. In particular, this conclusion does not depend on the base field $\Bbbk$.

        Let $x$ denote the outer vertex of the middle leaf of $G$ as shown above. There are two paths of length four extending from $x$; using the notation of \ref{not:adding_leaves}, they are of the form $xn_1$ and $xn_2$ for some monomials $n_1,n_2$ which are not divisible by $x$. The vertices of $\Gamma$ corresponding to $xn_1$ and $xn_2$ are also labeled above in \cref{fig:S5_base_case}. Notice that $\starnbd(xn_1,\Gamma) \cap \starnbd(xn_2,\Gamma) = \varnothing$.

        To illustrate the main point of \cref{thm:leaf_lemma}, we add a new leaf at the middle vertex; this forms the graph $G' = S_5^{1,2,1}$. We label the outer vertex of the new leaf $x'$ depicted below. The resulting Scarf complex $\Gamma' = \Scarf(I')$, where $I' = \mc P_4(G')$, is also shown in \cref{fig:S_5^{1,2,1}}. It follows from \cref{thm:leaf_lemma} that $I'$ is also Scarf. Also, observe that $\starnbd(xn_1,\Gamma') \cap \starnbd(xn_2,\Gamma') = \varnothing$, as indicated in the lemma.

        \begin{figure}[h]
            \centering

            \begin{subfigure}[b]{0.49\textwidth}
                \centering
                \includegraphics[width=\textwidth]{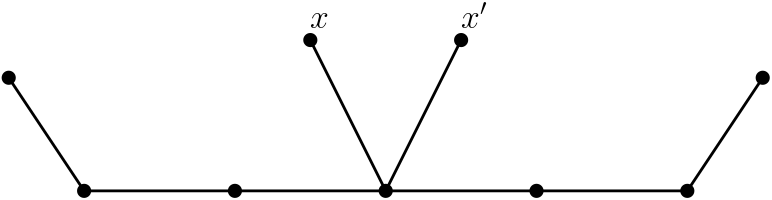}
            \end{subfigure}
            \hfill
            \begin{subfigure}[b]{0.49\textwidth}
                \centering
                \includegraphics[width=\textwidth]{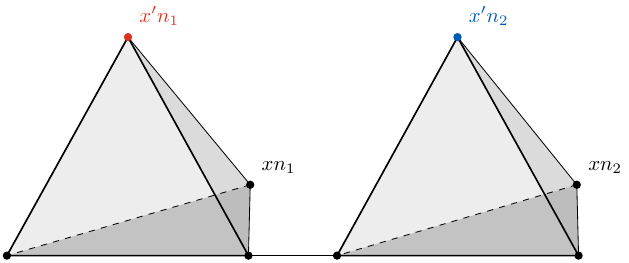}
            \end{subfigure}

            \caption{The graph $S_5^{1,2,1}$ and its Scarf complex.}
            \label{fig:S_5^{1,2,1}}
        \end{figure}

        We also consider the Scarf complex of $\mc P_4(G)$ where $G = S_6^{1,1,1}$. As in the above computation for $S_5^{1,1,1}$, the ideal $\mc P_4(G)$ is Scarf (independently of the base field $\Bbbk$) and the star neighborhoods of its Scarf complex satisfy the relevant intersection properties. \cref{thm:leaf_lemma} can be applied here as well to establish the $\mc P_4$-Scarf property of, for example, $G = S_6^{1,2,1}$; the relevant graphs and complexes are displayed in \cref{fig:S_6_base_case}.

        \begin{figure}[h]
            \centering

            \begin{subfigure}[b]{0.565\textwidth}
                \centering
                \includegraphics[width=\textwidth]{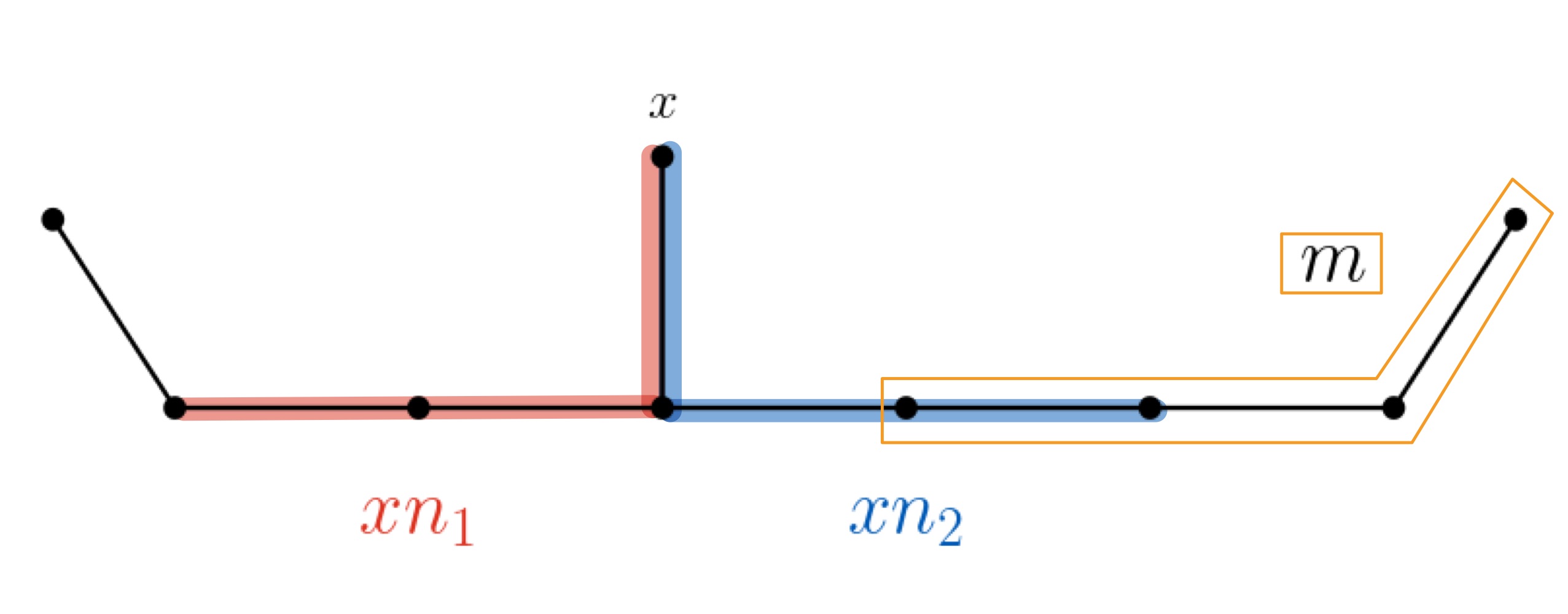}
            \end{subfigure}
            \hfill
            \begin{subfigure}[b]{0.415\textwidth}
                \centering
                \includegraphics[width=\textwidth]{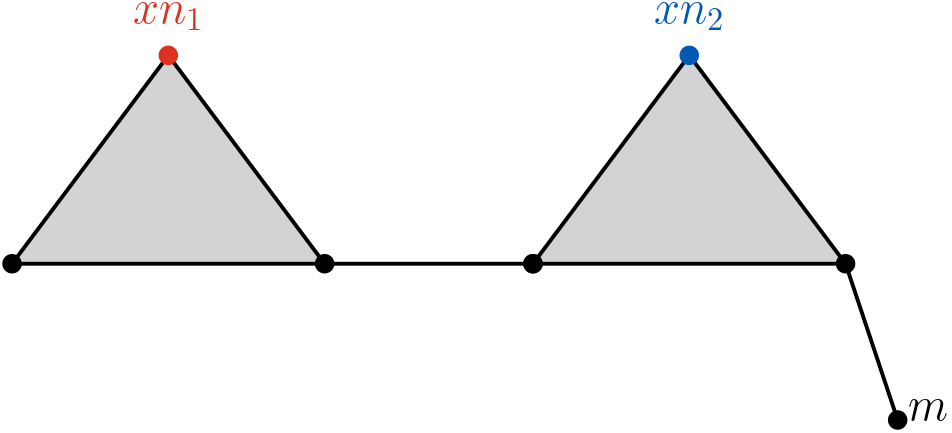}
            \end{subfigure}

            \vspace{5pt}
   
            \begin{subfigure}[c]{0.535\textwidth}
                \centering
                \includegraphics[width=\textwidth]{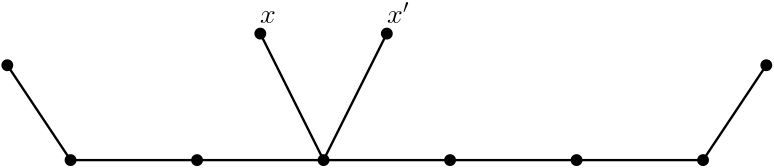}
            \end{subfigure}
            \hfill
            \begin{subfigure}[c]{0.445\textwidth}
                \centering
                \includegraphics[width=\textwidth]{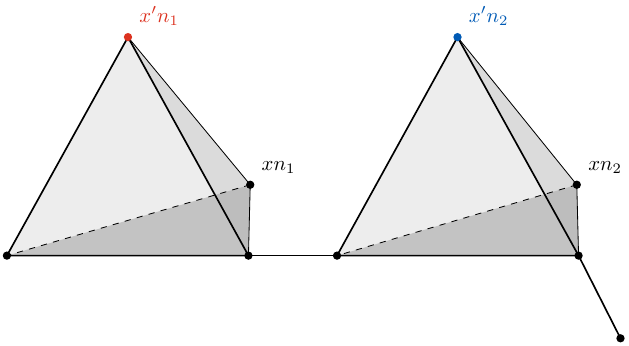}
            \end{subfigure}

                        \caption{The graphs $S_6^{1,1,1}$ and $S_6^{1,2,1}$ and their Scarf complexes.}
            \label{fig:S_6_base_case}
        \end{figure}
    \end{example}

    \begin{proof}[Proof of \cref{thm:P_4_Scarf} (``if'' direction)]
        To complete this direction of the proof, we must show that each of the graphs listed in the theorem are $\mc P_4$-Scarf. First, note that for any $k \ge 0$, the $4$-path ideal of $S_k$ is the zero ideal, and hence trivially Scarf. Also, the $4$-path ideal of $T_k$ and $S_3^{m,n}$ are minimally resolved by their Taylor resolutions and therefore are also Scarf. We also have $S_4^{m,n} \subseteq S_6^{0,m,n}$, so by \cref{lem:Restriction-Scarf}, it suffices to show that $S_k^{m,n,p}$ is Scarf for $k=5$ and $k=6$. Again by \cref{lem:Restriction-Scarf}, we in fact only need to show this for $m, n, p > 0$. 
        
        Our strategy is to consider appropriate base cases, then inductively add leaves to the specified vertices and apply \cref{thm:leaf_lemma} to show that the resulting graphs are indeed $\mc P_4$-Scarf. The base cases are $S_5^{1,1,1}$ and $S_6^{1,1,1}$, and the relevant computations for these graphs were shown above in \cref{ex:base_cases}. We now proceed by adding leaves inductively at each of the three specified vertices. First, we focus on the middle vertex. As we saw in \cref{ex:base_cases}, if we take $x$ to be the middle leaf of $S_k^{1,1,1}$ and $I = \mc P_4\bigl(S_k^{1,1,1}\bigr)$, then the disjointness hypothesis of \cref{thm:leaf_lemma} is satisfied, and hence $I' = \mc P_4\bigl(S_k^{1,2,1}\bigr)$ is Scarf. Moreover, part \ref{thm:leaf_lemma_2} of the lemma guarantees that $I'$ also satisfies the disjointness hypothesis. Continuing inductively, it follows that $S_k^{1,n,1}$ is $\mc P_4$-Scarf for all $n \ge 1$ and $k = 5,6$: given that $\mc P_4\bigl(S_k^{1,n,1}\bigr)$ is Scarf and satisfies the disjointness condition, we may apply ~\ref{thm:leaf_lemma} to see that the same is true for $\mc P_4\bigl(S_k^{1,n+1,1}\bigr)$.

        For the second induction, where we induct on $m$, the base cases are $S_k^{1,n,1}$, $k=5,6$. Here, we take $x$ to be the ``top" leaf of $S_k^{1,n,1}$ (see \cref{fig:S_graphs}). Noticing that the relevant star neighborhoods in this case are vacuously disjoint (there is only one path on four vertices containing $x$), we have that $S_k^{m,n,1}$ is $\mc P_4$-Scarf for all $m,n\ge 1$ and $k=5,6$. The third and final induction follows similarly, using $S_k^{m,n,1}$, $k=5,6$, as base cases.
    \end{proof}
    
    As a final remark, we mention that the patterns we have observed for $4$-path ideals do not seem to extend in a straightforward way for $t > 4$. For instance, for $t=5$, the subgraphs that prevent the $\mc P_5$-Scarf seem to be substantially more complicated than the ones given in \cref{fig:forbidden_subgraphs}. Indeed, each of the graphs in \cref{fig:forbidden_P_5} are not $\mc P_5$-Scarf, and we suspect that this is the complete list of obstructions for $t=5$.

    \begin{figure}[htbp!]
        \centering
        
        \begin{subfigure}[b]{0.24\textwidth}
            \centering
            \includegraphics[width=\textwidth]{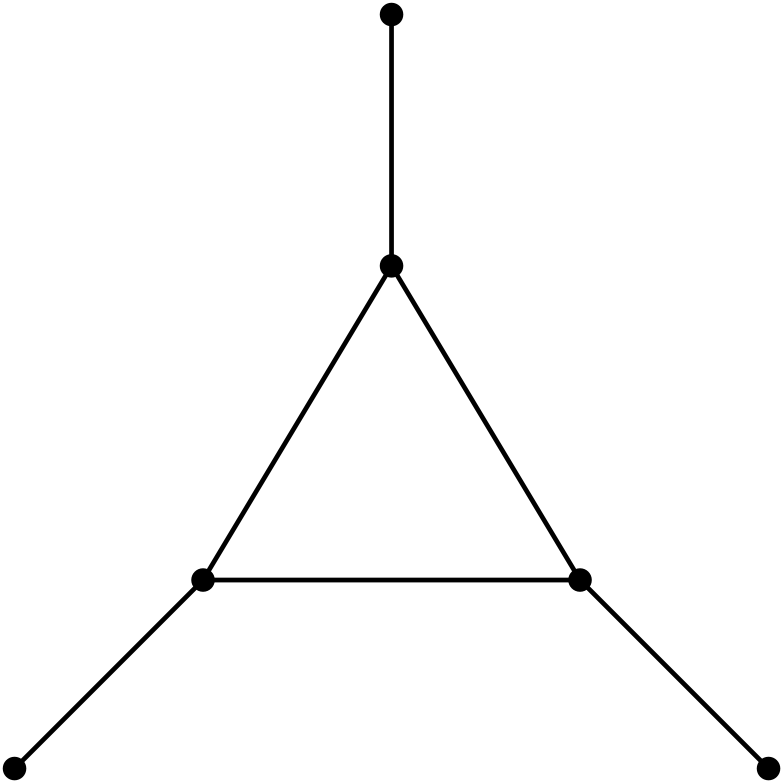}
        \end{subfigure}
        \hfill
        \begin{subfigure}[b]{0.19\textwidth}
            \centering
            \includegraphics[width=\textwidth]{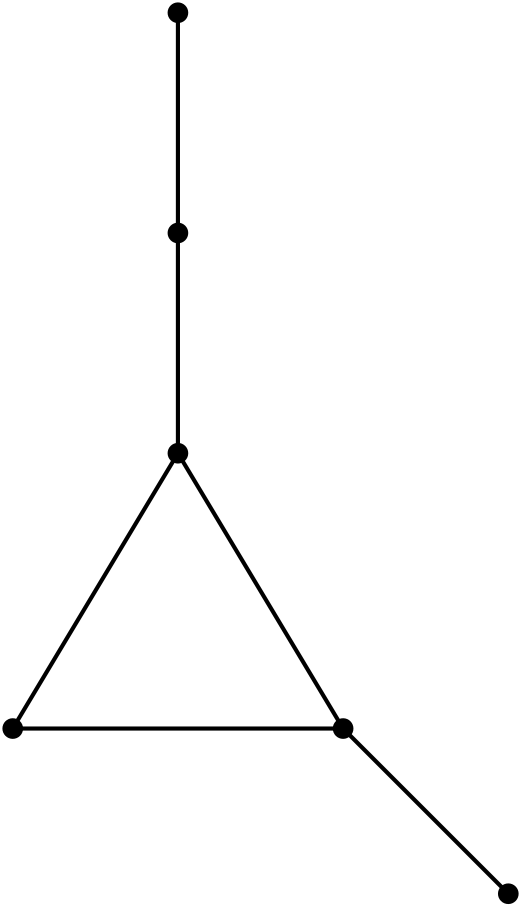}
        \end{subfigure}
        \hfill
        \begin{subfigure}[b]{0.21\textwidth}
            \centering
            \includegraphics[width=0.6\textwidth]{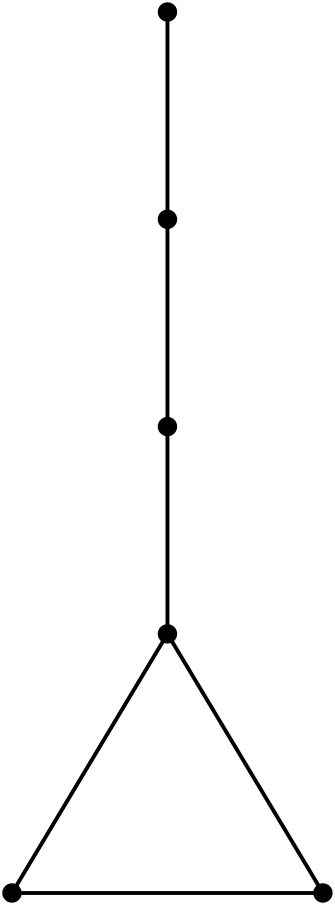}
        \end{subfigure}
        \hfill
        \begin{subfigure}[b]{0.215\textwidth}
            \centering
            \includegraphics[width=\textwidth]{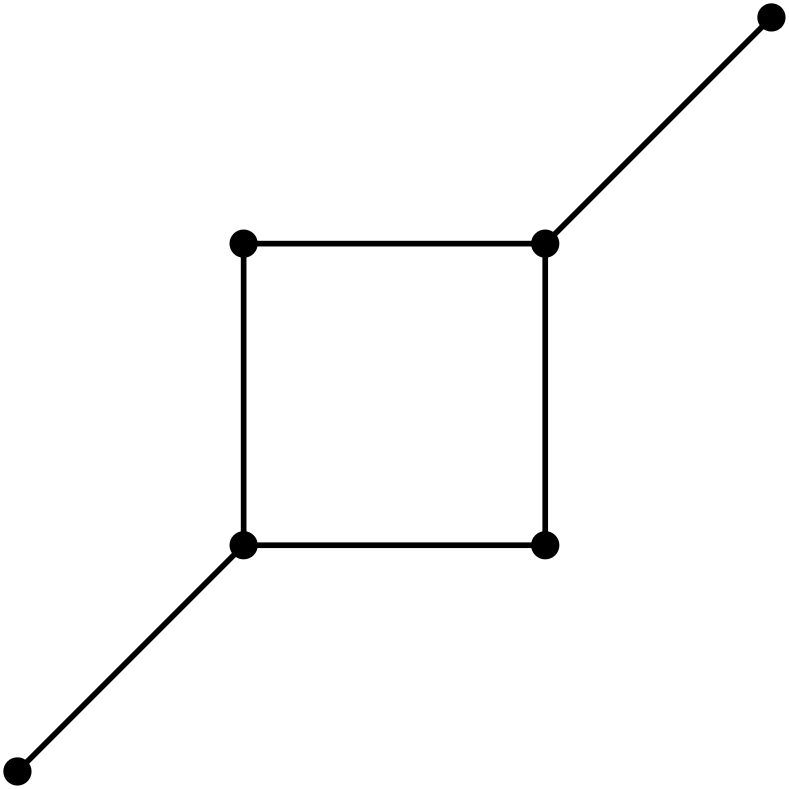}
        \end{subfigure}
        
        \vspace{10pt}
        
        \begin{subfigure}[b]{0.24\textwidth}
            \centering
            \includegraphics[width=\textwidth]{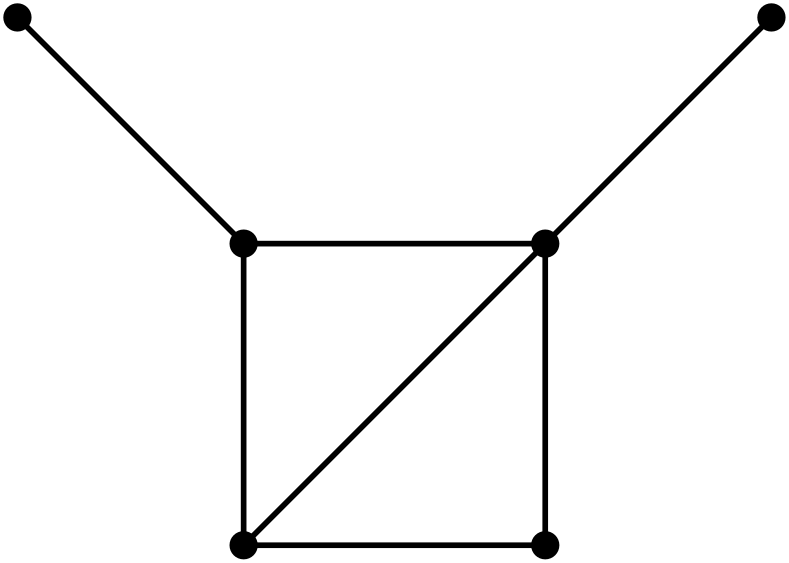}
        \end{subfigure}
        \hfill
        \begin{subfigure}[b]{0.19\textwidth}
            \centering
            \includegraphics[width=0.85\textwidth]{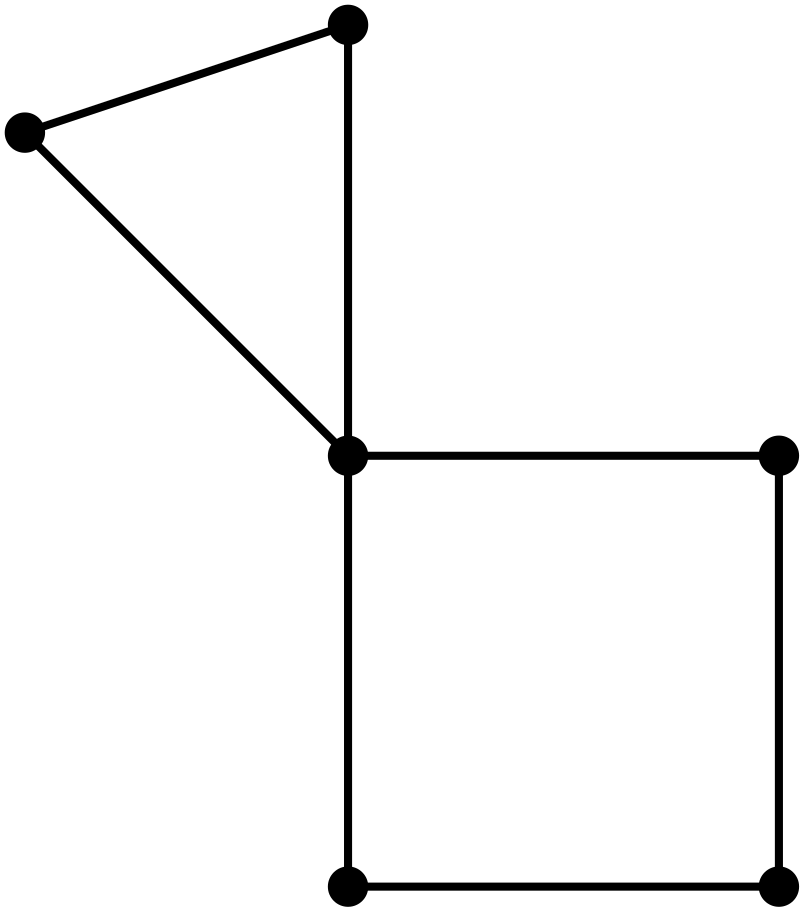}
        \end{subfigure}
        \hfill
        \begin{subfigure}[b]{0.21\textwidth}
            \centering
            \includegraphics[width=0.5\textwidth]{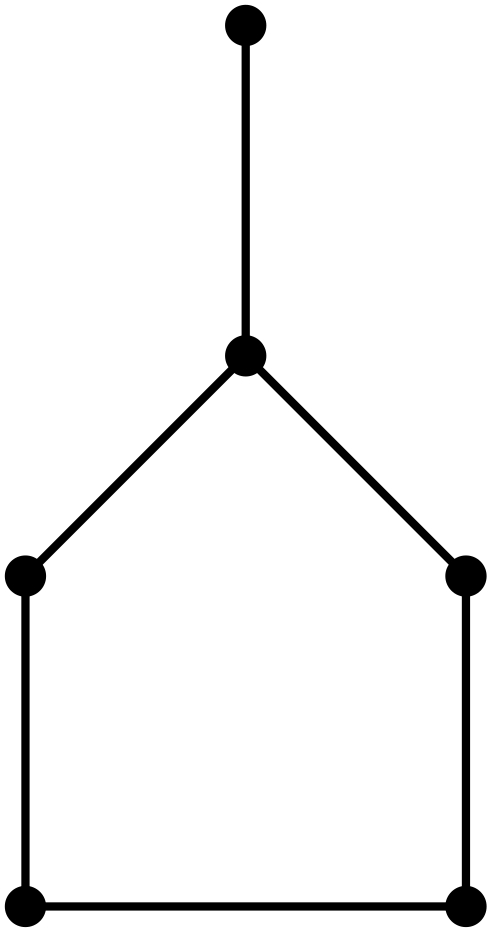}
        \end{subfigure}
        \hfill
        \begin{subfigure}[b]{0.215\textwidth}
            \centering
            \includegraphics[width=0.8\textwidth]{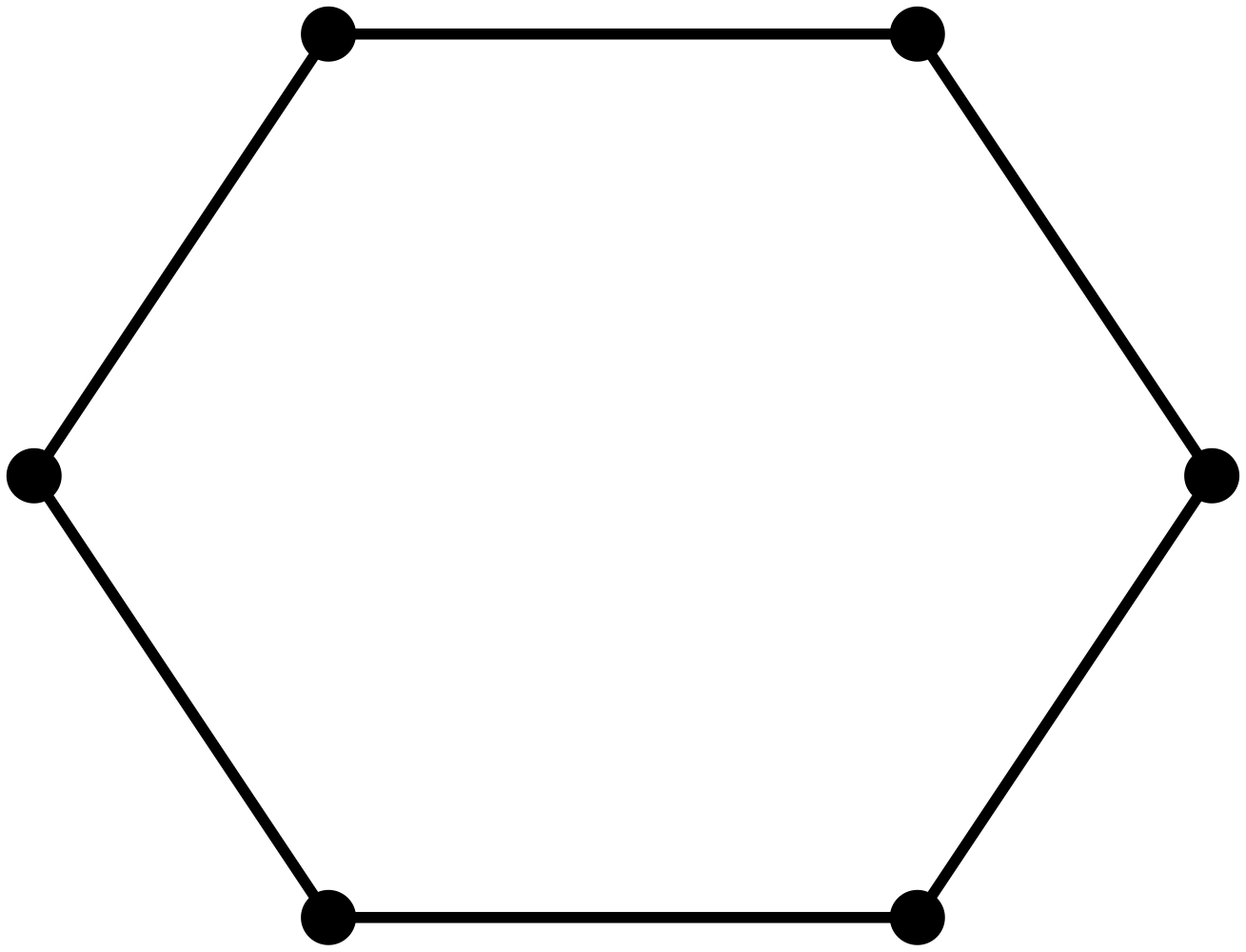}
        \end{subfigure}

        \caption{Graphs that are not \(\mc P_5\)-Scarf.}
        \label{fig:forbidden_P_5}
    \end{figure}
    
    In practice, the problem of finding forbidden structures that prevent the $\mc P_t$-Scarf property boils down to counting paths of length $t$ in a graph on $t+1$; this is due to \cref{lem:scarf-2-gens}. The difficulty of this problem is likely a symptom of its similarity to the challenging problem of finding Hamiltonian paths.

\bibliographystyle{amsalpha}
\bibliography{references}

@article{AlilooeeFaridi2015,
    author = {Ali Alilooee and Sara Faridi},
    doi = {10.1080/00927872.2013.837170},
    issn = {15324125},
    issue = {12},
    journal = {Communications in Algebra},
    keywords = {Betti numbers,Path ideals,Projective dimension,Regularity,Runs},
    month = {12},
    pages = {5413--5433},
    publisher = {Bellwether Publishing, Ltd.},
    title = {On the Resolution of Path Ideals of Cycles},
    volume = {43},
    year = {2015}
}

@article {AlilooeeFaridi2018,
    AUTHOR = {Alilooee, A. and Faridi, S.},
    TITLE = {Graded {B}etti numbers of path ideals of cycles and lines},
    JOURNAL = {Journal of Algebra and its Applications},
    VOLUME = {17},
    YEAR = {2018},
    NUMBER = {1},
    PAGES = {1850011, 17},
    ISSN = {0219-4988,1793-6829},
    DOI = {10.1142/S0219498818500111}
}

@article {AJM25,
    AUTHOR = {Ananthnarayan, H. and Javadekar, Omkar and Maithani, Aryaman},
    TITLE = {Linear quotients of connected ideals of graphs},
    JOURNAL = {Journal of Algebraic Combinatorics},
    VOLUME = {61},
    YEAR = {2025},
    NUMBER = {3},
    PAGES = {Paper No. 34, 15},
    ISSN = {0925-9899,1572-9192},
    DOI = {10.1007/s10801-025-01395-6}
}

@article {Banerjee2017,
    AUTHOR = {Banerjee, Arindam},
    TITLE = {Regularity of path ideals of gap free graphs},
    JOURNAL = {Journal of Pure and Applied Algebra},
    VOLUME = {221},
    YEAR = {2017},
    NUMBER = {10},
    PAGES = {2409--2419},
    ISSN = {0022-4049},
    DOI = {10.1016/j.jpaa.2016.12.005}
}

@article {MR4245096,
    AUTHOR = {Barile, Margherita and Macchia, Antonio},
    TITLE = {Minimal cellular resolutions of the edge ideals of forests},
    JOURNAL = {Electronic Journal of Combinatorics},
    VOLUME = {27},
    YEAR = {2020},
    NUMBER = {2},
    PAGES = {Paper No. 2.41, 16},
    ISSN = {1077-8926},
    DOI = {10.37236/8810}
}

@article{bayer1998monomial,
    title={MONOMIAL RESOLUTIONS},
    author={Bayer, Dave and Peeva, Irena and Sturmfels, Bernd},
    journal={Mathematical Research Letters},
    volume={5},
    pages={31--46},
    year={1998}
}

@article{chau2025realizing,
    title={Realizing resolutions of powers of extremal ideals},
    author={Chau, Trung and Duval, Art M and Faridi, Sara and Holleben, Thiago and Morey, Susan and {\c{S}}ega, Liana M},
    journal={arXiv preprint arXiv:2502.09585},
    year={2025}
}

@article {ConcaDeNegri1998,
    AUTHOR = {Conca, Aldo and De Negri, Emanuela},
    TITLE = {{$M$}-sequences, graph ideals, and ladder ideals of linear
    type},
    JOURNAL = {J. Algebra},
    FJOURNAL = {Journal of Algebra},
    VOLUME = {211},
    YEAR = {1999},
    NUMBER = {2},
    PAGES = {599--624},
    ISSN = {0021-8693,1090-266X},
    MRCLASS = {13C40 (13H10 13P10)},
    MRNUMBER = {1666661},
    MRREVIEWER = {Rafael\ H.\ Villarreal},
    DOI = {10.1006/jabr.1998.7740},
    URL = {https://doi.org/10.1006/jabr.1998.7740}
}

@article{Chau19082025,
    author = {Trung Chau and Nursel Erey and Aryaman Maithani},
    title = {The Scarf complex of squarefree powers, symbolic powers of edge ideals, and cover ideals of graphs},
    journal = {Communications in Algebra},
    volume = {0},
    number = {0},
    pages = {1--20},
    year = {2025},
    publisher = {Taylor \& Francis},
    doi = {10.1080/00927872.2025.2544296}
}

@article {CK24,
    AUTHOR = {Chau, Trung and Kara, Selvi},
    TITLE = {Barile-{M}acchia resolutions},
    JOURNAL = {Journal of Algebraic Combinatorics. An International Journal},
    VOLUME = {59},
    YEAR = {2024},
    NUMBER = {2},
    PAGES = {413--472},
    ISSN = {0925-9899,1572-9192},
    DOI = {10.1007/s10801-023-01293-9}
}

@article {MR2457403,
    AUTHOR = {Corso, Alberto and Nagel, Uwe},
    TITLE = {Monomial and toric ideals associated to {F}errers graphs},
    JOURNAL = {Transactions of the American Mathematical Society},
    VOLUME = {361},
    YEAR = {2009},
    NUMBER = {3},
    PAGES = {1371--1395},
    ISSN = {0002-9947,1088-6850},
    DOI = {10.1090/S0002-9947-08-04636-9}
}

@article {MR2875826,
    AUTHOR = {Dochtermann, Anton and Engstr\"om, Alexander},
    TITLE = {Cellular resolutions of cointerval ideals},
    JOURNAL = {Mathematische Zeitschrift},
    VOLUME = {270},
    YEAR = {2012},
    NUMBER = {1-2},
    PAGES = {145--163},
    ISSN = {0025-5874, 1432-1823},
    DOI = {10.1007/s00209-010-0789-z}
}

@article {DRSV24,
    AUTHOR = {Deshpande, Priyavrat and Roy, Amit and Singh, Anurag and Van Tuyl, Adam},
    TITLE = {Fr\"oberg's theorem, vertex splittability and higher independence complexes},
    JOURNAL = {Journal of Commutative Algebra},
    VOLUME = {16},
    YEAR = {2024},
    NUMBER = {4},
    PAGES = {391--410},
    ISSN = {1939-080, 1939-2346},
    DOI = {10.1216/jca.2024.16.391}
}

@incollection {Erey2020,
    AUTHOR = {Erey, Nursel},
    TITLE = {Multigraded {B}etti numbers of some path ideals},
    BOOKTITLE = {Combinatorial structures in algebra and geometry},
    SERIES = {Springer Proc. Math. Stat.},
    VOLUME = {331},
    PAGES = {51--65},
    PUBLISHER = {Springer, Cham},
    YEAR = {2020},
    ISBN = {978-3-030-52111-0; 978-3-030-52110-3},
    DOI = {10.1007/978-3-030-52111-0_5}
}

@article{faridi2024scarf,
    title={Scarf complexes of graphs and their powers},
    author={Faridi, Sara and H{\`a}, T{\`a}i Huy and Hibi, Takayuki and Morey, Susan},
    journal={arXiv preprint arXiv:2403.05439},
    year={2024}
}

@article{HHZ04,
    AUTHOR = {Herzog, J\"urgen and Hibi, Takayuki and Zheng, Xinxian},
    TITLE = {Dirac’s theorem on chordal graphs and {A}lexander duality},
    JOURNAL = {European Journal of Combinatorics},
    VOLUME = {25},
    YEAR = {2004},
    PAGES = {949--960}
}

@article{KianiMadani2016,
    author = {Dariush Kiani and Sara Saeedi Madani},
    doi = {10.1080/00927872.2016.1172597},
    issn = {15324125},
    issue = {12},
    journal = {Communications in Algebra},
    keywords = {Betti numbers,Path ideals,t-strongly disjoint set of bouquets},
    month = {12},
    pages = {5376--5394},
    publisher = {Taylor and Francis Inc.},
    title = {Betti Numbers of Path Ideals of Trees},
    volume = {44},
    year = {2016}
}

@phdthesis{Tay66,
    title={Ideals generated by monomials in an {R}-sequence},
    author={Taylor, Diana Kahn},
    year={1966},
    school={University of Chicago, Department of Mathematics}
}

\end{document}